\theoremstyle{definition}
\newtheorem{definition}{Definition}[section]
\newtheorem{example}[definition]{Example}
\theoremstyle{plain}
\newtheorem{theorem}[definition]{Theorem}
\newtheorem{lemma}[definition]{Lemma}
\newtheorem{proposition}[definition]{Proposition}
\newtheorem{corollary}[definition]{Corollary}
\newtheorem{remark}[definition]{Remark}
\def\CC{{\mathbb C}}
\def\NN{{\mathbb N}}
\def\PP{{\mathbb P}}
\def\RR{{\mathbb R}}
\def\ZZ{{\mathbb Z}}
\renewcommand{\phi}{\varphi}
\def\ones{\mathbbm{1}}
\newcommand{\inv}{^{-1}}
\newcommand{\dl}{\partial}
\newcommand{\til}{\widetilde}
\def\conv{\operatorname {conv}}
\DeclareMathOperator{\Int}{Int}
\DeclareMathOperator{\rank}{rank}
\DeclareMathOperator{\sign}{sign}
\def\supp{\operatorname {supp}}
\DeclareMathOperator{\diag}{diag}
\def\spanset{\operatorname {span}}
\def\lbbrace{\{\!\{}
\def\rbbrace{\}\!\}}
\renewcommand\vec{\mathbf}
\title{Lorentzian Symmetric Polynomials}
\author[1]{Tracy Chin}
\author[2]{Daniel Qin}
\affil[1]{University of Washington}
\affil[2]{University of California, Davis}
\date{\today}
\begin{document}

\maketitle
\begin{abstract}
We study the class of Lorentzian symmetric polynomials and Lorentzian symmetric functions, which are defined to be symmetric functions for which every truncation of variables is Lorentzian. Similar to the space of Lorentzian polynomials, we show that the space of Lorentzian symmetric polynomials is homeomorphic to a closed Euclidean ball. Our main result is a reduction scheme that significantly reduces the complexity of testing for Lorentzianity. Using this method, we provide explicit semialgebraic descriptions of the spaces of Lorentzian symmetric polynomials and functions for degrees up to six. These techniques can also be applied to simplify the proofs to known cases of Lorentzian symmetric functions. We conclude by showing that some natural symmetric operators fail to preserve Lorentzianity which in turn highlights an inherent tension between symmetry in variables and the Lorentzian property.
\end{abstract}
\section{Introduction}
Lorentzian polynomials have been an extremely active area of research over the past few years. They have been a particularly useful tool in proving conjectures about (ultra) log-concavity of sequences, as they provide a link between the continuous log-concavity of a polynomial as a function on $\RR^n_+$ and the discrete log-concavity of its coefficients.

The emergence of Lorentzian polynomials traces back to the independent works of \citeauthor*{anari2024masons3} \cite{anari2024masons3} and \citeauthor*{branden2019hodgeriemannrelationspottsmodel} \cite{branden2019hodgeriemannrelationspottsmodel}, who simultaneously used the theory to prove Mason's ultra log-concavity conjecture on the number of independent sets of a matroid. Since then, Lorentzian polynomials have been used in obtaining results towards long-standing log-concavity conjectures. For example, in \cite{hafner2025alexanderpolynomialspecialalternating}, \citeauthor*{hafner2025alexanderpolynomialspecialalternating} used Lorentzian polynomials to prove a special case of Fox's conjecture, and in \cite{alexanderssonjal2025rookmatroids}, \citeauthor{alexanderssonjal2025rookmatroids} used them to prove the log-concavity consequence of the Neggers-Stanley conjecture for naturally labeled width two posets.

Additionally, the support of a Lorentzian polynomial is always an $M$-convex set, meaning that the monomials appearing in a Lorentzian polynomial are always the integer points of some generalized permutahedron. This fact was used, for example, to show that the mixed volume polynomial of any collection of convex bodies has $M$-convex support \cite[Corollary 4.2]{BrandenHuh20Lorentzian}.

In this paper, we explore the intersection of Lorentzian and symmetric polynomials, with an eye towards establishing useful techniques and general theory. One major paper in this intersection is \cite{huh2022schurlorentzian}, in which \citeauthor*{huh2022schurlorentzian} show that Schur polynomials are denormalized Lorentzian, and, as a corollary, show that Kostka numbers are log concave in root directions. Their proof relies heavily on tools from algebraic geometry rather than combinatorics. A second noteworthy result in the same paper is that weight multiciplicities of shifted Verma modules over $\mathfrak{sl}_{n+1}(\CC)$ are log-concave. More recently \citeauthor*{khare2025logconcavitycharactersparabolicverma} extended the latter result to the class of parabolic Verma modules \cite{khare2025logconcavitycharactersparabolicverma}.

A large part of this paper is devoted to showing that symmetric polynomials are particularly amenable to using combinatorial techniques to test for Lorentzianity. While testing Lorentzianity for a fixed degree is always polynomial in the number of variables \cite{chin2024realstabilitylogconcavity}, for symmetric polynomials, the number of arithmetic operations to check the Lorentzian property is independent of the number of variables (see \cref{thm:const-time}). In \cref{sec:general-theory}, we give general results on the space of Lorentzian symmetric polynomials. In \cref{subsec:euclidean-ball}, we study the topology of the space of Lorentzian symmetric polynomials, and specifically show that it is homeomorphic to a closed Euclidean ball (\cref{thm:proj sym lor=ball}). Then, in \cref{subsec:reductions}, we give structural results on Lorentzian symmetric polynomials, including describing the structures that enable constant time algorithms for checking Lorentzianity. In \cref{sec:small degree}, we then use these structures to give explicit semialgebraic descriptions of the space of Lorentzian symmetric polynomials for small degrees.

In \cref{sec:simplifying-results}, we apply our techniques to simplify the proofs of two known results on Lorentzian symmetric polynomials. First, we tackle the result of \citeauthor*{Matherne24chromaticsymmetric} showing that chromatic symmetric polynomials of certain graphs are Lorentzian; in \cref{subsec:chromatic-symmetric}, we use our techniques to simplify the computations in their proof considerably. We also improve on the work of the second author in \cite{qinthesis}, in which he uses combinatorial techniques to show that Schur polynomials of two-column tableaux are denormalized Lorentzian. In \cref{subsec:two-column-schur}, we use our techniques to give a simpler proof, again combinatorial in nature, of this special case.

Finally, in \cref{sec:non-results}, we explore the ways in which symmetric polynomials and the Lorentzian property may be incompatible. Specifically, we show that two natural operations on symmetric polynomials, the $\omega$-involution and Hall inner product, do not preserve Lorentzianity.

\paragraph{Acknowledgements.} We would like to thank Aryaman Jal and Cynthia Vinzant for their helpful comments. Tracy Chin was partially supported by NSF grant DMS-2153746. Daniel Qin was partially supported by NSF grant CCF-2317280. Work on this project began at the eighth edition of Encuentro Colombiano de Combinatoria (ECCO) in Popay{\'a}n, Colombia. Daniel Qin would also like to thank Petter Brändén for introducing him to Lorentzian polynomials, and Melissa Zhang for financial support to attend ECCO.

\section{Background and Notation}~\label{sec:background}
We begin by fixing some notation and recalling the definitions of the objects under study. We write $\RR_{\geq 0}[x_1,\dots,x_n]_d$ to denote the set of all homogeneous polynomials of degree $d$ on $n$ variables with nonnegative coefficients. Throughout this paper, we use the shorthand $\dl_i f := \frac{\dl}{\dl x_i} f$ and similarly $\dl_i^k f := \frac{\dl^k}{\dl x_i^k}f$. Additionally, for $\alpha = (\alpha_1, \dots, \alpha_n) \in \ZZ_{\geq 0}^n$, we use the shorthand $\dl^\alpha f := \dl_1 ^{\alpha_1} \dl_2^{\alpha_2} \dots \dl_n^{\alpha_n} f$, and we define $|\alpha| := \sum_{i=1}^n \alpha_i$.

\subsection{Lorentzian Polynomials}
Lorentzian polynomials were initially introduced simultaneously and independently in \cite{BrandenHuh20Lorentzian} and \cite{anari2024masons3} under different, but equivalent, definitions. In \cite{anari2024masons3}, they are called \emph{completely log-concave polynomials} and are defined in terms of a log-concavity condition on directional derivatives.

In \cite[Theorem 2.30]{BrandenHuh20Lorentzian}, \citeauthor*{BrandenHuh20Lorentzian} show that this is equivalent to their definition of Lorentzian polynomials. By \cite[Theorem 3.2]{anari2024masons3}, both definitions are also equivalent to \cref{def:lorentzian}, which is the definition we use in this paper.

\begin{definition}\label{def:lorentzian}
    Let $f\in \RR_{\geq 0}[x_1,\ldots,x_n]_d$ be a homogeneous polynomial of degree $d$. We say $f$ is \emph{Lorentzian} if both of the following hold.
    \begin{enumerate}[(i)]
        \item[(S)] For all $\alpha \in \ZZ_{\geq 0}^n$, $|\alpha| \leq d-2$, $\dl^\alpha f$ is indecomposable (i.e. it cannot be written as the sum of two nonzero polynomials on disjoint sets of variable).
        \item[(H)] For all $\alpha \in \ZZ_{\geq 0}^n$, $|\alpha| = d-2$, $\nabla^2 \dl^\alpha f$ has at most one positive eigenvalue.
    \end{enumerate}
    We write $L^d_n$ to denote the set of all Lorentzian polynomials of degree $d$ on $n$ variables.
\end{definition}

We also work with denormalized Lorentzian polynomials. The normalization operator takes generating functions to exponential generating functions. The interactions between this operation and the Lorentzian property was first explored in \cite{BrandenHuh20Lorentzian}, and further expanded in \cite{huh2022schurlorentzian,branden2023lowerbounds}.
\begin{definition}
    Given a polynomial $f = \sum_{\alpha \in \ZZ_{\geq 0}^n} c_\alpha x^\alpha$, we define its \emph{normalization} as
    \[N(f) := \sum_{\alpha \in \ZZ_{\geq 0}^n} c_\alpha \frac{x^{\alpha}}{\alpha!},\]
    where $\alpha! := \prod_{i=1}^n \alpha_i!$. We say $f$ is \emph{denormalized Lorentzian} if $N(f)$ is Lorentzian.
\end{definition}
We will also find the following result useful for analyzing the signatures of matrices, though it is purely linear algebra and already known as folklore. For example, a similar statement for strictly positive matrices appears in \cite[Theorem 4.4.6]{bapat1997nonnegativematrices}. However, for the sake of completeness, we reproduce it here. 
\begin{proposition}~\label{prop:signature-from-minors}
    Let $A \in \RR^{n\times n}_{\geq 0}$ be a symmetric matrix with nonnegative entries. Then $A$ has at most one positive eigenvalue if and only if for all nonempty subsets $S \subseteq [n]$, $(-1)^{|S| - 1}A_S \geq 0$, where $A_S = \det(A[S])$ denotes the principal minor with rows and columns indexed by $S$.
\end{proposition}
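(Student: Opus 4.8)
The plan is to prove both directions by relating the number of positive eigenvalues of a nonnegative symmetric matrix to the signs of its principal minors, using the Perron–Frobenius theory to control the largest eigenvalue. First I would reduce to the case where $A$ is irreducible (in the sense that its underlying graph is connected): if $A$ is block-diagonal after a permutation, both the eigenvalue count and the minor conditions decompose over the blocks in a compatible way, and "at most one positive eigenvalue total" forces all but one block to be negative semidefinite. So assume $A$ is irreducible with nonnegative entries.

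For the forward direction, suppose $A$ has at most one positive eigenvalue. Fix a nonempty $S \subseteq [n]$ and consider the principal submatrix $A[S]$, which is again symmetric with nonnegative entries. By Cauchy interlacing, $A[S]$ also has at most one positive eigenvalue. By Perron–Frobenius applied to $A[S]$ (or rather to each of its irreducible diagonal blocks), the spectral radius $\rho(A[S])$ is itself an eigenvalue, so $A[S]$ has at least one nonnegative eigenvalue; combined with the interlacing bound, all eigenvalues of $A[S]$ are $\le 0$ except possibly the top one, which is $\rho(A[S]) \ge 0$. Writing $|S| = k$ and ordering the eigenvalues $\mu_1 \ge \mu_2 \ge \cdots \ge \mu_k$ with $\mu_2, \dots, \mu_k \le 0$ and $\mu_1 \ge 0$, we get $A_S = \det(A[S]) = \mu_1 \prod_{j=2}^k \mu_j$, and the sign of $\prod_{j=2}^k \mu_j$ is $(-1)^{k-1}$ up to a nonnegative factor, so $(-1)^{|S|-1} A_S = (-1)^{k-1}\mu_1 \prod_{j=2}^k \mu_j \ge 0$, as claimed.

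For the converse, suppose $(-1)^{|S|-1} A_S \ge 0$ for all nonempty $S$. I want to show $A$ has at most one positive eigenvalue, equivalently that $-A$ has at least $n-1$ nonnegative eigenvalues, equivalently (by a standard characterization of matrices with exactly one nonnegative eigenvalue via sign patterns of leading minors along a flag) that the eigenvalues $\mu_1 \ge \cdots \ge \mu_n$ of $A$ satisfy $\mu_2 \le 0$. The key input is again Perron–Frobenius: $\mu_1 = \rho(A) \ge 0$ and the Perron eigenvector is entrywise positive, so $\mu_1$ is a simple eigenvalue and it is the unique eigenvalue whose eigenvector can be taken nonnegative. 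Now suppose for contradiction that $\mu_2 > 0$. Then by continuity, for small $t > 0$ the perturbed matrix $A + t(J - I)$ (where $J$ is all-ones) is still a nonnegative symmetric matrix with $\mu_2 > 0$, and one can arrange it to have strictly positive off-diagonal entries; for such strictly positive matrices the result is the cited \cite[Theorem 4.4.6]{bapat1997nonnegativematrices}, and it gives a strict sign violation $(-1)^{|S|-1} A'_S < 0$ for some $S$, which persists for the unperturbed $A$ by taking $t \to 0$ — except that the inequality only degrades to $\le$ in the limit, so I instead run the argument directly. Concretely, I induct on $n$: the hypothesis restricted to subsets of $[n-1]$ shows $A[[n-1]]$ has at most one positive eigenvalue, so $\mu_2(A) \le \mu_1(A[[n-1]])$; if $\mu_2(A) > 0$ then $\det A = \mu_1 \cdots \mu_n$ has sign contradicting $(-1)^{n-1}\det A \ge 0$ unless some $\mu_j = 0$ for $j \ge 2$, and the zero-eigenvalue boundary case is handled by a further small perturbation $A + \epsilon I$ together with the observation that adding $\epsilon I$ only increases each minor's "positivity budget" in the right direction.

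The main obstacle I anticipate is the converse direction, and specifically handling the boundary cases cleanly — matrices that are reducible, or that have eigenvalue exactly $0$, or where a principal minor vanishes — since the clean statements of Perron–Frobenius and of the folklore characterization are cleanest for irreducible or strictly positive matrices. I expect the cleanest write-up to factor through two reductions (to irreducible $A$, then to strictly positive $A$ by the perturbation $A + t(J-I)$), invoke the cited strictly-positive case as a black box, and then take a limit, being careful that the closed conditions $(-1)^{|S|-1}A_S \ge 0$ are preserved under limits so that no strictness is needed in the final statement.
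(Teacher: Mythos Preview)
Your forward direction is correct and essentially matches the paper's, though the paper avoids Perron--Frobenius by the simpler observation that $\ones^T A[S]\,\ones > 0$ whenever $A[S]$ is nonzero with nonnegative entries, which already forces a positive eigenvalue.

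The converse has a genuine gap in the boundary case. Your proposed fix, perturbing by $A + \epsilon I$, does \emph{not} preserve the hypothesis $(-1)^{|S|-1}A_S \ge 0$: already for $|S|=2$ the condition reads $a_{ii}a_{jj} \le a_{ij}^2$, and adding $\epsilon$ to both diagonal entries can destroy this (e.g.\ $A = \bigl(\begin{smallmatrix}1&1\\1&1\end{smallmatrix}\bigr)$ satisfies every minor condition with equality, but $A+\epsilon I$ has $2\times 2$ determinant $2\epsilon + \epsilon^2 > 0$). So you cannot hand the degenerate case off to a perturbed matrix satisfying the same hypotheses. Your other perturbation $A + t(J-I)$ has the opposite problem, as you yourself note: taking $t\to 0$ degrades any strict sign violation to a weak inequality, which is no contradiction. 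The reduction to irreducible $A$ at the outset is also unnecessary and does not help with this issue.

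The paper handles the degenerate case without any perturbation. Once you know (as your induction sketch essentially establishes, via interlacing against an $(n-1)\times(n-1)$ principal submatrix) that $A$ has at most two positive eigenvalues and that $\det A = 0$ when two are positive, set $r = \operatorname{rank}(A) < n$; then $\lambda_1(A),\lambda_2(A) > 0$ together with $\lambda_3(A)\le 0$ forces $\lambda_3(A) = \cdots = \lambda_{n-r+2}(A) = 0$. Because $A$ is symmetric, there is an invertible $r\times r$ principal submatrix $B = A[S]$. The induction hypothesis applied to subsets of $S$ gives $\lambda_2(B) \le 0$, while Cauchy interlacing gives $\lambda_2(B) \ge \lambda_{n-r+2}(A) = 0$; hence $\lambda_2(B) = 0$, contradicting the invertibility of $B$. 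This rank argument cleanly replaces the perturbation step you were reaching for.
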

\begin{proof}
    \textbf{(\contour{black}{$\Rightarrow$})} Let $A \in \RR^{n\times n}_{\geq 0}$ be a symmetric matrix and assume that $A$ has at most one positive eigenvalue. By Cauchy interlacing, for any $S \subseteq [n]$, $A[S]$ also has at most one positive eigenvalue. If $A[S]$ is not invertible, then $(-1)^{|S|-1}A_S = 0$, and the desired inequality holds. Otherwise, we have $A_S \neq 0$, and in particular $A[S]$ is not the zero matrix. Since $A$ has nonnegative entries, this implies that $\ones^T A[S] \ones > 0$, so $A[S]$ has at least one positive eigenvalue. Therefore, $A[S]$ has exactly one positive eigenvalue, and since we assumed it was invertible, it must have exactly $|S|-1$ negative eigenvalues. Thus, $\sign(A_S) = (-1)^{|S|-1}$, so $(-1)^{|S|-1}A_S \geq 0$, as desired.

    \textbf{(\contour{black}{$\Leftarrow$})} We proceed by induction. As a base case, let $n = 2$. By assumption, we have $\det(A) \leq 0$. If $\det(A) < 0$, then $A$ must have exactly one positive and one negative eigenvalue, as desired. If $\det(A) = 0$, then $\lambda = 0$ must be an eigenvalue of $A$. Since $A$ only has two eigenvalues, then it can have at most one positive eigenvalue, as desired.

    Now, suppose the result holds for $k \times k$ matrices for all $2 \leq k < n$, and let $A$ be an $n\times n$ symmetric matrix satisfying the assumptions. By induction, we know that all $(n-1)\times(n-1)$ principal submatrices of $A$ have at most one positive eigenvalue, so by Cauchy interlacing, $A$ has at most two positive eigenvalues. Suppose for the sake of contradiction that $A$ has two positive eigenvalues. This implies that $(-1)^{n-2}\det(A) \geq 0$, but by assumption we have $(-1)^{n-1}\det(A) \geq 0$, so we must have $\det(A) = 0$. Therefore $\rank(A) = r < n$, and, writing $\lambda_1 \geq \dots \geq \lambda_n$ for the eigenvalues of $A$, we have
    \[\lambda_3(A) = \dots = \lambda_{n-r+2}(A) = 0.\]

    Since $A$ is symmetric, $\rank(A)$ is equal to the maximum size of an invertible principal submatrix of $A$. Let $B = A[S]$ be an invertible $r\times r$ principal submatrix of $A$. By the inductive hypothesis, we know that $\lambda_2(B) \leq 0$. On the other hand, by Cauchy interlacing, we have
    \[
        \lambda_2(B) \geq \lambda_{n-r+2}(A) = 0.
    \]
    However, this implies that $\lambda_2(B) = 0$, which contradicts the invertibility of $B$. Hence, $A$ has at most one positive eigenvalue, as desired.
\end{proof}
    
\subsection{Symmetric Polynomials}
A symmetric function over a ring $R$ is an element $f \in R[[x]]$ such that for any permutation $\sigma: \NN \to \NN$, $f(x_1,x_2,\dots) = f(x_{\sigma(1)}, x_{\sigma(2)},\dots)$. We write $\Lambda_\RR$ (or simply $\Lambda$) for the ring of symmetric functions over $\RR$. The ring of symmetric functions naturally admits a grading by degree; we write $\Lambda^d$ for the degree $d$ graded piece.

Similarly, a symmetric polynomial is an element $f \in R[x_1,\dots,x_n]$ such that for any $\sigma \in S_n$, $f(x_1,\dots,x_n) = f(x_{\sigma(1)},\dots, x_{\sigma(n)})$. We write $\Lambda_n$ to denote the ring of symmetric polynomials over $\RR$ in $n$ variables. We note that $\Lambda_n$ is again graded by degree, and we write $\Lambda_n^d$ for the degree $d$ graded piece.

A simple basis for $\Lambda$ as an $\RR$-vector space is the set of monomial symmetric functions
\[m_\lambda = \sum_{\alpha} x_1^{\alpha_1} x_2^{\alpha_2}\dots,\]
where $\alpha$ ranges over all permutations of the vector $\lambda = (\lambda_1, \lambda_2, \dots)$. In this paper, we primarily work with the normalized monomial basis
\[\til{m}_\lambda := N(m_\lambda) = \frac{m_\lambda}{\lambda!}\] because the normalization makes studying derivatives much cleaner.

We will also work with another basis for symmetric functions, namely Schur symmetric functions. These are again indexed by partitions and defined by
\[s_\lambda = \sum_\mu K_{\lambda,\mu} m_\mu,\]
where the Kostka number $K_{\lambda,\mu}$ is the number of semistandard Young tableaux (SSYT) of shape $\lambda$ and content $\mu$.

\subsection{Partitions}
We will also require some background on integer partitions. Recall that a partition $\lambda = (\lambda_1, \lambda_2, \dots)$ of a nonnegative integer $n$ is a non-increasing sequence of nonnegative numbers $\lambda_1 \geq \lambda_2 \geq \dots$ such that $\sum \lambda_i = n$. The length of $\lambda$, denoted $\ell(\lambda)$, is the number of nonzero entries in $\lambda$.

We recall the dominance order on partitions, which puts a lattice structure on the set of partitions of a given integer $n$.
\begin{definition}
    \emph{Dominance order} is the partial order on partitions of the same size defined as follows. If $\lambda, \mu \vdash n$, we say $\lambda \preceq \mu$ if $\sum_{i=1}^k \lambda_i \leq \sum_{i=1}^k \mu_i$ for all $k$.
\end{definition}

We also work with certain structural properties of partitions. Let $\mu = (\mu_1, \mu_2, \dots, \mu_k)$ be a partition of length $\ell(\mu) = k$. Suppose $\mu$ has the structure  
\[
    \mu = (\mu_1=\ldots=\mu_{m_2-1}>\mu_{m_2}=\ldots=\mu_{m_3-1}>\ldots>\mu_{m_{\ell}}=\ldots=\mu_{m_{\ell+1}-1},0,\ldots,0).
\]
This gives us a set partition $(\beta_1,\dots,\beta_\ell)$ of $[k]$ with $\beta_t = [m_t, m_{t+1})$, where $m_1 = 1$ and $m_{\ell + 1} = k+1$. We define $n_t := m_{t+1} - m_t$ to be the size of block $\beta_t$.

By construction, $\mu_i = \mu_j$ if and only if $i$ and $j$ are in the same part of this set partition. Hence, $n_t$ is equivalently the number of times the $t$th largest distinct entry appears in $\mu$. We note that $\mu$ also induces a set partition of $[n]$ by adding one more block $\beta_{\ell + 1} = [k+1, n]$ to this partition. This new partition again has the property that $\mu_i = \mu_j$ if and only if $i$ and $j$ are in the same block.

Every partition $\lambda$ is also associated to a natural polytope called a permutohedron.
\begin{definition}~\label{def:permutohedron}
Let $\lambda=(\lambda_1\geq \ldots\geq \lambda_n)$ be a partition of $d.$ The \emph{permutohedron} $P(\lambda)\subseteq \{x_1+\cdots+x_n=d\}\subseteq \RR^n$ is given by $\conv\{\lambda^\sigma:\sigma\in S_n\}$ where $\lambda^\sigma$ denotes the composition $\lambda^\sigma = (\lambda_{\sigma(1)},\dots,\lambda_{\sigma(n)})$ for all $\sigma\in S_n.$
\end{definition}
A classical result of Rado \cite{RR52} characterizes integer points of permutohedra. 
\begin{proposition}~\label{prop:rado}
    Let $\lambda=(\lambda_1\geq \ldots\geq \lambda_n)$ be a partition of $d.$ A point $t=(t_1,\ldots,t_n)\in\RR^n$ belongs to $P(\lambda_1,\ldots,\lambda_n)$ if and only if $t$ is a weak composition of $d$
    and for any nonempty subset $\{i_1,\ldots,i_k\}\subseteq[n]$ we have $$t_{i_1}+\ldots+t_{i_k}\leq \lambda_1+\ldots+\lambda_k.$$
\end{proposition}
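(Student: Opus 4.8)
Since this is a classical fact — and one may simply invoke \cite{RR52} — I will only sketch a self-contained argument. Write $d = \lambda_1 + \dots + \lambda_n$ and let
\[
  Q := \Bigl\{\, t \in \RR^n \;:\; t \ge 0,\ \ \sum_{i=1}^n t_i = d,\ \ \sum_{i \in S} t_i \le \lambda_1 + \dots + \lambda_{|S|}\ \text{ for every } \emptyset \ne S \subseteq [n] \,\Bigr\},
\]
so that the claim is exactly $P(\lambda) = Q$. The inclusion $P(\lambda) \subseteq Q$ is the easy half: writing a point of $P(\lambda)$ as a convex combination $\sum_\sigma c_\sigma \lambda^\sigma$ of the vertices, nonnegativity and the coordinate sum are immediate, and for a $k$-element set $S$ one has $\sum_{i\in S}\lambda_{\sigma(i)} \le \lambda_1 + \dots + \lambda_k$ because $\sigma(S)$ consists of $k$ distinct indices; the inequality then survives the convex combination.

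For $Q \subseteq P(\lambda)$ I would induct on $n$, the case $n = 1$ being trivial. First I would dispose of every $t \in Q$ that lies on the relative boundary of $Q$, i.e.\ that makes some inequality constraint of $Q$ tight. If $\sum_{i\in S}t_i = \lambda_1 + \dots + \lambda_{|S|}$ for some proper nonempty $S \subseteq [n]$ of size $k$, then — using that $Q$ and $P(\lambda)$ are $S_n$-invariant, so we may relabel $S = \{1,\dots,k\}$ — one checks that $(t_1,\dots,t_k)$ satisfies the defining conditions of the analogous polytope for the partition $(\lambda_1,\dots,\lambda_k)$ of $\lambda_1 + \dots + \lambda_k$, and $(t_{k+1},\dots,t_n)$ those for $(\lambda_{k+1},\dots,\lambda_n)$; the second check is the only mildly delicate point, since it requires subtracting the equality for $S$ from the inequality for $S \cup T$ and telescoping to a bound by the top $|T|$ parts of $(\lambda_{k+1},\dots,\lambda_n)$. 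By the inductive hypothesis both blocks lie in the corresponding permutohedra, and gluing convex combinations of rearrangements of the two blocks writes $t$ as a convex combination of rearrangements of $\lambda$, i.e.\ $t \in P(\lambda)$. If instead some nonnegativity constraint is tight, say $t_m = 0$, then the inequality for $S = [n]\setminus\{m\}$ gives $d \le d - \lambda_n$, forcing $\lambda_n = 0$; deleting coordinate $m$ reduces to $n - 1$ variables and the partition $(\lambda_1,\dots,\lambda_{n-1})$ of $d$, and induction again concludes.

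Finally, for a point $t \in Q$ at which every inequality constraint is strict, I would fix $v := \lambda \in P(\lambda) \subseteq Q$ and (assuming $t \ne v$) run the ray $r(s) = v + s(t - v)$, which stays in the hyperplane $\{\sum_i x_i = d\}$. Since $Q$ is compact, there is a largest $s^* \ge 1$ with $r(s^*) \in Q$, and at $t^* := r(s^*)$ some inequality constraint must be tight — otherwise a small further step would keep all of them strict, contradicting maximality. By the boundary case just treated, $t^* \in P(\lambda)$, and then $t = \bigl(1 - \tfrac{1}{s^*}\bigr) v + \tfrac{1}{s^*} t^*$ exhibits $t$ as a convex combination of two points of $P(\lambda)$, so $t \in P(\lambda)$. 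The main obstacle is thus not conceptual but bookkeeping: setting up the two reduced systems of inequalities in the splitting step so that the inductive hypothesis applies verbatim, together with the observation that a nonnegativity constraint can be tight only when $\lambda_n = 0$.
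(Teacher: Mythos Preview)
The paper does not give its own proof of this proposition; it simply attributes the result to Rado and cites \cite{RR52}. Your sketch is a correct self-contained argument for the real-point version of the statement (reading ``weak composition of $d$'' as a nonnegative real vector with coordinate sum $d$, which is what the equality $P(\lambda)=Q$ requires). The induction on $n$ with a split at a tight subset-sum face, the deletion of a zero coordinate forcing $\lambda_n=0$, and the ray-to-boundary trick for relatively interior points are all standard and correctly assembled; the one piece of bookkeeping you flag --- subtracting the tight equality for $S$ from the inequality for $S\cup T$ to obtain $\sum_{i\in T} t_i \le \lambda_{k+1}+\dots+\lambda_{k+|T|}$ --- is exactly what is needed and goes through without difficulty. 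One small redundancy: the nonnegativity constraints in your $Q$ are already implied by the subset-sum inequalities (take $S=[n]\setminus\{j\}$ to get $t_j\ge\lambda_n\ge 0$), so the separate treatment of a tight $t_m=0$ could be folded into the tight-face case, but handling it separately as you do is harmless.
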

In particular, the second condition simplifies to dominance when $t$ is a partition. A short proof of the simplification can be found in~\cite{qinthesis}. Later on, this will give us an immediate alternative interpretation of supports of symmetric M-convex sets.
\begin{proposition}~\label{prop:radorev}
A partition $\mu\vdash d$ belongs to $P(\lambda)$ if and only if $\mu\preceq \lambda.$
\end{proposition}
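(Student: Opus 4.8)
The plan is to deduce this directly from Rado's characterization (\cref{prop:rado}). Since $\mu \vdash d$, after padding with zeros to length $n$ it is automatically a weak composition of $d$, so the only real content is to check that the family of subset inequalities appearing in \cref{prop:rado} is equivalent, \emph{for a partition} $\mu$, to the dominance inequalities defining $\mu \preceq \lambda$.

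The forward direction is immediate: if $\mu \in P(\lambda)$, then applying \cref{prop:rado} to the particular subset $\{i_1,\dots,i_k\} = \{1,\dots,k\}$ gives $\mu_1 + \cdots + \mu_k \le \lambda_1 + \cdots + \lambda_k$ for every $k$, which is exactly the statement $\mu \preceq \lambda$.

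For the reverse direction, assume $\mu \preceq \lambda$ and fix an arbitrary nonempty subset $S = \{i_1,\dots,i_k\} \subseteq [n]$. Because $\mu$ is a partition, its entries are non-increasing, so among all $k$-element subsets of $[n]$ the sum $\sum_{i \in S}\mu_i$ is maximized by the first $k$ indices; hence $\mu_{i_1} + \cdots + \mu_{i_k} \le \mu_1 + \cdots + \mu_k$. Chaining this with the dominance inequality $\mu_1 + \cdots + \mu_k \le \lambda_1 + \cdots + \lambda_k$ yields the subset inequality required by \cref{prop:rado}, so $\mu \in P(\lambda)$.

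There is essentially no obstacle here; the proof is a one-line observation once \cref{prop:rado} is in hand. The only point that needs a word of care is the bookkeeping: $\mu$ and $\lambda$ must be regarded as vectors of a common length $n$ (pad the shorter with zeros), and one notes that appending zeros changes neither the partial sums $\mu_1 + \cdots + \mu_k$ nor membership in $P(\lambda)$, so both the permutohedron and the dominance order are well posed and the equivalence above is unambiguous.
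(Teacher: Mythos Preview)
Your proof is correct and follows exactly the approach the paper indicates: the paper does not give a self-contained proof but remarks that ``the second condition simplifies to dominance when $t$ is a partition'' and cites \cite{qinthesis} for the short argument, which is precisely the reduction you carry out using \cref{prop:rado}. Nothing further is needed.
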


\section{General Theory on Lorentzian Symmetric Polynomials}\label{sec:general-theory}

In this section, we establish some of the general properties of symmetric functions that make them particularly amenable to testing the Lorentzian property. First, however, we need to address the subtlety that a symmetric function is not a polynomial, since they are traditionally defined on an infinite set of variables. However, by restricting to finite numbers of variables, we can think of a symmetric function as defining a family of symmetric polynomials, and we define a symmetric function to be Lorentzian if all polynomials in this family are Lorentzian.

\begin{definition}\label{def:lorentzian-symmetric-function}
    We say a symmetric function $f\in\Lambda^d$ is Lorentzian if for all $n$, the symmetric polynomial $f_n(x_1,\dots,x_n):=f(x_1,\dots,x_n,0,\dots) \in \Lambda^d_n$ is Lorentzian.
\end{definition}

For example, normalized Schur polynomials are one family of Lorentzian symmetric functions \cite{huh2022schurlorentzian}, as are chromatic symmetric functions of certain graphs arising from abelian Dyck paths \cite{Matherne24chromaticsymmetric}.

The finite polynomial specializations in this definition are also naturally related to one another. In particular, if $f_n$ is Lorentzian, then so is $f_k$ for all $k \leq n$.

\begin{proposition}\label{prop:specialization-descending-chain}
    Let $\{m_\alpha(n)\}_{\alpha \vdash d} \subseteq \RR[x_1,\dots,x_n]$ denote the monomial basis for $\Lambda_n^d$, and let
    \[A_n = \left\{(c_\alpha)_{\alpha\vdash d} \mid f_n(x_1,\dots,x_n) :=\sum c_\alpha m_\alpha(n) \text{ is Lorentzian}\right\}.\]

    \noindent Then these sets form a descending chain $A_d \supseteq A_{d+1} \supseteq \dots$, and the intersection $\bigcap_{n=d}^\infty A_n$ is non-empty.
\end{proposition}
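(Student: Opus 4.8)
The plan is to prove the two assertions separately; each is fairly soft. For the descending chain, the essential input is the standard fact --- folklore, following \citeauthor*{BrandenHuh20Lorentzian} --- that $L^d_n$ is closed under setting a variable to zero: if $g\in L^d_{n+1}$ then $\evalat{g}{x_{n+1}=0}\in L^d_n$. (Condition (H) survives because each relevant Hessian of $\evalat{g}{x_{n+1}=0}$ is a principal submatrix of the corresponding Hessian of $g$, so Cauchy interlacing applies exactly as in \cref{prop:signature-from-minors}; condition (S) survives because intersecting the M-convex set $\supp(g)$ with the coordinate hyperplane $\{x_{n+1}=0\}$ yields a face of a generalized permutohedron, hence an M-convex set, which together with the inductive description of Lorentzian polynomials gives the claim by induction on $d$.) Granting this, I would highlight the bookkeeping point that is precisely why the chain begins at $n=d$: when $n\ge d$, every partition $\alpha\vdash d$ has $\ell(\alpha)\le d\le n$, so $\evalat{m_\alpha(n+1)}{x_{n+1}=0}=m_\alpha(n)$ and no basis element is destroyed. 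Hence if $(c_\alpha)\in A_{n+1}$, setting $x_{n+1}=0$ in $f_{n+1}=\sum c_\alpha m_\alpha(n+1)$ produces $f_n=\sum c_\alpha m_\alpha(n)$ \emph{with the same coefficient vector}; since $f_n$ is Lorentzian, $(c_\alpha)\in A_n$, so $A_{n+1}\subseteq A_n$.

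For non-emptiness it suffices to exhibit one coefficient vector lying in every $A_n$, i.e.\ a degree-$d$ symmetric function all of whose variable truncations are Lorentzian. I would take $e_1^d=(x_1+x_2+\cdots)^d$, whose truncation to $n$ variables is $(x_1+\cdots+x_n)^d$, and check directly that this lies in $L^d_n$ for every $n$: for $|\alpha|\le d-2$ the derivative $\dl^\alpha(x_1+\cdots+x_n)^d$ is a positive multiple of $(x_1+\cdots+x_n)^{d-|\alpha|}$, and a power $(x_1+\cdots+x_n)^k$ with $k\ge2$ is indecomposable (any two variables appear together in some monomial), giving (S); while for $|\alpha|=d-2$ the Hessian $\nabla^2\dl^\alpha(x_1+\cdots+x_n)^d$ is a positive multiple of $\ones\ones^T$, which has rank $1$ and hence exactly one positive eigenvalue, giving (H). Expanding $(x_1+\cdots+x_n)^d=\sum_{\alpha\vdash d}\frac{d!}{\alpha!}\,m_\alpha(n)$ (valid for $n\ge d$) then shows the common vector $(d!/\alpha!)_{\alpha\vdash d}$ lies in $\bigcap_{n\ge d}A_n$; alternatively one could cite that normalized Schur functions are Lorentzian symmetric functions \cite{huh2022schurlorentzian}.

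I do not expect a genuine obstacle --- this proposition is essentially a warm-up. The two places that need care are: flagging and justifying the closure-under-restriction property, since it is not literally part of \cref{def:lorentzian}; and the index bookkeeping, namely that restriction is lossless on coefficient vectors exactly when $n\ge d$, which is the reason the chain is asserted to begin at $A_d$ rather than at a smaller index.
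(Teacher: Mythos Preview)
Your proposal is correct and follows essentially the same approach as the paper: both reduce the descending chain to the fact that setting a variable to zero preserves Lorentzianity, together with the observation that $\evalat{m_\alpha(n+1)}{x_{n+1}=0}=m_\alpha(n)$ when $n\ge d$. The only difference is the witness for non-emptiness: the paper uses $e_d$ (citing \cite[Example 2.27]{BrandenHuh20Lorentzian}), whereas you use $e_1^d=(x_1+\cdots+x_n)^d$ with a direct Hessian check---both are equally valid and equally easy, so this is a cosmetic rather than a substantive divergence.
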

\begin{proof}
    First, we note that $m_\alpha(n-1) = \left.m_\alpha(n)\right\rvert_{x_n=0}$. Thus, $f_{n-1}(x_1,\dots,x_{n-1}) = f_n(x_1,\dots,x_{n-1},0)$. Since specialization preserves Lorentzianity, this implies that $A_n \subseteq A_{n-1}$, as desired.

    Moreover, the elementary symmetric polynomial $e_d(x_1,\dots,x_n)$ is Lorentzian for all $n$ \cite[Example 2.27]{BrandenHuh20Lorentzian}, so the intersection $\bigcap_{n=d}^\infty A_n$ is nonempty.
\end{proof}

Lorentzian symmetric functions inherit many useful properties from Lorentzian polynomials. For example, products of Lorentzian symmetric functions are again Lorentzian.

\begin{proposition}
    Suppose $f \in \Lambda^d$, $g \in \Lambda^e$ are Lorentzian symmetric functions. Then $fg \in \Lambda^{d+e}$ is again a Lorentzian symmetric function.
\end{proposition}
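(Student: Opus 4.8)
The plan is to reduce the statement about symmetric functions to the known fact that the product of Lorentzian polynomials is Lorentzian. The key observation is that Lorentzianity of a symmetric function is defined (in \cref{def:lorentzian-symmetric-function}) purely in terms of its finite-variable specializations, and that specialization commutes with multiplication: if $f \in \Lambda^d$ and $g \in \Lambda^e$, then for every $n$ we have $(fg)_n = f_n \cdot g_n$ as polynomials in $\RR[x_1,\dots,x_n]$. This is immediate from the fact that restricting $x_{n+1} = x_{n+2} = \cdots = 0$ is a ring homomorphism $\Lambda \to \Lambda_n$.

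**First I would** recall the relevant closure property of Lorentzian polynomials: by \cite[Theorem 2.10 / Corollary 4.5]{BrandenHuh20Lorentzian} (or the "multiplicativity" of completely log-concave polynomials in \cite{anari2024masons3}), if $p \in L^d_n$ and $q \in L^e_n$ are Lorentzian polynomials in the same set of $n$ variables, then $pq \in L^{d+e}_n$. Note $f_n$ and $g_n$ live in $\RR_{\geq 0}[x_1,\dots,x_n]$ and are homogeneous of degrees $d$ and $e$ respectively, so their product is a homogeneous polynomial of degree $d+e$ with nonnegative coefficients — the setting in which the product theorem applies.

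**Then the argument is:** fix an arbitrary $n$. By hypothesis $f$ and $g$ are Lorentzian symmetric functions, so $f_n \in L^d_n$ and $g_n \in L^e_n$. Since $(fg)_n = f_n g_n$, the product theorem for Lorentzian polynomials gives $(fg)_n \in L^{d+e}_n$. As $n$ was arbitrary, every finite specialization of $fg$ is Lorentzian, so $fg$ is a Lorentzian symmetric function of degree $d+e$ by \cref{def:lorentzian-symmetric-function}. That $fg$ lies in $\Lambda^{d+e}$ is just the standard fact that $\Lambda$ is a graded ring.

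**I do not expect any real obstacle here** — the only point requiring a word of care is checking that specialization of symmetric functions to finitely many variables is compatible with products, i.e. $(fg)_n = f_n g_n$, which follows because setting variables to zero is an $\RR$-algebra homomorphism. Everything else is a direct invocation of the product theorem for Lorentzian polynomials and the definition of Lorentzian symmetric functions.
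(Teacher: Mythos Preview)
Your proposal is correct and follows essentially the same approach as the paper: observe that $(fg)_n = f_n g_n$ since restriction to finitely many variables is a ring homomorphism, then apply the product theorem for Lorentzian polynomials (the paper cites \cite[Corollary 2.32]{BrandenHuh20Lorentzian}) to conclude that $(fg)_n$ is Lorentzian for every $n$.
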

\begin{proof}
    We note that $(fg)_n = f_n g_n$, so it suffices to show that $f_ng_n$ is Lorentzian for all $n$. Indeed, $f_n$ and $g_n$ are Lorentzian by definition, and the product of Lorentzian polynomials is Lorentzian \cite[Corollary 2.32]{BrandenHuh20Lorentzian}, which completes the proof.
\end{proof}

We can also study the class of linear operators that preserve Lorentzianity of symmetric functions. For example, we consider the symmetric functions analog of taking partial derivatives.
\begin{example}
    Let $\frac{\dl}{\dl p_1}: \Lambda^d \to \Lambda^{d-1}$ be the derivative with respect to $p_1$, acting on symmetric functions expressed as polynomials in the power sum basis. We claim that this is a Lorentzianity-preserving linear operator on symmetric functions.

    Indeed, we claim that $\left(\frac{\dl}{\dl p_1} f\right)_{n} = \left.\frac{\dl}{\dl x_{n+1}}\right\rvert_{x_{n+1} = 0} f_{n+1}$. The right-hand side of this equation is Lorentzian whenever $f_{n+1}$ is Lorentzian, so proving this equality is sufficient to show that $\frac{\dl}{\dl p_1}$ is Lorentzianity-proving.

    By linearity, it suffices to show that $\left(\frac{\dl}{\dl p_1}p_\lambda\right)_n = \left.\frac{\dl}{\dl x_{n+1}}\right\rvert_{x_{n+1} = 0} p_\lambda(x_1,\dots,x_{n+1})$. Let $\lambda = (1^{m_1}2^{m_2}\dots)$ be the partition with $m_1$ 
    ones, $m_2$ twos, and so on. Then we have
    \[\left(\frac{\dl}{\dl p_1}p_\lambda\right)_n = m_1\cdot p_{(1^{m_1-1}2^{m_2}\dots)}(x_1,\dots,x_n).\]
    On the other hand, we note that
    \[\left.\frac{\dl}{\dl x_{n+1}}\right\rvert_{x_{n+1}=0} p_r = \begin{cases}
        1, & \text{if } r = 1\\
        0, & \text{otherwise}
    \end{cases}.\]
    Therefore,
    \[\left.\frac{\dl}{\dl x_{n+1}}\right\rvert_{x_{n+1}=0} p_\lambda(x_1,\dots,x_{n+1}) = m_1 p_{(1^{m_1-1}2^{m_2}\dots)},\]
    which completes the proof.
\end{example}

We also get some version of a symbol theory for linear operators preserving Lorentzian symmetric functions, though the statement is not nearly as neat as it is for the polynomial case.
\begin{proposition}
    Let $T: \Lambda^d \to \Lambda^\ell$ be a linear map on symmetric functions. For $n \geq \ell$, define $T_n: \Lambda^d_n \to \Lambda^\ell_n$ by $T_n(m_\lambda(x_1,\dots,x_n)) = \left.T(m_\lambda)\right\rvert_{x_{n+1} = x_{n+2} = \dots = 0}$. Suppose that for all $n \geq \ell$,
    \[\operatorname{Symb}_{T,n}(w,z) := \sum_{\substack{\lambda \text{ partition}\\ \ell(\lambda) \leq n, \lambda_1 \leq d}}\binom{\mathbf{d}}{\lambda} \frac{T_n(m_\lambda(w_1,\dots,w_n))}{m_\lambda(1^n)}m_{\mathbf{d} - \lambda}(z_1,\dots,z_n)\]
    is Lorentzian, where we use $\mathbf{d}$ as shorthand for $(d,\dots,d)\in\ZZ^n_{\geq 0}$. Then $T$ is a Lorentzianty-preserving operator on symmetric functions.
\end{proposition}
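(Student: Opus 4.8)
\emph{Proof proposal.} The plan is to reduce the claim to the symbol theorem for linear operators preserving Lorentzian polynomials in the ordinary (non-symmetric) setting---the polynomial analogue of the present statement---by manufacturing, for each fixed number of variables $n$, a genuine polynomial operator $\til T_n$ whose restriction to symmetric polynomials is $T_n$ and whose polynomial symbol is exactly $\operatorname{Symb}_{T,n}$. First I would reduce to finitely many variables: it suffices to show that \emph{for every $n \ge \ell$, the operator $T_n\colon \Lambda_n^d \to \Lambda_n^\ell$ sends Lorentzian symmetric polynomials to Lorentzian polynomials}. Granting this, let $f \in \Lambda^d$ be Lorentzian and fix $N$; choose $n \ge \max(N,\ell,d)$. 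Since every partition of $d$ has at most $d \le n$ parts, $f_n = \sum_{\lambda \vdash d} c_\lambda\, m_\lambda(x_1,\dots,x_n)$, so applying $T_n$ and unwinding its definition on the monomial basis gives $T_n(f_n) = \sum_\lambda c_\lambda\,(Tm_\lambda)_n = (Tf)_n$. As $f_n$ is Lorentzian, so is $(Tf)_n$, and since specialization preserves Lorentzianity, its specialization at $x_{N+1}=\dots=x_n=0$, which equals $(Tf)_N$, is Lorentzian; hence $Tf$ is a Lorentzian symmetric function.

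For the main construction, fix $n \ge \ell$ and write $\hat\alpha$ for the weakly decreasing rearrangement of $\alpha \in \ZZ_{\ge 0}^n$. Define $\til T_n\colon \RR[w_1,\dots,w_n]_d \to \RR[w_1,\dots,w_n]_\ell$ on monomials by
\[
  \til T_n(w^\alpha) \;:=\; \frac{T_n\big(m_{\hat\alpha}(w_1,\dots,w_n)\big)}{m_{\hat\alpha}(1^n)},
\]
extended linearly, and verify two facts. (i) $\til T_n$ restricts to $T_n$ on $\Lambda_n^d$: since $m_\lambda(w) = \sum_{\alpha\,:\,\hat\alpha = \lambda} w^\alpha$ is a sum of exactly $m_\lambda(1^n)$ monomials, all with sorted exponent $\lambda$, we get $\til T_n(m_\lambda(w)) = m_\lambda(1^n)\cdot \frac{T_n(m_\lambda(w))}{m_\lambda(1^n)} = T_n(m_\lambda(w))$. (ii) The polynomial symbol of $\til T_n$ is $\operatorname{Symb}_{T,n}$: starting from $\operatorname{Symb}(\til T_n)(w,z) = \sum_{|\alpha|=d}\binom{\mathbf d}{\alpha}\,\til T_n(w^\alpha)\, z^{\mathbf d - \alpha}$, group the sum by $\lambda = \hat\alpha$, note that $\binom{\mathbf d}{\alpha}$ depends only on $\hat\alpha$, and use $\sum_{\alpha\,:\,\hat\alpha = \lambda} z^{\mathbf d - \alpha} = m_{\mathbf d - \lambda}(z)$; the outcome is exactly the displayed formula for $\operatorname{Symb}_{T,n}(w,z)$.

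Given (i) and (ii), the proof concludes quickly: by hypothesis $\operatorname{Symb}(\til T_n) = \operatorname{Symb}_{T,n}$ is Lorentzian, so the polynomial symbol theorem shows $\til T_n$ preserves Lorentzian polynomials, and by (i) so does $T_n$ on symmetric inputs, which is the reduced claim above. (If $\operatorname{Symb}_{T,n}\equiv 0$ for some $n$, then $\til T_n = 0$, hence $T_n = 0$, and the reduced claim is trivial there.) The main obstacle is the bookkeeping in (ii): checking that the particular normalization built into $\operatorname{Symb}_{T,n}$---the weights $\binom{\mathbf d}{\lambda}$, the denominators $m_\lambda(1^n)$, and the monomial symmetric polynomials $m_{\mathbf d-\lambda}$---is precisely what makes $\operatorname{Symb}(\til T_n)$ coincide, term by term, with the symbol of $\til T_n$ in the convention under which the polynomial symbol theorem is stated (the bihomogeneous one weighted by $\binom{\mathbf d}{\alpha}$). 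A minor additional point is to keep track of the range of $n$ for which $\operatorname{Symb}_{T,n}$ is assumed Lorentzian versus the range actually used, $n \ge \max(\ell,d)$.
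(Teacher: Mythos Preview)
Your proposal is correct and follows essentially the same route as the paper: define $\til T_n(w^\alpha) = T_n(m_{[\alpha]})/m_{[\alpha]}(1^n)$, verify that its Br\"and\'en--Huh symbol collapses, after grouping $\alpha$ by its sorted partition, to $\operatorname{Symb}_{T,n}$, and then invoke \cite[Theorem~3.2]{BrandenHuh20Lorentzian}. Your write-up is in fact slightly more careful than the paper's in two places---you explicitly check that $\til T_n$ restricts to $T_n$ on $\Lambda_n^d$, and you spell out the reduction $(Tf)_n = T_n(f_n)$ via the descending-chain argument---but the architecture is identical.
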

\begin{proof}
    By construction, $(T(f))_n = T_n(f_n)$, so it suffices to show that $T_n: \Lambda^d_n \to \Lambda^\ell_n$ is Lorentzianity-preserving for all sufficiently large $n$. We note that we can extend $T_n$ to a linear map $\til{T}_n:\RR[x_1,\dots,x_n] \to \RR[x_1,\dots,x_n]$ by defining $\til{T}_n(x^\alpha) := \frac{1}{m_{[\alpha]}(1^n)}T_n(m_{[\alpha]}(x_1,\dots,x_n))$, and that it suffices to show that $\til{T}_n$ preserves Lorentzianity.

    By \cite[Theorem 3.2]{BrandenHuh20Lorentzian}, it suffices to show that
    \[\operatorname{Symb}_{\til{T}_n} = \sum_{0 \leq \alpha \leq \mathbf{d}}\binom{\mathbf{d}}{\alpha}\til{T}_n(w^\alpha)z^{\mathbf{d}-\alpha}\]
    is Lorentzian.

    By construction, $\til{T}_n(w^\alpha)$ depends only on the partition $[\alpha]$ obtained by reordering the parts of $\alpha$, so
    \begin{align*}
        \operatorname{Symb}_{\til{T}_n} &= \sum_{\substack{\lambda \text{ partition}:\\
        \ell(\lambda) \leq n, \lambda_1 \leq d}} \binom{\mathbf{d}}{\lambda} \sum_{\alpha \til \lambda} \til{T}_n(w^\alpha)z^{\mathbf{d}-\alpha}\\
        &=\sum_{\substack{\lambda \text{ partition}:\\
        \ell(\lambda) \leq n, \lambda_1 \leq d}} \binom{\mathbf{d}}{\lambda} \sum_{\alpha \sim \lambda} \til{T}_n(w^\alpha)z^{\mathbf{d}-\alpha}\\
        &=\sum_{\substack{\lambda \text{ partition}:\\
        \ell(\lambda) \leq n, \lambda_1 \leq d}} \binom{\mathbf{d}}{\lambda} \frac{T_n(m_\lambda(w_1,\dots,w_n))}{m_\lambda(1^n)}\sum_{\alpha \sim \lambda}z^{\mathbf{d}-\alpha}\\
        &=\sum_{\substack{\lambda \text{ partition}:\\
        \ell(\lambda) \leq n, \lambda_1 \leq d}} \binom{\mathbf{d}}{\lambda} \frac{T_n(m_\lambda(w_1,\dots,w_n))}{m_\lambda(1^n)}m_{\mathbf{d} - \lambda}(z_1,\dots,z_n),
    \end{align*}
    which is exactly $\operatorname{Symb}_{T,n}(w,z)$. Hence, if $\operatorname{Symb}_{T,n}$ is Lorentzian for all $n$, then $T_n$ is Lorentzianity-preserving for all $n$, and so $T$ preserves Lorentzianity of symmetric functions, as desired.
\end{proof}

While we can naturally extend any symmetric polynomial to a symmetric function by simply taking the same coefficients in the monomial basis, this extension need not preserve the Lorentzian property. The semialgebraic descriptions we find in \cref{sec:small degree} include the number of variables as an input; by taking $n \to \infty$ in these inequalities, one recovers a semialgebraic description for Lorentzian symmetric functions.

\subsection{Symmetric M-Convex Sets}
    One necessary condition for a polynomial to be Lorentzian is that its support must be M-convex, in the sense of \cite{murota2003discreteconvexanalysis}. In fact, any M-convex set is the support of some Lorentzian polynomial \cite[Theorem 3.10]{BrandenHuh20Lorentzian}. Recall that we say a set $J \subseteq \ZZ^n$ is \emph{M-convex} if for all $x,y \in J$ and all $i \in [n]$ such that $x_i > y_i$, there exists an index $j$ satisfying $x_j < y_j$ such that $x-e_i+e_j$ and $y+e_i-e_j$ are both in $J$.
    
    If $f$ is also symmetric, this puts additional constraints on its support. In this section, we fully characterize the possible supports of Lorentzian symmetric functions and polynomials.
    
    \begin{definition}
        Let $f = \sum_{k=0}^d \sum_{\lambda \vdash k} c_\lambda m_\lambda$. We define the \emph{$m$-support} of $f$ to be $\supp_m(f) = \{\lambda: c_\lambda \neq 0\}$.
    \end{definition}
    
    Specifically, we show that if $f$ is a Lorentzian symmetric function or polynomial, then the $m$-support of $f$ is an order ideal of the dominance lattice with a unique maximal element.

    \begin{theorem}\label{thm:sym-lor-supp-sufficiency}
        If $f$ is a Lorentzian symmetric polynomial of degree $d$ on $n \geq d$ variables, then there is some $\lambda\vdash d$ such that $\supp_m(f) = \{\mu : \mu\preceq\lambda\}$. That is, the $m$-support of $f$ is an interval $[1^d, \lambda]$ in dominance order.
    \end{theorem}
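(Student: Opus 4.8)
The plan is to establish two things: first, that $\supp_m(f)$ is an order ideal in dominance order (i.e., downward closed), and second, that it has a unique maximal element. For the downward-closure direction, I would argue contrapositively: suppose $\mu \in \supp_m(f)$ and $\nu \preceq \mu$; I want $\nu \in \supp_m(f)$. Since dominance order is graded with covers given by moving a single box, it suffices to handle the case where $\mu$ covers $\nu$, i.e., $\nu$ is obtained from $\mu$ by taking one box from row $i$ and adding it to a later row $j$ (so $\nu_i = \mu_i - 1$, $\nu_j = \mu_j + 1$, with $\mu_i > \mu_j$, adjusting for the partition condition). The key tool is that the support of a Lorentzian polynomial is M-convex (cited from \cite{BrandenHuh20Lorentzian}), applied to the exponent-vector support of $f_n$ as an honest polynomial: if some permutation of $\mu$ is in the integer support, then M-convexity produces a neighbor in the support, and after symmetrizing this neighbor's partition is exactly $\nu$. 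One must be slightly careful that the exchange step in the M-convexity definition can be realized to decrease in dominance order — choosing $x$ to be $\mu$ sorted decreasingly and picking $i$ to be a row where $x_i > y_i$ for a target $y$ lying strictly below $\mu$ should do it, but the cleanest route is probably to invoke \cref{prop:rado}/\cref{prop:radorev}: M-convex sets that are symmetric (closed under the $S_n$-action) are exactly unions of integer points of permutohedra $P(\lambda)$, and $P(\nu) \subseteq P(\mu)$ when $\nu \preceq \mu$, so the partitions appearing form a union of dominance order ideals, hence an order ideal.

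For the unique-maximal-element claim, suppose for contradiction that $\lambda$ and $\lambda'$ are two distinct maximal elements of $\supp_m(f)$. Condition (S) of \cref{def:lorentzian} — indecomposability of all low-order derivatives, in particular of $f$ itself — is the lever here: if the $m$-support split into two "incomparable" pieces I would want to extract a genuine decomposition of $f$ (or of some $\partial^\alpha f$) into a sum of polynomials on disjoint variable sets, contradicting (S). Concretely, since $\lambda$ and $\lambda'$ are incomparable in dominance, their join $\lambda \vee \lambda'$ (which exists, as dominance is a lattice) strictly dominates both, and by the downward-closure already established, $\supp_m(f)$ cannot contain $\lambda \vee \lambda'$; so $\supp_m(f)$ is a union of two proper principal ideals $[1^d,\lambda]\cup[1^d,\lambda']$ (plus possibly more maximal elements, handled the same way). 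The plan is to show this forces the Newton polytope $\conv(\supp(f_n))$ — which equals $\conv(P(\lambda)\cup P(\lambda'))$ — to fail to be a generalized permutohedron / to fail M-convexity of the support, or, more directly, to exhibit a variable-disjoint decomposition. Actually the most robust approach: a Lorentzian polynomial has M-convex support, and an M-convex set has a connected exchange graph, whereas the integer points of $P(\lambda) \cup P(\lambda')$ with $\lambda, \lambda'$ incomparable and $\lambda\vee\lambda'$ absent need not be M-convex — one checks the M-exchange axiom fails at a point of $P(\lambda)$ and a point of $P(\lambda')$, since the required exchange would land at an integer point of $P(\lambda\vee\lambda')\setminus(P(\lambda)\cup P(\lambda'))$.

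The main obstacle I anticipate is the unique-maximal-element step: showing rigorously that incomparable maxima violate M-convexity (or indecomposability) requires producing the right pair of support points and the specific coordinate $i$ at which the exchange axiom fails, and verifying that every candidate index $j$ either fails to be in the support or else witnesses membership in $P(\lambda\vee\lambda')$. I would handle this by reducing to the two-maximal-elements case, then taking $x$ a permutation of $\lambda$ and $y$ a permutation of $\lambda'$ chosen to be "as aligned as possible," and analyzing the single box-move; a partition-combinatorics lemma comparing $\lambda$, $\lambda'$, and $\lambda\vee\lambda'$ on the relevant coordinates should close it. The downward-closure step, by contrast, should be routine once the permutohedron description of symmetric M-convex sets is in hand.
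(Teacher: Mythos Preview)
Your plan is essentially the paper's own route: use M-convexity of $\supp(f_n)$ together with \cref{prop:radorev} to get downward closure (the paper argues this exactly via $P(\lambda)\cap\ZZ^n\subseteq\supp(f_n)$), and then contradict the existence of two maximal elements by an M-convex exchange. So the overall architecture matches.

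Where you diverge is in the uniqueness step, and here you are making it harder than it needs to be. The indecomposability idea is a dead end: having two dominance-incomparable maxima does not produce a variable-disjoint decomposition (e.g.\ permutations of $(3,1,0,0)$ and $(2,2,0,0)$ share variables freely), so condition~(S) gives no leverage. Your $\lambda\vee\lambda'$ framing is also a detour: showing the exchange lands ``outside $P(\lambda)\cup P(\lambda')$'' does not directly contradict anything unless you already know those are the only maxima, and even then you have to control where it lands. The paper's argument is much tighter: take $\lambda,\mu$ themselves as exponent vectors, pick the \emph{smallest} $k$ with $\lambda_k>\mu_k$, and apply the exchange axiom at index $k$. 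Any valid $j$ has $\mu_j>\lambda_j$; if $j<k$ then $\lambda-e_k+e_j$ has strictly larger partial sums than $\lambda$ on $[j,k)$, so its sorted partition strictly dominates $\lambda$; if $j>k$ then symmetrically $\mu+e_k-e_j$ strictly dominates $\mu$. Either way one of the two ``maximal'' elements is not maximal. This avoids the join, avoids any case analysis on other maxima, and closes in a few lines --- I would replace your uniqueness plan with this argument.
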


    We prove this theorem in two steps. First we show that $\supp_m(f)$ has a unique maximal element. Then, we show that M-convexity implies that all partitions beneath this maximal element in dominance order must also be in the $m$-support of $f$.

    \begin{lemma}\label{lem:sym-lorentzian-unique-maxl}
        Suppose $f = \sum_{\lambda\vdash d} c_\lambda m_\lambda$ is a Lorentzian symmetric polynomial. Then $\supp_m(f)$ has a unique maximal element with respect to dominance order.
    \end{lemma}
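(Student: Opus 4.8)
The plan is to use the support condition (S) in the definition of Lorentzian polynomials—the fact that every derivative $\partial^\alpha f$ with $|\alpha| \le d-2$ is indecomposable—to rule out the possibility of two incomparable maximal elements in $\supp_m(f)$. Suppose for contradiction that $\mu$ and $\nu$ are two distinct maximal elements of $\supp_m(f)$ in dominance order. Since neither dominates the other, there is some index $k$ with $\sum_{i=1}^k \mu_i > \sum_{i=1}^k \nu_i$ and some index $\ell$ with $\sum_{i=1}^\ell \nu_i > \sum_{i=1}^\ell \mu_i$.

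The key idea is to take derivatives to isolate a decomposition. Since $f$ is symmetric and we have $n \ge d$ variables, I would look at a partial derivative $\partial^\alpha$ of order $d-2$ (or perhaps lower) chosen so that the resulting quadratic form, or the resulting indecomposable piece, splits the variables according to the "disagreement" between $\mu$ and $\nu$. Concretely, I expect to choose $\alpha$ so that $\partial^\alpha m_\mu$ involves only variables in some set $A$, while $\partial^\alpha m_\nu$ involves only variables in a disjoint set $B$, with all other terms $\partial^\alpha m_\rho$ for $\rho \in \supp_m(f)$ vanishing. If such an $\alpha$ exists, then $\partial^\alpha f$ is a nonzero sum of a polynomial in the variables $A$ and a nonzero polynomial in the variables $B$ (these are nonzero precisely because $\mu, \nu$ are \emph{maximal}, so the monomials $m_\mu, m_\nu$ are not "absorbed" by the derivatives of larger partitions), hence decomposable, contradicting (S). To engineer this, I would use the combinatorial description of supports: by M-convexity plus \cref{prop:radorev}, the partitions appearing in $\partial^\alpha f$ are obtained by Rado-type subtraction, and incomparability of $\mu,\nu$ lets me find a direction in which they separate.

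The main obstacle I anticipate is ensuring that the chosen derivative $\partial^\alpha$ \emph{actually} produces a clean decomposition—i.e., that no third partition $\rho \in \supp_m(f)$ survives and "bridges" the two variable blocks. Handling this carefully will likely require using maximality of $\mu$ and $\nu$ together with the structure of dominance order: any $\rho$ with $c_\rho \neq 0$ satisfies $\rho \preceq \mu'$ for some maximal $\mu'$, and I'd want to argue that after differentiating to kill the top-degree obstructions, only the two extremal contributions remain in the relevant variable sets. An alternative, possibly cleaner route is to pass to the normalization $N(f)$ and argue at the level of two variables via the Hessian condition (H): restrict $f$ to two suitably grouped variables, differentiate down to degree two, and show that incomparable maxima force either a $2\times 2$ zero block (violating indecomposability after grouping) or a positive-definite $2\times 2$ minor (violating (H) via \cref{prop:signature-from-minors}). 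I would try the support/indecomposability argument first since it avoids eigenvalue computations, falling back to the Hessian approach if the bookkeeping on surviving partitions becomes unwieldy.
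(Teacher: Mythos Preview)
Your plan overlooks the direct route the paper takes, and the route you do sketch has a real gap you yourself flag but do not close.

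The paper's argument never touches indecomposability or Hessians. It uses only the fact that $\supp(f)$ is $M$-convex (a standard consequence of Lorentzianity). Given two distinct maximal elements $\lambda,\mu$ of $\supp_m(f)$, pick the least $k$ with $\sum_{i\le k}\lambda_i>\sum_{i\le k}\mu_i$, so $\lambda_k>\mu_k$. The $M$-convex exchange axiom then produces $j$ with $\mu_j>\lambda_j$ and both $\lambda-e_k+e_j$ and $\mu+e_k-e_j$ in $\supp(f)$. But if $j<k$ then $\lambda-e_k+e_j\succ\lambda$, and if $j>k$ then $\mu+e_k-e_j\succ\mu$; either way a maximal element is strictly dominated by another element of $\supp_m(f)$, a contradiction. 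That is the whole proof. You mention $M$-convexity in passing, but only as a tool to describe which monomials survive differentiation, not as the engine of the argument.

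By contrast, your primary plan---choose $\alpha$ so that $\partial^\alpha m_\mu$ and $\partial^\alpha m_\nu$ land in disjoint variable sets while every other $\partial^\alpha m_\rho$ vanishes---is not obviously achievable, and you correctly identify the obstacle without resolving it. There is no reason to expect that a single $\alpha$ kills every $\rho\in\supp_m(f)\setminus\{\mu,\nu\}$ while keeping both $\mu$ and $\nu$ alive and separated; partitions $\rho$ lying below both $\mu$ and $\nu$ will typically survive and bridge your two blocks. Indeed, condition (S) by itself is weaker than $M$-convexity of the support, so an argument that uses only (S) should not be expected to succeed here. Your fallback via (H) and $2\times2$ minors is closer in spirit to what actually works, but once you are computing those minors you are essentially reproving that the support is $M$-convex---at which point you may as well invoke $M$-convexity directly and run the exchange-axiom argument above.
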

    \begin{proof}
        Suppose for the sake of contradiction that there are two distinct maximal elements $\lambda$ and $\mu$. Since $\mu \not\succeq \lambda$, we have $\lambda_1 + \dots + \lambda_k > \mu_1 + \dots + \mu_k$ for some $k$. By picking the smallest such $k$, we have $\lambda_k > \mu_k$.
    
        Since $f$ is Lorentzian, $\supp(f)$ is M-convex, so there exists some $j$ such that $\mu_j > \lambda_j$ and $\lambda - e_k + e_j$ and $\mu + e_k - e_j$ are in $\supp(f)$. However, we claim that the maximality of $\lambda$ and $\mu$ means that no such $j$ can exist.
    
        First, if $j < k$, then we claim that $\lambda - e_k + e_j \succ \lambda$, so $\lambda$ is not maximal. Indeed, for any $r$ such that $1 \leq r < j$ or $k \leq r \leq \ell(\lambda)$, $\sum_{i=1}^r (\lambda - e_k + e_j)_i = \sum_{i=1}^r \lambda_i$. Moreover, for any $j \leq r < k$, $\sum_{i=1}^r (\lambda -e_k + e_j)_i = \sum_{i=1}^r\lambda_i + 1 > \sum_{i=1}^r\lambda_i$. Thus, $\lambda$ is not maximal. On the other hand, if $j > k$, then by the same logic, $\mu + e_k - e_j \succ \mu$, so $\mu$ is not maximal.
    
        Thus, no such $j$ can exist, so $f$ does not have M-convex support, which contradicts our assumption that $f$ is Lorentzian.
    \end{proof}

    To prove \cref{thm:sym-lor-supp-sufficiency}, it just remains to show that all partitions below the unique maximal element are also in the $m$-support of $f$.
    
    \begin{proof}[Proof of \cref{thm:sym-lor-supp-sufficiency}]
        By \cref{lem:sym-lorentzian-unique-maxl}, we know that if $f$ is Lorentzian, then $\supp_m(f)$ has a unique maximal element $\lambda$, so $\supp_m(f) \subseteq [1^d,\lambda]$. It remains to show that if $\mu \preceq\lambda$, then $\mu\in\supp_m(f)$.
    
        Suppose $f$ is a symmetric function of degree $d$ on $n$ variables, and let $\lambda = (\lambda_1 \geq \lambda_2 \geq \dots \geq \lambda_n) \vdash d$. As in \Cref{def:permutohedron}, for any $\sigma\in S_n$, let $\lambda^\sigma$ denote the composition $\lambda^\sigma = (\lambda_{\sigma(1)},\dots,\lambda_{\sigma(n)})$. Since $c_\lambda \neq 0$, $\lambda^\sigma \in \supp(f)$ for all $\sigma \in S_n$. Since $\supp(f)$ is M-convex, this implies that $\supp(f) \supseteq P(\lambda) \cap \ZZ^n$.

        Moreover, by \cref{prop:radorev}, if we have a partition $\mu\vdash d$ such that $\mu \preceq\lambda$ in dominance order, then $\mu \in P(\lambda) \cap \ZZ^n$. Hence $\mu \in \supp(f)$, and so $\mu \in \supp_m(f)$ whenever $\mu \preceq \lambda$, as desired.
    \end{proof}

    Following \cite[Theorem 3.10]{BrandenHuh20Lorentzian}, this gives us exactly the set of all $m$-supports of Lorentzian symmetric functions and polynomials.
    \begin{theorem}
        Let $J \subseteq \{\mu : \mu\vdash d\}$ be a set of partitions of $d$. The following are equivalent.
        \begin{enumerate}[(i)]
            \item There is a Lorentzian symmetric function whose $m$-support is $J$.
            \item There is a Lorentzian symmetric polynomial on $n \geq d$ variables whose $m$-support is $J$.
            \item $J$ is an interval $[1^d, \lambda]$ in dominance order.
            \item $J$ is the $m$-support of some Schur polynomial $s_\lambda.$ 
        \end{enumerate}
        Furthermore, if we define $S_n\cdot J := \bigcup_{\sigma\in S_n} \{\mu^\sigma : \sigma \in S_n\}$, then these are also equivalent to
        \begin{enumerate}
            \item[(v)] $S_n \cdot J$ is the set of integer points of a permutohedron $P(\lambda)$.
        \end{enumerate}
    \end{theorem}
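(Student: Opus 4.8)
The plan is to establish the cycle of implications $(i)\Rightarrow(ii)\Rightarrow(iii)\Rightarrow(iv)\Rightarrow(i)$ and then to close the loop with a separate argument for $(iii)\Leftrightarrow(v)$. The implication $(ii)\Rightarrow(iii)$ is exactly \cref{thm:sym-lor-supp-sufficiency}, so nothing new is required there. For $(iii)\Rightarrow(iv)$, I would invoke the classical fact that the Kostka number $K_{\lambda,\mu}$ is nonzero if and only if $\mu\preceq\lambda$, together with $K_{\lambda,\lambda}=1$; this gives $\supp_m(s_\lambda)=\{\mu:\mu\preceq\lambda\}=[1^d,\lambda]$, so every interval $[1^d,\lambda]$ is realized as the $m$-support of the Schur polynomial $s_\lambda$. (In fact this shows $(iii)$ and $(iv)$ are literally the same condition on $J$.)

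For $(iv)\Rightarrow(i)$, I would take the normalized Schur function $\widetilde{s}_\lambda=N(s_\lambda)$. By the theorem that (normalized) Schur polynomials are Lorentzian in every number of variables \cite{huh2022schurlorentzian}, $\widetilde{s}_\lambda$ is a Lorentzian symmetric function; and since $N$ merely rescales each monomial-basis coefficient by a positive constant $1/\mu!$, we have $\supp_m(\widetilde{s}_\lambda)=\supp_m(s_\lambda)=J$. For $(i)\Rightarrow(ii)$, suppose $f=\sum_{\mu\vdash d}c_\mu m_\mu$ is a Lorentzian symmetric function with $\supp_m(f)=J$. For any $n\geq d$ the specialization $f_n$ is Lorentzian by \cref{def:lorentzian-symmetric-function}, and because every $\mu\vdash d$ satisfies $\ell(\mu)\leq d\leq n$, the polynomials $m_\mu(x_1,\dots,x_n)$ for $\mu\vdash d$ are all nonzero and linearly independent, so $\supp_m(f_n)=J$ as well. (Going around the cycle then shows a realizing polynomial exists for \emph{every} $n\geq d$, so the reading of $(ii)$ does not matter.)

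It remains to prove $(iii)\Leftrightarrow(v)$, which is where Rado's theorem enters; fix $n\geq d$. Given $J=[1^d,\lambda]$ with $\lambda\vdash d$, I would show $S_n\cdot J=P(\lambda)\cap\ZZ^n$: by \cref{prop:rado}, a weak composition $t$ of $d$ lies in $P(\lambda)$ if and only if its decreasing rearrangement $[t]$ satisfies $[t]\preceq\lambda$, so $P(\lambda)\cap\ZZ^n=\{t\in\ZZ^n_{\geq 0}: |t|=d,\ [t]\preceq\lambda\}=\{\mu^\sigma:\mu\preceq\lambda,\ \sigma\in S_n\}=S_n\cdot J$ (using $\ell(\mu)\leq d\leq n$ for all $\mu\preceq\lambda$). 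Conversely, if $S_n\cdot J=P(\lambda)\cap\ZZ^n$ for some $\lambda$, then since the common set is a nonempty set of weak compositions of $d$ lying in one coordinate-sum hyperplane, $\lambda\vdash d$ with $\ell(\lambda)\leq n$; the partitions contained in $S_n\cdot J$ are on one hand $J$ itself (it is a union of $S_n$-orbits, each containing a unique partition) and on the other hand $\{\mu\vdash d:\mu\preceq\lambda\}$ by \cref{prop:radorev}, whence $J=[1^d,\lambda]$.

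I do not expect a genuine obstacle here: each implication reduces to either the already-proven \cref{thm:sym-lor-supp-sufficiency}, the standard nonvanishing criterion for Kostka numbers, the theorem that normalized Schur polynomials are Lorentzian \cite{huh2022schurlorentzian}, or Rado's characterization of the lattice points of a permutohedron (\cref{prop:rado,prop:radorev}). The only points demanding care are bookkeeping: keeping $n\geq d$ so that all partitions of $d$ remain visible after specialization and have $\ell(\mu)\leq n$, and observing that normalization does not change the $m$-support.
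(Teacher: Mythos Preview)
Your proof is correct and follows essentially the same cycle as the paper: $(i)\Rightarrow(ii)$ by specialization, $(ii)\Rightarrow(iii)$ via \cref{thm:sym-lor-supp-sufficiency}, $(iii)\Leftrightarrow(iv)$ via the Kostka nonvanishing criterion, and $(iii)\Leftrightarrow(v)$ via Rado. The one substantive difference is how you close the loop back to $(i)$. You invoke the theorem of \cite{huh2022schurlorentzian} that normalized Schur polynomials are Lorentzian, whereas the paper instead exhibits the more elementary witness $f=\sum_{\mu\preceq\lambda}\widetilde{m}_\mu$ and observes that $f_n$ is the exponential generating function of the M-convex set $P(\lambda)\cap\ZZ^n$, hence Lorentzian by \cite[Theorem~3.10]{BrandenHuh20Lorentzian}. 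Your route is shorter to write but imports a much heavier result (the proof in \cite{huh2022schurlorentzian} uses algebraic geometry), while the paper's choice keeps the argument self-contained within the elementary Lorentzian toolkit and, as a bonus, makes the link to $(v)$ transparent since the same permutohedron appears. Either argument is fine; just be aware of the trade-off in what you are assuming.
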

    \begin{proof}
        We note that (i) $\Rightarrow$ (ii) follows immediately from the definition of Lorentzian symmetric functions, while (ii) $\Rightarrow$ (iii) follows from \cref{thm:sym-lor-supp-sufficiency}. We also note that $\supp_m(s_\lambda) = [1^d, \lambda]$, so (iii) and (iv) are equivalent. Thus, to prove the equivalence of (i) - (iv), it just remains to show that if $J = [1^d, \lambda]$, then there exists a Lorentzian symmetric function $f \in \Lambda^d$ with $\supp_m(f) = J$.

        To show this, we claim that for any $\lambda\vdash d$, $f = \sum_{\mu \preceq \lambda} \til{m}_\mu$ is a Lorentzian symmetric function. Indeed, by the argument in the proof of \cref{thm:sym-lor-supp-sufficiency}, $\supp(f_n) = \conv\{\lambda^\sigma : \sigma \in S_n\} \cap \ZZ^n$, which is exactly the integer points of a (generalized) permutahedron. Moreover, the coefficient of $x^\alpha$ in $f_n$ is exactly $\frac{1}{\alpha!}$, so $f_n$ is the generating function of an M-convex set, and hence Lorentzian by \cite[Theorem 3.10]{BrandenHuh20Lorentzian}.
        
        Finally, note that (iii) $\Leftrightarrow$ (v) follows easily from \cref{prop:radorev}.
    \end{proof}

    We note that for any $\lambda \vdash d$, $\supp_m(s_\lambda) = \supp_m(N(s_\lambda)) = [1^d,\lambda]$, so if $f$ is (normalized) Schur positive with a unique maximal element in its (normalized) Schur expansion, then it has M-convex support.
    
    In \cref{subsec:symmetric-quadratics}, we show that normalized Schur positivity is equivalent to Lorentzianity in degree two. However, for higher degrees, (normalized) Schur positivity is in general neither a necessary nor sufficient condition for Lorentzianity. In \cref{subsec:symmetric-cubics}, we show that starting in degree three, normalized Schur positivity is neither necessary nor sufficient.
    
    We also know that Schur positivity is not a sufficient condition because, in general, $s_\lambda$ is not Lorentzian, even in degree two \cite[Example 8]{huh2022schurlorentzian}. On the other hand, the normalized Schur polynomial $Ns_{33}$ is Lorentzian but not Schur positive, so Schur positivity is also not a necessary condition.

\subsection{Topology of the Space of Lorentzian symmetric Polynomials}\label{subsec:euclidean-ball}
In \cite{branden2021euclideanball}, \citeauthor{branden2021euclideanball} shows that the projective space of Lorentzian polynomials is homeomorphic to a closed Euclidean ball. In this section, we make a small adaptation to \citeauthor{branden2021euclideanball}'s proof in order to show that the space of Lorentzian symmetric polynomials is also homeomorphic to a closed Euclidean ball.
\begin{theorem}[{cf. \cite[Theorem 3.1]{branden2021euclideanball}}]\label{thm:order-two-gen-subgrp-euclidean-ball}
    Let $S \subseteq S_n$ be any subgroup of the symmetric group generated by elements of order $2$. Let $\underline{L}_S^d$ denote the set of multiaffine Lorentzian degree $d$ polynomials that are invariant under $S$. Identify $\PP \underline{L}_S^d$ with $\{f\in \underline{L}_S^d : f(1,\dots, 1) = 1\}$. Then $\PP\underline{L}_S^d$ is homeomorphic to a closed Euclidean ball.
\end{theorem}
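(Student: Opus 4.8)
The plan is to run Brändén's proof of \cite[Theorem 3.1]{branden2021euclideanball} (that $\PP\underline{L}^d_n$ is a closed Euclidean ball) while keeping track of the $S$-action throughout. Let $V = \RR[x_1,\dots,x_n]^{\mathrm{ma}}_d$ be the finite-dimensional space of multiaffine homogeneous degree-$d$ polynomials; the group $S_n$ acts linearly on $V$ by permuting variables, the set $\underline{L}^d_n \subseteq V$ is $S_n$-invariant, and for a subgroup $S \le S_n$ we have $\underline{L}^d_S = \underline{L}^d_n \cap V^S$, where $V^S$ is the fixed subspace — a linear subspace of $V$. The normalized elementary symmetric polynomial $\til{e}_d := e_d / e_d(1,\dots,1)$, which plays the role of a distinguished interior ``center'' in Brändén's argument, is symmetric and so lies in $V^S$; moreover one checks directly — every iterated partial derivative of $e_d$ down through degree $2$ is a nondegenerate complete-graph quadratic, so conditions (S) and (H) of \cref{def:lorentzian} hold strictly — that $\til{e}_d$ lies in the interior of $\PP\underline{L}^d_n$. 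Since the interior of a set meets a linear subspace inside the relative interior of the intersection, $\til{e}_d$ lies in the relative interior of $\PP\underline{L}^d_S$; in particular $\PP\underline{L}^d_S$ is nonempty (cf.\ \cref{prop:specialization-descending-chain}) and full-dimensional in the affine hyperplane $\{g \in V^S : g(1,\dots,1) = 1\}$.

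Brändén builds the ball structure from this center together with a deformation retracting $\PP\underline{L}^d_n$ onto $\til{e}_d$ and continuity estimates identifying the resulting radial-type map onto a ball as a homeomorphism. The deformation is driven by auxiliary geometric data (an inner product, a Lyapunov-type function, the induced flow) which — being merely auxiliary, not required to preserve Lorentzianity — may be chosen $S_n$-invariant by averaging; the resulting flow is then $S_n$-equivariant, hence restricts to the subspace $V^S$. As the tracks of the deformation, the continuity estimates, the injectivity of the time-$t$ maps for $t < 1$, and the inward push of the boundary are all inherited on $V^S$, and $\til{e}_d$ lies in the relative interior of $\PP\underline{L}^d_S$, Brändén's conclusion applies verbatim inside $V^S$ and shows $\PP\underline{L}^d_S$ is homeomorphic to a closed Euclidean ball of dimension $\dim V^S - 1$. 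The hypothesis that $S$ is generated by order-$2$ elements $\tau_1,\dots,\tau_r$ is what makes this descent uniform: it lets one perform the restriction one involution at a time, each step intersecting a ``Lorentzian ball'' with the fixed hyperplane of a single linear involution, and it is also the condition under which the structural facts about $S$-symmetric M-convex supports (in the spirit of \cref{thm:sym-lor-supp-sufficiency}) that Brändén's argument requires are available.

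The main obstacle is precisely this last point: one must isolate from \cite{branden2021euclideanball} the exact list of properties of $\PP\underline{L}^d_n$ that its proof consumes (existence of an interior point carrying a compatible contraction, plus whatever is needed about M-convexity of supports), verify that each survives replacing $V$ by the invariant subspace $V^S$, and in doing so pin down where the order-$2$ hypothesis is genuinely required. The cleanest organization is to extract an abstract statement of the form ``an interior point of a compact set in a Euclidean space together with a compatible $S$-equivariant family of Lorentzianity-preserving contractions yields a ball whose $S$-fixed set is again a ball,'' and then apply it. Finally, one should note separately the degenerate low-dimensional cases — for instance $S = S_n$, where $V^{S_n} = \RR\,e_d$ and $\PP\underline{L}^d_{S_n}$ is a single point, trivially a $0$-ball — although these are subsumed by the general argument.
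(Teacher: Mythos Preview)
Your high-level strategy --- restrict Br\"and\'en's contractive flow to the $S$-invariant subspace $V^S$ and then invoke the contractive-flow ball criterion --- is exactly what the paper does. But your proposal misidentifies the nature of that flow in a way that matters.

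Br\"and\'en's flow is not driven by ``auxiliary geometric data (an inner product, a Lyapunov-type function, \dots)'' that one is free to average into $S_n$-invariance. It is the specific one-parameter family of linear operators $T_s = e^s e^{-s\mathcal{L}}$, where $\mathcal{L} = \tfrac{1}{\binom{n}{2}}\sum_\tau \tau$ is the average over all transpositions acting on $V$. The whole point is that $T_s$ is Lorentzian-preserving and pushes $\PP\underline{L}^d_n$ into its own interior; these are delicate facts, and averaging a Lorentzian-preserving operator over a group action need not yield a Lorentzian-preserving operator. So ``average to get equivariance'' is not a safe move.

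What the paper observes instead is that $\mathcal{L}$, being a sum over \emph{all} transpositions, already commutes with every transposition (conjugation permutes the summands). A generator $\sigma$ of $S$ has $\sigma^2=\mathrm{id}$, so $\sigma$ is a product of disjoint transpositions, whence $\sigma T_s = T_s \sigma$. Thus $T_s$ maps $V^S$ to itself with no averaging needed, and one concludes $T_s(\PP\underline{L}^d_S) \subseteq \Int(\PP\underline{L}^d_n)\cap V^S \subseteq \Int(\PP\underline{L}^d_S)$, which is precisely the hypothesis of the ball lemma. This is where the order-$2$ hypothesis enters --- not via any ``descent one involution at a time,'' and not via any facts about $S$-symmetric M-convex supports (Br\"and\'en's argument uses no support analysis at all, so that part of your proposal is a red herring). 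Incidentally, since conjugation by any $\sigma\in S_n$ permutes the transpositions, $\mathcal{L}$ in fact commutes with all of $S_n$, so the argument goes through for arbitrary subgroups; the order-$2$ hypothesis is stronger than what the proof actually consumes.
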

\begin{proof}
    Borrowing the notation from \cite{branden2021euclideanball}, let $\underline{L}_n^d$ denote the set of multiaffine Lorentzian polynomials of degree $d$ on $n$ variables, let $\mathcal{L}$ be the linear operator defined by $\mathcal{L} = \frac{1}{\binom{n}{2}}\sum_{\tau} \tau$, where the sum runs over all transpositions $\tau$, and define $T_s = e^s e^{-s\mathcal{L}}$ for $s \geq 0$. By \cite[Lemma 2.4]{branden2021euclideanball}, we know that $T_s(\PP\underline{L}_n^d) \subseteq \Int(\PP\underline{L}_n^d)$ for all $s > 0$.

    We claim that for all $s > 0$ and $f \in \PP\underline{L}_S^d$, we have $T_s(f) \in \PP\underline{L}_S^d$. By \cite[Proposition 2.3]{branden2021euclideanball}, we know that $T_s(f) \in \PP\underline{L}_n^d$, so it just remains to show that $T_s(f)$ is invariant under $S$. Indeed, let $\sigma$ be a generator of $S$. By assumption $\sigma^2 = \operatorname{id}$, so $\sigma = \tau_1\cdot\dots\cdot\tau_\ell$ for some disjoint transpositions $\tau_1,\dots,\tau_\ell$. We note that for any transposition $\tau$, we have $\tau \mathcal{L} = \mathcal{L}\tau$, so
    \[\sigma T_s(f) = T_s(\tau_\ell \cdot \dots \cdot \tau_1 f) = T_s(\sigma f) = T_s(f),\]
    so $T_s(f)$ is invariant under $S$, as desired.

    In particular, for any $s> 0$, we have $T_s(\PP\underline{L}_S^d) \subseteq \Int(\PP\underline{L}_n^d) \cap \PP\underline{L}_S^d$. Since $\PP \underline{L}_S^d = \PP\underline{L}_n^d \cap H$ for some hyperplane $H$, we have $\Int(\PP \underline{L}_S^d) \supseteq \Int(\PP\underline{L}_n^d) \cap H = \Int(\PP\underline{L}_n^d) \cap \PP\underline{L}_S^d$. Therefore, $T_s(\PP\underline{L}_S^d) \subseteq \Int(\PP\underline{L}_S^d)$ for all $s > 0$, which implies that $\PP \underline{L}_S^d$ is homeomorphic to a closed Euclidean ball by the contractive flow argument in \cite[Theorem 3.1]{branden2021euclideanball} and \cite[Lemma 2.3]{galashin2018contractiveflow}.
\end{proof}
\begin{theorem}\label{thm:proj sym lor=ball}
    For all $n,d$, the projective space of Lorentzian symmetric polynomials of degree $d$ on $n$ variables is homeomorphic to a closed Euclidean ball.
\end{theorem}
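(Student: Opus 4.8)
The plan is to deduce this from the multiaffine case \cref{thm:order-two-gen-subgrp-euclidean-ball} via polarization. Recall the polarization operator $\Pi$, which sends a homogeneous degree-$d$ polynomial $f(x_1,\dots,x_n)$ to the unique multiaffine polynomial $\Pi(f)$ in the $nd$ variables $\{y_{ij}: i\in[n],\ j\in[d]\}$ that is symmetric within each block $\{y_{i1},\dots,y_{id}\}$ and satisfies $\Pi(f)\big|_{y_{ij}=x_i}=f$. Diagonalization $y_{ij}\mapsto x_i$ is a two-sided inverse, so $\Pi$ is a linear isomorphism from $\RR[x_1,\dots,x_n]_d$ onto the space of multiaffine degree-$d$ polynomials in the $y_{ij}$ that are invariant under the base group $(S_d)^n$, the $i$-th factor permuting the second index inside block $i$.

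First I would isolate two compatibility facts. Polarization preserves the Lorentzian property --- $f$ is Lorentzian if and only if $\Pi(f)$ is \cite{BrandenHuh20Lorentzian} --- and $f$ is symmetric in $x_1,\dots,x_n$ if and only if $\Pi(f)$ is moreover invariant under permuting blocks. The latter follows from the uniqueness in the definition of $\Pi$: for $\sigma\in S_n$, relabelling the blocks of $\Pi(f)$ by $\sigma$ yields a multiaffine, block-symmetric polynomial that diagonalizes to $f(x_{\sigma(1)},\dots,x_{\sigma(n)})$, so it coincides with $\Pi(f)$ precisely when $f$ is $\sigma$-invariant. Let $G\le S_{nd}$ denote the imprimitive wreath product $S_d\wr S_n$ acting on $[n]\times[d]$, that is, the base $(S_d)^n$ together with the top $S_n$ permuting blocks. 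Combining the two facts, $\Pi$ restricts to a linear isomorphism from the space of Lorentzian symmetric polynomials of degree $d$ on $n$ variables onto $\underline{L}_G^d$, the space of $G$-invariant multiaffine Lorentzian polynomials of degree $d$ on $nd$ variables. Since $\Pi(f)(1,\dots,1)=f(1,\dots,1)$, this isomorphism carries the normalizing slice $\{f:f(1,\dots,1)=1\}$ to the corresponding slice, and a linear isomorphism between finite-dimensional real vector spaces is a homeomorphism; hence the projective space of Lorentzian symmetric polynomials of degree $d$ on $n$ variables is homeomorphic to $\PP\underline{L}_G^d$.

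It then suffices to observe that $G=S_d\wr S_n$ is generated by elements of order $2$, so that \cref{thm:order-two-gen-subgrp-euclidean-ball} applies with $n$ replaced by $nd$ and $S$ by $G$. This is elementary: the adjacent transpositions swapping $(i,j)$ and $(i,j+1)$ generate the base group $(S_d)^n$, and the block swaps $\prod_{j=1}^d\big((i,j)\leftrightarrow(i+1,j)\big)$ --- each a product of $d$ disjoint transpositions, hence an involution --- together with the base group generate the full wreath product. Applying \cref{thm:order-two-gen-subgrp-euclidean-ball} shows $\PP\underline{L}_G^d$ is homeomorphic to a closed Euclidean ball, completing the proof.

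The only nontrivial ingredients are \cref{thm:order-two-gen-subgrp-euclidean-ball} itself and the quoted fact that polarization preserves Lorentzianity; everything else is bookkeeping. The step most likely to need care is verifying the precise interplay between symmetry of $f$ and the wreath-product action on $\Pi(f)$, and confirming that $S_d\wr S_n$ is generated by involutions, but I do not expect a genuine obstacle here.
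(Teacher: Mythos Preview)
Your proposal is correct and follows essentially the same approach as the paper: both reduce to \cref{thm:order-two-gen-subgrp-euclidean-ball} via polarization, identifying the relevant subgroup as $S_d\wr S_n$ acting on the $nd$ polarized variables (the paper describes the same group by listing generators without naming it as a wreath product). Your write-up is in fact more careful than the paper's, spelling out why symmetry of $f$ corresponds to $G$-invariance of $\Pi(f)$ and why the normalizing slices match.
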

\begin{proof}
    Consider the polynomial ring $\RR[x_{ij} : 1 \leq i \leq n, 1 \leq j \leq d]$, and let $S$ be the subgroup of the symmetric group generated by the transpositions $(x_{i,j}, x_{i,j'})$ for all $i \in [n], j,j'\in[d]$, as well as the order two elements of the form $\prod_{j=1}^d (x_{i,j}, x_{i',j})$ for all $1 \leq i,i' \leq n$.

    We note that $\underline{L}_S^d$ is exactly the polarizations of Lorentzian symmetric polynomials of degree $d$ in $n$ variables. Since polarization is a linear map, the set of Lorentzian symmetric polynomials is homeomorphic to $\underline{L}_S^d$, so the projective space of Lorentzian symmetric polynomials is homeomorphic to $\PP \underline{L}_S^d$ and therefore homeomorphic to a closed Euclidean ball by \cref{thm:order-two-gen-subgrp-euclidean-ball}.
\end{proof}

Similar to the projective space of Lorentzian polynomials, this ball is also naturally stratified by support. Specifically, we get one stratum for each $\lambda\vdash d$, consisting of Lorentzian symmetric polynomials whose $m$-support is $[1^d,\lambda]$.

\subsection{Hessians of Symmetric Polynomials}\label{subsec:reductions}
In this section, we examine the structures appearing in the Hessians of quadratic derivatives of symmetric polynomials. Specifically, these Hessians will have a very particular block matrix structure that makes it easy to explicitly compute some of their eigenvalues and greatly simplifies computing their signatures. The results in this section underpin our characterizations of low-degree Lorentzian symmetric polynomials in \cref{sec:small degree} as well as our simplifications in \cref{sec:simplifying-results}.

Let $f$ be a homogeneous symmetric polynomial of degree $d$ with nonnegative coefficients. We study the Lorentzian property of such polynomials $f$ by their $\widetilde{m}_\alpha$-expansions:
\[
    f=\sum_{\alpha\vdash d} c_\alpha \widetilde{m}_\alpha.
\]
Since $f$ is symmetric, we claim it suffices to analyze quadratic forms associated to all \emph{partitions} of $d-2$, rather than needing to study all compositions as we would for general Lorentzian polynomials. Indeed, we know that reordering the variables preserves Lorentzianity, and we note that for any composition $\gamma \in \ZZ_{\geq 0}^n$ and permutation $\sigma \in S_n$, we have
\[\dl^{\sigma \cdot \gamma} f = \dl^{\sigma \cdot \gamma}(\sigma\inv\cdot f) = \sigma\inv \cdot \dl^\gamma f,\]
where $\sigma\cdot\gamma = (\gamma_{\sigma(1)},\dots,\gamma_{\sigma(n)})$ and $\sigma\inv \cdot f = f(x_{\sigma\inv(1)},\dots,x_{\sigma\inv(n)})$.
Since permuting the variables does not affect the signature of the Hessian, this means that $\nabla^2 \dl^{\sigma\cdot\gamma} f$ has the same signature as $\nabla^2 \dl^{\gamma}f$. In particular, any composition of $d-2$ is a permutation of some partition $\mu\vdash d-2$, so it suffices to check that $\nabla^2 \dl^\mu f$ has Lorentzian signature for all $\mu\vdash d-2$.

We will find that for such quadratics, each Hessian is a block matrix where each off diagonal block is a scaled all 1's matrix and each diagonal block will be of the form
\[\begin{pmatrix}
    p & q & \cdots & q\\
    q & p & \ddots & \vdots \\
    \vdots & \ddots & \ddots  & q\\
    q & \cdots & q & p
\end{pmatrix}.\]
We call symmetric matrices with such a block structure \emph{Haynsworth matrices}. In this section, we show that Hessians of quadratic derivatives of symmetric polynomials are Haynsworth matrices, and that Haynsworth matrices are particularly amenable to analyzing the eigenvalue signature.
\begin{definition}
    Let $\Gamma=(P_1,\ldots, P_k)$ be a collection of intervals that partitions $[n]$. A symmetric matrix $M\in \RR^{n\times n}$ is \emph{Haynsworth of type }$\Gamma$ if $M \in \spanset(\{\mathbf{1}_{P_i}\mathbf{1}_{P_j}^T + \mathbf{1}_{P_j}\mathbf{1}_{P_i}^T: i,j=1,\dots,k\} \cup \{\diag(\mathbf{1}_{P_j}) : j = 1,\dots, k\})$.
\end{definition}

We claim that if $f \in \Lambda_n^d$, then for any $\mu \vdash d-2$, $\nabla^2 \dl^{\mu} f$ is such a Haynsworth matrix. At the heart of this fact is the following lemma.

\begin{lemma}\label{lem:composition-equivalence}
    Let $\mu,\nu \in \ZZ_{\geq 0}^n$ be compositions, and let $[\mu],[\nu]$ be the partitions obtained by ordering the parts of $\mu$ and $\nu$, respectively. If $[\mu] = [\nu]$ and $\mu_i = \nu_j$, then $[\mu + e_i] = [\nu + e_j]$.
\end{lemma}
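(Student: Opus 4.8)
The plan is to track how the multiset of entries of a composition changes when we add a single standard basis vector, and to observe that the hypotheses force the two compositions to undergo the \emph{same} such change. Write $v := \mu_i = \nu_j$. The starting point is that the partition $[\mu]$ is, by definition, nothing more than the multiset of entries $\lbbrace \mu_1,\dots,\mu_n\rbbrace$ written in weakly decreasing order; in particular the hypothesis $[\mu]=[\nu]$ says precisely that $\mu$ and $\nu$ have the same multiset of entries, which I will call $M$.

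Next I would compute the effect of the increment. Passing from $\mu$ to $\mu+e_i$ changes only the $i$th entry, from $v$ to $v+1$, and leaves every other entry fixed; at the level of multisets this deletes one copy of $v$ (which exists since $\mu_i=v$) and inserts one copy of $v+1$. Hence the multiset of entries of $\mu+e_i$ is $(M\setminus\lbbrace v\rbbrace)\uplus\lbbrace v+1\rbbrace$. Running the identical computation for $\nu$ at index $j$, using $\nu_j=v$ and the fact that $\nu$ also has multiset of entries $M$, shows that the multiset of entries of $\nu+e_j$ is the same multiset $(M\setminus\lbbrace v\rbbrace)\uplus\lbbrace v+1\rbbrace$. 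Since $[\mu+e_i]$ and $[\nu+e_j]$ are both just this common multiset arranged in weakly decreasing order, they coincide, which is the claim.

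There is essentially no obstacle here; the lemma is a bookkeeping statement about multisets. The only point requiring a little care is keeping the multiplicities straight — in particular that the argument is insensitive to whether $v$ or $v+1$ already appear among the other entries of $\mu$ and $\nu$ — so I would phrase the whole argument in the language of multisets (deletion and insertion of single elements) rather than manipulating the sorted sequences $[\mu]$ and $[\nu]$ entry by entry, where spurious off‑by‑one indexing issues could otherwise arise.
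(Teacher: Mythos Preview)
Your proof is correct and follows essentially the same approach as the paper's own argument: both translate the hypothesis $[\mu]=[\nu]$ into equality of multisets of entries, observe that adding $e_i$ (resp.\ $e_j$) removes one copy of $v=\mu_i=\nu_j$ and inserts one copy of $v+1$, and conclude that the resulting multisets coincide. The paper's version is slightly terser but there is no substantive difference in method.
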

\begin{proof}
    We note that $[\mu] = [\nu]$ if and only if their associated multisets $\lbbrace\mu\rbbrace$ and $\lbbrace\nu\rbbrace$ are equal. Further, we note that the only difference between $\lbbrace \mu\rbbrace$ and $\lbbrace \mu + e_i\rbbrace$ is that $\mu_i$ appears one fewer time while $\mu_i + 1$ appears one more time. Hence, we have
    \begin{align*}
        \lbbrace \mu + e_i\rbbrace = \lbbrace \mu\rbbrace \setminus \{\mu_i\} \cup \{\mu_i + 1\} = \lbbrace \nu\rbbrace \setminus \{\nu_j\} \cup \{\nu_j + 1\} = \lbbrace \nu + e_j\rbbrace
    \end{align*}
    and so $[\mu + e_i] = [\nu + e_j]$, as desired.
\end{proof}

\begin{lemma}~\label{lem:haynsworth blocks}
    Given $f\in\Lambda^d_n$ with coefficients $c_\alpha$ in the $\widetilde{m}_\alpha$-basis and a partition $\mu\vdash d-2$, the Hessian associated to the $\partial^\mu f$ is a Haynsworth matrix $H$, where the blocks of the Haynsworth matrix correspond to the blocks in the set partition of $[n]$ defined by $\mu$.
\end{lemma}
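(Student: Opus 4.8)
The plan is to compute the Hessian entrywise in the normalized monomial basis, read off that each entry is a single coefficient $c_\lambda$, and then show that which coefficient appears depends on the index pair $(i,j)$ only through the blocks of the set partition of $[n]$ induced by $\mu$.

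First I would note that since $|\mu| = d-2$, the polynomial $\dl^\mu f$ is homogeneous of degree $2$, so $H := \nabla^2 \dl^\mu f$ is a constant symmetric $n\times n$ matrix with $H_{ij} = \dl^{\mu + e_i + e_j} f$. The next step is a short computation in the normalized monomial basis: for any composition $\gamma$ with $|\gamma| = d$ and any partition $\alpha \vdash d$, writing $\widetilde m_\alpha = \frac{1}{\alpha!}\sum_{\beta : [\beta] = \alpha} x^\beta$ and using $\dl^\gamma x^\beta = \beta!\,\mathbbm{1}[\gamma = \beta]$ when $|\gamma| = |\beta|$ together with $\beta! = \alpha!$ whenever $[\beta] = \alpha$, we get $\dl^\gamma \widetilde m_\alpha = \mathbbm{1}[[\gamma] = \alpha]$. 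Applying this with $\gamma = \mu + e_i + e_j$ to $f = \sum_{\alpha\vdash d} c_\alpha \widetilde m_\alpha$ yields $H_{ij} = c_{[\mu + e_i + e_j]}$.

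The heart of the argument is to show that $c_{[\mu + e_i + e_j]}$ depends only on the blocks $P_s \ni i$ and $P_t \ni j$ of $\Gamma$ together with whether $i = j$, and for this it suffices to show the partition $[\mu + e_i + e_j]$ itself has this property. For the off-diagonal case, suppose $i \ne j$, $i' \ne j'$, with $i,i'$ in a common block (so $\mu_i = \mu_{i'}$) and $j,j'$ in a common block (so $\mu_j = \mu_{j'}$). I would apply \cref{lem:composition-equivalence} twice: from $[\mu] = [\mu]$ and $\mu_i = \mu_{i'}$ it gives $[\mu + e_i] = [\mu + e_{i'}]$; then, since $(\mu + e_i)_j = \mu_j$ and $(\mu + e_{i'})_{j'} = \mu_{j'}$ are equal (using $i \ne j$, $i'\ne j'$, $\mu_j = \mu_{j'}$), a second application gives $[\mu + e_i + e_j] = [\mu + e_{i'} + e_{j'}]$. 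The diagonal case $i = j$ is handled by the same two-step application: if $\mu_i = \mu_{i'}$ then $[\mu + e_i] = [\mu + e_{i'}]$ and $(\mu + e_i)_i = (\mu + e_{i'})_{i'} = \mu_i + 1$, so $[\mu + 2e_i] = [\mu + 2e_{i'}]$. Consequently there are scalars $p_t$, $q_t$, $r_{st}$ with $H = \sum_{s<t} r_{st}(\mathbf{1}_{P_s}\mathbf{1}_{P_t}^T + \mathbf{1}_{P_t}\mathbf{1}_{P_s}^T) + \sum_t q_t\,\mathbf{1}_{P_t}\mathbf{1}_{P_t}^T + \sum_t (p_t - q_t)\diag(\mathbf{1}_{P_t})$, which exhibits $H$ as a Haynsworth matrix of type $\Gamma$.

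The step that needs the most care is the bookkeeping of the block structure: remembering that the indices $i > \ell(\mu)$, where $\mu_i = 0$, form a genuine block of $\Gamma$ to which the argument above applies verbatim, and keeping straight which diagonal blocks receive the $(p_t - q_t)\diag$ correction. Beyond that, the substantive input is \cref{lem:composition-equivalence}, and everything else is routine linear algebra.
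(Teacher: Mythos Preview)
Your proposal is correct and follows essentially the same approach as the paper: compute $H_{ij} = c_{[\mu+e_i+e_j]}$ and then apply \cref{lem:composition-equivalence} twice to show this depends only on the blocks containing $i$ and $j$ and on whether $i=j$. Your write-up is in fact slightly cleaner, since you handle all off-diagonal entries $(i\ne j)$ in one case rather than separating into $s\ne t$ and $s=t$, and you spell out the computation $\dl^\gamma \widetilde m_\alpha = \mathbbm{1}[[\gamma]=\alpha]$ that the paper leaves implicit.
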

\begin{proof}
    We partition $\nabla^2 \dl^\mu f$ into a block matrix
    \[
                M := \nabla^2 \dl^\mu f= \left(
                \begin{array}{c|c|c|c} 
                  A_1 & B_{1,2} & \cdots & B_{1,\ell}\\ 
                  \hline 
                  B_{1,2}^T & A_2 & \ddots & \vdots \\ 
                  \hline 
                   \vdots  & \ddots  & \ddots &B_{\ell-1,\ell} \\ 
                  \hline 
                  B_{1,\ell}^T & \cdots & B_{\ell-1,\ell}^T & A_\ell
                \end{array} 
                    \right)
            \]
    where $i,j \in [n]$ are in the same block of this matrix if and only if $\mu_i = \mu_j$. We wish to show that all $B_{s,t}$ blocks are constant multiples of the all ones matrix, and that each $A_t = p_t I + q_t(\ones - I)$ for some $p_t,q_t$.

    Indeed, first suppose that $(i,j), (i',j') \in B_{s,t}$ for some $s\neq t$. Then $\mu_i = \mu_{i'}$, so by \cref{lem:composition-equivalence}, we have $[\mu + e_i] = [\mu + e_{i'}]$. Since $j,j' \in \beta_t$, we have $\mu_j = \mu_{j'}$, and since $s\neq t$, we know $j \neq i$ and $j' \neq i'$. Hence $(\mu + e_i)_j = \mu_j = \mu_{j'} = (\mu + e_{i'})_{j'}$, and therefore $[\mu + e_i + e_j] = [\mu + e_{i'} + e_{j'}]$ by \cref{lem:composition-equivalence}. Thus,
    \[M_{ij} = \dl^{\mu + e_i + e_j} f = c_{[\mu + e_i + e_j]} = c_{[\mu + e_{i'} + e_{j'}]} = M_{i'j'},\]
    as desired.

    Next, suppose that $(i,j),(i',j') \in A_t$. To prove that all off-diagonal entries of $A_t$ are equal, we need to show that if $i \neq j$ and $i'\neq j'$, then $M_{ij} = M_{i'j'}$. Indeed, we again have $\mu_i = \mu_{i'}$ and $(\mu + e_i)_j = (\mu + e_{i'})_{j'}$, so again by \cref{lem:composition-equivalence}, $[\mu + e_i + e_j] = [\mu + e_{i'} + e_{j'}]$, and hence
    \[M_{ij} = c_{[\mu + e_i + e_j]} = c_{[\mu + e_{i'} + e_{j'}]} = M_{i'j'}.\]
    Finally, if $i = j$ and $i' = j'$, then $[\mu + e_i] = [\mu + e_{i'}]$ and $(\mu + e_i)_i = \mu_i + 1 = \mu_{i'} + 1 = (\mu + e_{i'})_{i'}$ so \cref{lem:composition-equivalence} again applies, and we have
    \[M_{ii} = c_{[\mu + 2e_i]} = c_{[\mu + 2e_{i'}]} = M_{i'i'},\]
    as desired.
\end{proof}

Notably, when Haynsworth matrices carry nontrivial blocks of size greater than one, we can immediately compute a special class of eigenvalues which we call \emph{linear eigenvalues.}
\begin{lemma}~\label{lem:Tracy trick}
    Let $M$ be a Haynsworth matrix with block sizes $n_1,n_2,\ldots,n_\ell$ of the form: 
\[
    M=\left(\begin{array}{c|c|c|c} 
        A_1 & B_{1,2} & \cdots & B_{1,\ell} \\ 
        \hline 
        B_{1,2}^T & A_2 & \ddots & \vdots \\ 
        \hline 
        \vdots  & \ddots  & \ddots &B_{\ell-1,\ell} \\ 
        \hline 
        B_{1,\ell}^T & \cdots & B_{\ell-1,\ell}^T & A_\ell 
    \end{array} 
    \right)
\]
where $A_t= p_t I+q_t(\mathbbm{1}-I)$ and $B_{s,t}=b_{s,t} \mathbbm{1}$. Whenever $n_t \geq 2$, then $p_t - q_t$ is an eigenvalue of $M$ with multiplicity $n_t - 1$, giving us a total of $n - \ell$ linear eigenvalues. Moreover, the signature of the remaining eigenvalues is determined by the signature of the $\ell\times \ell$ matrix $\til{M}$ with entries
\[\til{M}_{st} = \begin{cases}
    n_s^2q_s + n_s(p_s-q_s), & s = t\\
    n_sn_tb_{s,t}, & s \neq t
\end{cases}.\]
\end{lemma}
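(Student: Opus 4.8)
The plan is to exhibit an explicit change of basis block-by-block that simultaneously diagonalizes away the linear eigenvalues and reduces the rest to the $\ell \times \ell$ matrix $\til M$. Inside each block $\beta_t$ of size $n_t$, decompose $\RR^{n_t}$ orthogonally as $\RR \mathbf{1}_{n_t} \oplus \mathbf{1}_{n_t}^\perp$. The key observations are: (i) $A_t = p_t I + q_t(\mathbf{1}-I)$ acts on $\mathbf{1}_{n_t}^\perp$ as $(p_t - q_t)I$ and on $\mathbf{1}_{n_t}$ as multiplication by $p_t + (n_t-1)q_t$; and (ii) each off-diagonal block $B_{s,t} = b_{s,t}\mathbf{1}$ maps $\mathbf{1}_{n_s}^\perp$ to $0$ (since $\mathbf{1}^T v = 0$ for $v \perp \mathbf{1}$) and maps $\mathbf{1}_{n_s}$ into the span of $\mathbf{1}_{n_t}$. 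Hence, in the orthogonal basis obtained by taking, for each block $t$, the normalized vector $\tfrac{1}{\sqrt{n_t}}\mathbf{1}_{n_t}$ together with any orthonormal basis of $\mathbf{1}_{n_t}^\perp$, the matrix $M$ becomes block-diagonal: the $\sum_t (n_t - 1) = n - \ell$ ``perpendicular'' directions contribute the linear eigenvalues $p_t - q_t$ (each with multiplicity $n_t - 1$), and the $\ell$ ``parallel'' directions span an $M$-invariant subspace on which $M$ restricts to an $\ell \times \ell$ matrix.

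First I would set up this orthonormal change of basis explicitly and verify the two bullet points above by direct computation on the block structure. Then I would compute the $\ell \times \ell$ matrix $M'$ representing $M$ restricted to the span of $\{\tfrac{1}{\sqrt{n_t}}\mathbf{1}_{n_t}\}_{t=1}^\ell$. Off the diagonal, $\langle \tfrac{1}{\sqrt{n_s}}\mathbf{1}_{n_s}, B_{s,t} \tfrac{1}{\sqrt{n_t}}\mathbf{1}_{n_t}\rangle = \tfrac{1}{\sqrt{n_s n_t}} \cdot b_{s,t} \cdot n_s n_t = \sqrt{n_s n_t}\, b_{s,t}$; on the diagonal, $\langle \tfrac{1}{\sqrt{n_s}}\mathbf{1}_{n_s}, A_s \tfrac{1}{\sqrt{n_s}}\mathbf{1}_{n_s}\rangle = \tfrac{1}{n_s}(n_s p_s + n_s(n_s-1)q_s) = p_s + (n_s-1)q_s = n_s q_s + (p_s - q_s)$. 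This gives a symmetric matrix $M'$ with $M'_{st} = \sqrt{n_s n_t}\, b_{s,t}$ for $s \neq t$ and $M'_{ss} = n_s q_s + (p_s - q_s)$, which has the same signature as $M$ away from the linear eigenvalues. Finally, I would note that the matrix $\til M$ in the statement is obtained from $M'$ by the diagonal congruence $\til M = D M' D$ with $D = \diag(\sqrt{n_1},\dots,\sqrt{n_\ell})$: indeed $(D M' D)_{st} = \sqrt{n_s}\sqrt{n_t}\sqrt{n_s n_t}\,b_{s,t} = n_s n_t b_{s,t}$ for $s \neq t$, and $(D M' D)_{ss} = n_s(n_s q_s + (p_s - q_s)) = n_s^2 q_s + n_s(p_s - q_s)$, matching the claimed formula. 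Since congruence by an invertible diagonal matrix preserves signature (Sylvester's law of inertia), $\til M$ and $M'$ have the same signature, completing the proof.

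I do not expect a serious obstacle here; the statement is essentially linear algebra once the right basis is chosen. The one place requiring a little care is confirming that the ``parallel'' span $\spanset\{\mathbf{1}_{n_t} : t \in [\ell]\}$ really is $M$-invariant — this needs both that $A_t$ preserves $\RR\mathbf{1}_{n_t}$ (clear from $A_t \mathbf{1}_{n_t} = (p_t + (n_t-1)q_t)\mathbf{1}_{n_t}$) and that $B_{s,t}\mathbf{1}_{n_t} \in \RR\mathbf{1}_{n_s}$ (clear from $B_{s,t} = b_{s,t}\mathbf{1}$, so $B_{s,t}\mathbf{1}_{n_t} = n_t b_{s,t}\mathbf{1}_{n_s}$) — together with the symmetric fact that the orthogonal complement $\bigoplus_t \mathbf{1}_{n_t}^\perp$ is also $M$-invariant, which follows since $M$ is symmetric. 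The degenerate cases $n_t = 1$ (no perpendicular directions in that block, and $p_t - q_t$ is not an eigenvalue coming from it) and $\ell = n$ (no linear eigenvalues at all, $\til M$ is just $M$ up to congruence) are handled uniformly by this argument since the sum $\sum_t (n_t - 1)$ automatically ignores singleton blocks.
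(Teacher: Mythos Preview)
Your proposal is correct and follows essentially the same approach as the paper: both decompose $\RR^n$ into the span of the block all-ones vectors and its orthogonal complement, read off the $p_t - q_t$ eigenvalues on the complement, and then pass to the $\ell\times\ell$ matrix $\til M$ via a congruence by (a scaling of) the block-ones vectors. The only cosmetic difference is that you normalize first to an orthonormal basis (obtaining an intermediate $M'$) and then apply a diagonal congruence $D M' D$, whereas the paper applies the unnormalized block-ones congruence in one step to land directly on $\til M$.
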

\begin{proof}
Index the entries of the $t$-th block by $\{m_t,\ldots,m_{t+1}-1\}$ as in the previous subsection, i.e.,~$m_{t+1}-1-m_t=n_t$.  Observe that for each block $A_t$ with $n_t \geq 2$, we can immediately find $n_t-1$ eigenvectors $e_{j}-e_{m_t-1}$ with eigenvalue $p_t-q_t$ for $m_t<j<m_{t+1}.$ Thus, for $t = 1,\dots,\ell$, the $t$th block contributes $n_t - 1$ linear eigenvalues, giving us a total of $\sum_{t=1}^\ell (n_t - 1) = n-\ell$ linear eigenvalues.

Moreover, since our eigenvectors form an orthonormal basis, the remaining signature is determined by the $\ell\times \ell$ matrix $\widetilde{M}$ which is obtained by a congruence transform that collapses each of the blocks uniformly:
$$\widetilde{M}:=\left(\begin{array}{c|c|c|c}
    \begin{matrix}
        1 & \cdots & 1
    \end{matrix} & \mathbf{0} &\cdots &\mathbf{0}\\
    \hline 
    \mathbf{0} & \begin{matrix}
        1 & \cdots & 1
    \end{matrix}  & \ddots & \vdots \\
    \hline
     \vdots& \ddots & \ddots &\mathbf{0}\\
     \hline
     \mathbf{0} & \cdots& \mathbf{0} & \begin{matrix}
        1 & \cdots & 1
    \end{matrix} 
\end{array}\right) M\left(\begin{array}{c|c|c|c}
    \begin{matrix}
        1 \\ \vdots \\ 1
    \end{matrix} & \mathbf{0} &\cdots &\mathbf{0}\\
    \hline 
    \mathbf{0} & \begin{matrix}
        1 \\ \vdots \\ 1
    \end{matrix}  & \ddots & \vdots \\
    \hline
     \vdots& \ddots & \ddots &\mathbf{0}\\
     \hline
     \mathbf{0} & \cdots& \mathbf{0} & \begin{matrix}
        1 \\ \vdots \\ 1
    \end{matrix} 
\end{array}\right).$$
In particular, $\widetilde{M}$ has entries
$$\widetilde{M}_{s,t}=\begin{cases}
    n_s^2q_s-n_s(q_s-p_s) & \text{if }s=t,\\
    n_sn_t b_{st} &\text{if else.}
\end{cases}$$
\end{proof}
Note that if we wish to have a nonnegative Haynsworth matrix with Lorentzian signature, then we must require $p_t\leq q_t$ whenever $n_t \geq 2$, since $\til{M}$ will have nonnegative entries and hence at least one positive eigenvalue. In the case of a Lorentzian symmetric polynomial $f$, these linear inequalities on the quadratic Hessians are between coefficients in the $\widetilde{m}$-basis. Furthermore, by studying the $2\times 2$-minors, one finds that the coefficients respect the dominance order on partitons. 

\begin{theorem}~\label{thm: dom implies less}
    Let $f=\sum_{\alpha\vdash d}c_\alpha \widetilde{m}_\alpha$ be a Lorentzian symmetric polynomial of degree $d$, with $c_\alpha$ its $\widetilde{m}$-coefficients in the normalized monomial basis. If $\lambda,\mu\vdash d$ such that $\mu\preceq \lambda$ in dominance order, then $c_\mu\geq c_\lambda.$
\end{theorem}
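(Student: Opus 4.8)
The plan is to reduce the statement to covering relations in dominance order and then feed them into the Haynsworth block structure of the Hessians developed in this section. Since $\preceq$ is the transitive closure of its cover relations, and chaining the inequalities along a saturated chain gives the general case, it suffices to prove $c_\mu\geq c_\lambda$ whenever $\mu\lessdot\lambda$. First I would clear the degenerate cases: if $c_\mu=0$ then $\mu\notin\supp_m(f)$; since $\supp_m(f)$ is an order ideal of the dominance lattice (\cref{thm:sym-lor-supp-sufficiency}) and $\mu\preceq\lambda$, this forces $\lambda\notin\supp_m(f)$, so $c_\lambda=0$; and if $c_\lambda=0$ there is nothing to prove. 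Hence I may assume $c_\mu,c_\lambda>0$, so that $\lambda\in\supp_m(f)$ and the order-ideal property yields $c_\nu>0$ for every $\nu\preceq\lambda$; this will let me divide by coefficients freely. The workhorse is \cref{lem:haynsworth blocks}: for $\rho\vdash d-2$, the matrix $M:=\nabla^2\partial^\rho f$ satisfies $M_{ij}=c_{[\rho+e_i+e_j]}$ and is Haynsworth of the type determined by $\rho$; by condition (H) together with \cref{prop:signature-from-minors} every $2\times 2$ principal minor of $M$ is $\leq 0$, while \cref{lem:Tracy trick} and the remark after it force $p_t\leq q_t$ in every diagonal block of size $\geq 2$.

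Next I would recall the classical (Brylawski) description of covers in the dominance lattice: $\mu\lessdot\lambda$ iff $\lambda=\mu+e_i-e_j$ for some $i<j$ with either $j=i+1$, or $j>i+1$ and $\mu_i=\mu_{i+1}=\cdots=\mu_j$, and split according to whether the two ``active'' parts of $\mu$ agree. \textbf{Case A: $\mu_i=\mu_j=:v$} (this subsumes all covers with $j>i+1$, as well as the case $j=i+1$, $\mu_i=\mu_{i+1}$). Let $\rho$ be obtained from $\mu$ by lowering two of its parts equal to $v$ down to $v-1$; then $\rho\vdash d-2$, and in sorted order two adjacent positions $p,p+1$ of $\rho$ hold the value $v-1$ and lie in a single Haynsworth block of size $\geq 2$. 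A direct check gives $[\rho+e_p+e_{p+1}]=\mu$ and $[\rho+2e_p]=\lambda$, so in $M=\nabla^2\partial^\rho f$ the off-diagonal value $q_t$ of that block equals $c_\mu$ and the diagonal value $p_t$ equals $c_\lambda$; the inequality $p_t\leq q_t$ is exactly $c_\lambda\leq c_\mu$.

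\textbf{Case B: necessarily $j=i+1$ and $a:=\mu_i>\mu_{i+1}=:b$} — this is the crux. Now $\rho:=[\mu-e_i-e_{i+1}]$ carries the \emph{distinct} values $a-1>b-1$ in positions $i,i+1$, so they lie in different Haynsworth blocks and Case A's linear-eigenvalue argument is unavailable. Here I would induct on the gap $g:=a-b\geq 1$ (Case A being the base case $g=0$, with $g=1$ a further base case). For $M=\nabla^2\partial^\rho f$ one computes $M_{i,i}=c_\lambda$, $M_{i,i+1}=c_\mu$, and $M_{i+1,i+1}=c_{[\rho+2e_{i+1}]}$, with $[\rho+2e_{i+1}]=\mu$ when $g=1$ and $[\rho+2e_{i+1}]=\nu:=\mu-e_i+e_{i+1}$ when $g\geq 2$. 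The $2\times 2$ minor on rows $\{i,i+1\}$ being $\leq 0$ gives $c_\mu^2\geq c_\lambda\,c_{[\rho+2e_{i+1}]}$. If $g=1$, this reads $c_\mu^2\geq c_\lambda c_\mu$, and dividing by $c_\mu>0$ finishes; if $g\geq 2$, then $\nu\lessdot\mu$ is again a cover with active rows $i,i+1$ and gap $g-2$ (which falls into Case A when $g=2$), so by the inductive hypothesis $c_\nu\geq c_\mu$, whence $c_\mu^2\geq c_\lambda c_\nu\geq c_\lambda c_\mu$ and $c_\mu\geq c_\lambda$. (Equivalently, this is log-concavity of the coefficients along the staircase $\lambda\gtrdot\mu\gtrdot\nu\gtrdot\cdots$ obtained by repeatedly sliding a box from row $i$ to row $i+1$, with the last step governed by Case A or by the $g=1$ computation.)

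I expect Case B to be the only real obstacle; everything else is bookkeeping. The routine verifications I am skipping are the identities $[\rho+e_i+e_j]=(\text{the appropriate partition})$ asserted above — these follow from \cref{lem:composition-equivalence} together with the elementary observation that the relevant compositions are already sorted — and the remark that all partitions produced along the way stay $\preceq\lambda$, so that their coefficients are positive and every division performed above is legitimate.
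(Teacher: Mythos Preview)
Your proof is correct and takes essentially the same approach as the paper: reduce to covers, set $\gamma=\mu-e_i-e_j$, and extract the needed inequality from the $2\times 2$ principal minor of $\nabla^2\partial^\gamma f$ at rows $i,j$, together with an inductive hypothesis in the adjacent-row, unequal-parts case. The only differences are cosmetic---you induct on the gap $\mu_i-\mu_{i+1}$ where the paper inducts on dominance order, and in Case~A you phrase the $2\times2$ minor computation as the linear-eigenvalue condition $p_t\le q_t$---plus one small slip: in Case~B you should work with the \emph{composition} $\mu-e_i-e_{i+1}$ rather than its sorting $[\mu-e_i-e_{i+1}]$, since the latter need not carry $a-1,b-1$ at positions $i,i+1$ (your later claim that ``the relevant compositions are already sorted'' fails e.g.\ for $\mu=(5,2,2)$, $i=1$, but is also unnecessary).
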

\begin{proof}
    By \cref{thm:sym-lor-supp-sufficiency}, if $\mu\preceq\lambda$ and $c_\lambda > 0$, then $c_\mu > 0$, so we may assume that $c_\lambda, c_\mu > 0$. Moreover, it suffices to prove the result for covering relations $\mu \lessdot \lambda$.

    We proceed by induction on the dominance order. For the base case, the unique minimal element in dominance order is $\mu = 1^d$, and the unique covering relation over $\mu$ is $\lambda = (2,1^{d-2})$. Then we compute
    \[\dl^{(1^{d-2})} f = c_\mu \til{m}_{11} + c_\lambda(\til{m}_{2} + (x_1 + \dots + x_{d-2})\til{m}_1),\]
    where the $\til{m}$ are now symmetric functions in $x_{d-1},\dots,x_n$.
    
    The $2\times 2$ submatrix of $\nabla^2 \dl^{(1^{d-2})} f$ indexed by $x_d,x_{d+1}$ is
    \[\begin{pmatrix}
        c_\lambda & c_\mu\\
        c_\mu & c_\lambda
    \end{pmatrix},\]
    and by Lorentzianity it must have at most one positive eigenvalue, which implies that $c_\mu \geq c_\lambda$, as desired.

    For the inductive step, suppose we have $\mu \vdash d$ such that $c_\nu \geq c_\mu$ for all $\nu \preceq \mu$, and suppose that $\lambda \gtrdot \mu$. Recall that $\lambda \gtrdot \mu$ if and only if there exists $i > k$ such that $\lambda_i=\mu_i+1$, $\lambda_k=\mu_k-1$, and $\lambda_j=\mu_j$ for all $j\neq i,k$; and either 
    \begin{enumerate}[(1)]
        \item $\mu_i=\mu_k$, or
        \item $\mu_i > \mu_k$ and $k=i+1$.
    \end{enumerate}
    We obtain desired inequalities by considering $\partial^\gamma f$ where $\gamma =\mu-e_i-e_k$. We note that by construction, $\gamma + e_i + e_k = \mu$ and $\gamma + 2e_i = \mu + e_i - e_k = \lambda$. Now, consider the $2\times 2$ minor of $\nabla^2 \dl^\gamma f$ indexed by $x_i, x_k$:
    $$
    Q := \begin{pmatrix}
       \partial_i^2\partial^\gamma f & \partial_i\partial_k\partial^\gamma f\\
       \partial_i\partial_k\partial^\gamma f & \partial^2_k\partial^\gamma f
    \end{pmatrix} = \begin{pmatrix}
        c_{\lambda} & c_\mu \\
        c_\mu & c_{[\gamma + 2 e_k]}
    \end{pmatrix},
    $$
    where we use $[\cdot]$ to denote the partition associated to a composition. By Lorentzianity, this submatrix must have at most one positive eigenvalue, which happens if and only if $c_\mu^2 \geq c_{\lambda} c_{[\gamma + 2e_k]}$. We consider two cases: either $\mu_i = \mu_k$, or $\mu_i > \mu_k$ and $k = i+1$.
    
    In the former case, $\mu_i = \mu_k$, so $[\gamma + 2 e_k] = [\mu - e_i + e_k] = [\mu+e_i-e_k] = \lambda$. Therefore, since $Q$ has Lorentzian signature, we have $c_\mu^2 \geq c_\lambda^2$, and since $c_\mu, c_\lambda > 0$, we have $c_\mu \geq c_\lambda$, as desired.
    
     If $k = i+1$ and $\mu_i > \mu_{i+1}$, then we have $c_\mu^2 \geq c_{\lambda} c_\nu$, where $\nu = [\mu - e_i + e_{i+1}]$. We note that $\nu\preceq \mu$, so by induction we have $c_\nu\geq c_\mu.$ Therefore, $c_\mu^2 \geq c_\lambda c_\nu \geq c_\lambda c_\mu$, so $c_\mu \geq c_\lambda$, as desired.
\end{proof}
\begin{remark}
    If we apply the linear inequalities from \cref{thm: dom implies less} to normalized Schur polynomials, we recover the monotonicity of Kostka coefficients proved in \cite[Theorem 2]{white1980monotonicity}.
\end{remark}

From \cref{lem:haynsworth blocks,lem:Tracy trick}, we immediately have the following characterization of Lorentzian symmetric functions, which is the main theorem of this section.
\begin{theorem}\label{thm:sym lor char}
    Let $f=\sum_{\alpha\vdash d} c_\alpha \widetilde{m}_\alpha \in\Lambda^d.$ Then $f$ is a Lorentzian symmetric function if and only if 
    \begin{enumerate}
        \item[(M)] $\supp_m(f)$ has a unique maximal element in dominance order,
        \item[(D)] $c_\lambda\leq c_\mu$ whenever $\mu\preceq \lambda$ in dominance order, and
        \item[(H)] for all partitions $\mu\vdash d-2$ where \begin{itemize}
            \item $\ell(\mu)=k$,
            \item $\mu$ induces a set partition $(\beta_1,\ldots, \beta_\ell)$ of $[k]$
        \end{itemize}
        \noindent the $(\ell+1)\times (\ell+1)$ symmetric matrix $M(\mu)$ has at most one positive eigenvalue:
        \[
            M(\mu)=\left(\begin{array}{ccccc} 
                a_1 & b_{1,2} & \cdots & b_{1,\ell} &r_1\\ 
                b_{1,2} & a_2 & \ddots & \vdots &\vdots\\ 
                \vdots  & \ddots  & \ddots &b_{\ell-1,\ell} &r_{\ell-1} \\ 
                b_{1,\ell} & \cdots & b_{\ell-1,\ell} & a_\ell & r_\ell\\
                r_1 &\cdots & r_{\ell-1} & r_\ell  & q
            \end{array} 
            \right),
        \]
        where 
            \begin{align*}
                a_t &= n_t \left((c_{\mu+2e_{m_t}}+(n_t-1)c_{\mu+e_{m_t}+e_{m_t+1}}\right)\\
                b_{s,t} &= n_sn_tc_{\mu+e_{m_s}+e_{m_t}}\\
                r_s &= n_s c_{\mu+e_{m_s}+e_{k+1}}\\
                q &= c_{\mu+e_{k+1}+e_{k+2}}.
            \end{align*}
    \end{enumerate}

    Moreover, for any $f \in \Lambda^d_n$ with $n \geq d$, $f$ is a Lorentzian symmetric polynomial if and only if (M), (D), and (H') hold, where (H') is identical to (H) except $q$ is replaced by $q' = c_{\mu + e_{k+1} + e_{k+2}} - \frac{1}{n-k}(c_{\mu + e_{k+1} + e_{k+1}} - c_{\mu + 2e_{k+1}})$.
\end{theorem}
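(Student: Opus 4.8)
The plan is to prove both biconditionals at once, feeding off the reductions already assembled above. Fix $\mu\vdash d-2$ with $\ell(\mu)=k$ and induced set partition $(\beta_1,\dots,\beta_\ell)$ of $[k]$, and work on $n\ge d$ variables, where $\mu$ induces on $[n]$ the set partition $(\beta_1,\dots,\beta_\ell,\beta_{\ell+1})$ with $\beta_{\ell+1}=[k+1,n]$ of size $n_{\ell+1}=n-k$ and $m_{\ell+1}=k+1$. By \cref{lem:haynsworth blocks} the matrix $\nabla^2\partial^\mu f_n$ is Haynsworth of this type, with diagonal block parameters $p_t=c_{\mu+2e_{m_t}}$, $q_t=c_{\mu+e_{m_t}+e_{m_t+1}}$ and off-diagonal parameters $b_{s,t}=c_{\mu+e_{m_s}+e_{m_t}}$ for $t=1,\dots,\ell+1$ (here and below a composition is identified with its sorted partition; in particular $p_{\ell+1}=c_{\mu+2e_{k+1}}$ and $q_{\ell+1}=c_{\mu+e_{k+1}+e_{k+2}}$). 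By \cref{lem:Tracy trick} the eigenvalue multiset of $\nabla^2\partial^\mu f_n$ consists of the linear eigenvalues $p_t-q_t$, each with multiplicity $n_t-1$, together with the eigenvalues of the $(\ell+1)\times(\ell+1)$ matrix $\til M$. A direct computation shows that conjugating $\til M$ by the positive diagonal matrix $\diag(1,\dots,1,\tfrac{1}{n-k})$ — an inertia-preserving congruence — produces exactly the matrix $M(\mu)$ of the statement, except that its bottom-right entry becomes $q'_n:=q_{\ell+1}+\tfrac{1}{n-k}(p_{\ell+1}-q_{\ell+1})$ in place of $q=q_{\ell+1}$; this $q'_n$ is precisely the entry $q'$ appearing in (H'), and $q'_n\to q$ as $n\to\infty$.

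For the forward direction, suppose $f$ is a Lorentzian symmetric function (resp. a Lorentzian symmetric polynomial on $n\ge d$ variables). Then (M) is \cref{lem:sym-lorentzian-unique-maxl} and (D) is \cref{thm: dom implies less}. For the Hessian conditions, fix $\mu\vdash d-2$; since permuting variables preserves the signature of a Hessian (as noted before \cref{lem:haynsworth blocks}), Lorentzianity gives that $\nabla^2\partial^\mu f_n$ has at most one positive eigenvalue, hence so does its congruent reduction $\til M$, hence so does $M(\mu)$ with bottom-right entry $q'_n$. In the polynomial case this is condition (H'). In the symmetric-function case it holds for every $n\ge d$; since all entries of $M(\mu)$ other than the bottom-right one are independent of $n$ and $q'_n\to q$, continuity of eigenvalues yields $\lambda_2(M(\mu))\le 0$, i.e.\ condition (H).

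For the converse, assume (M), (D), and (H) (resp. (H')). By (D) the set $\supp_m(f)$ is downward closed in dominance order, and by (M) it has a unique maximal element $\lambda^\star$, so $\supp_m(f)=[1^d,\lambda^\star]$; by the computation in the proof of \cref{thm:sym-lor-supp-sufficiency}, $\supp(f_n)=P(\lambda^\star)\cap\ZZ^n$ is the set of integer points of a generalized permutohedron, hence M-convex. M-convexity of $\supp(f_n)$ (and of all its links $\supp(\partial^\alpha f_n)$) makes each $\partial^\alpha f_n$ indecomposable, so condition (S) of \cref{def:lorentzian} holds; see \cite[Theorem 3.10]{BrandenHuh20Lorentzian}. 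For condition (H) of \cref{def:lorentzian} it suffices, by the symmetry reduction, to check that $\nabla^2\partial^\mu f_n$ has at most one positive eigenvalue for each $\mu\vdash d-2$. By \cref{lem:Tracy trick} its eigenvalues are the $p_t-q_t$ (with multiplicities $n_t-1$) together with those of $\til M$. The dominance relation $[\mu+e_a+e_b]\preceq[\mu+2e_a]$ — valid because replacing two equal parts $v{+}1,v{+}1$ of a partition by $v,v{+}2$ raises it in dominance order — together with (D) forces $p_t-q_t\le 0$ for every block, in particular $p_{\ell+1}-q_{\ell+1}\le 0$, so $q'_n\le q$. Then $M(\mu)$ with bottom-right entry lowered to $q'_n$ is obtained from $M(\mu)$ by subtracting a positive semidefinite rank-one matrix, so its second-largest eigenvalue is at most $\lambda_2(M(\mu))\le 0$ by (H) (and in the polynomial case (H') asserts this directly). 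Hence all linear eigenvalues are $\le 0$ and $\til M$ has at most one positive eigenvalue, so $\nabla^2\partial^\mu f_n$ does too; thus $f_n$ is Lorentzian for all $n$ in the symmetric-function case, and for the fixed $n$ in the polynomial case (where $n\ge d$ ensures $n-k\ge 2$).

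The step I expect to be the main obstacle is the bookkeeping around the distinguished zero-block $\beta_{\ell+1}$, whose size $n-k$ grows with the number of variables. One has to verify that the inertia-preserving rescaling of the reduced matrix $\til M$ from \cref{lem:Tracy trick} concentrates all $n$-dependence into the single entry $q'_n$, correctly identify the resulting $n$-independent matrix with $M(\mu)$, and then see that $q'_n\le q$ (using (D)) with $q'_n\to q$, so that the $n$-free condition (H) and the family of conditions (H') are genuinely equivalent — via a positive semidefinite perturbation in one direction and continuity of eigenvalues in the other. Everything else is either a citation (\cref{lem:sym-lorentzian-unique-maxl,thm: dom implies less,thm:sym-lor-supp-sufficiency}, and \cite[Theorem 3.10]{BrandenHuh20Lorentzian}), the elementary majorization fact used for the linear eigenvalues, or the routine computation matching $\til M$ to the explicit formulas of \cref{lem:Tracy trick}.
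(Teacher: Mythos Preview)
Your proposal is correct and follows essentially the same route as the paper: cite \cref{lem:sym-lorentzian-unique-maxl} and \cref{thm: dom implies less} for (M) and (D), reduce each $\nabla^2\partial^\mu f_n$ via \cref{lem:haynsworth blocks,lem:Tracy trick} to a rescaled $(\ell+1)\times(\ell+1)$ matrix whose only $n$-dependence sits in the bottom-right entry $q'_n$, and then pass between (H) and (H') by the PSD rank-one perturbation (using $q'_n\le q$ from (D)) in one direction and a limit/continuity argument in the other. One small wording slip: \cref{lem:Tracy trick} only guarantees that the \emph{signature} of the remaining eigenvalues matches that of $\til M$ (it is a congruence, not a similarity), not that the remaining eigenvalues literally coincide with those of $\til M$; since you only ever use the signature, this does not affect the argument.
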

\begin{proof}
    We treat only the symmetric function case of the above theorem. The proof for symmetric polynomials follows nearly identically.

    First, if $f$ is Lorentzian, then (M) holds by \cref{lem:sym-lorentzian-unique-maxl}, and (D) holds by \cref{thm: dom implies less}. To see that (H) holds, we first apply the Haynsworth reduction technique from \cref{lem:Tracy trick}, then apply a congruence transform by $\diag(1,\dots,1,\frac{1}{n-k})$ and take the limit as $n\to\infty$, which gives the desired $M(\mu)$.

    Now, suppose that (M), (D), and (H) hold. We note that (D) implies that $\supp_m(f)$ is downward closed, so (M) and (D) together imply that $f$ has M-convex support by \cref{thm:sym-lor-supp-sufficiency}.
    
    Thus, it just remains to show that $\nabla^2 \dl^\mu f$ has Lorentzian signature for all $\mu\vdash d-2$. Indeed, we note that for any diagonal block in the Haynsworth structure described in \cref{lem:haynsworth blocks}, if $A_{t,t} = c_\nu I + c_\lambda(\ones - I)$, then $\lambda \preceq \nu$. In particular, (D) implies that all of the linear eigenvalues described in \cref{lem:Tracy trick} are nonpositive, and so $\nabla^2\dl^\mu f_n$ has Lorentzian signature if and only if
    \[Q(\mu) = \begin{pmatrix}
        a_1 & b_{1,2} & \cdots & b_{1,\ell} &(n-k)r_1\\ 
        b_{1,2} & a_2 & \ddots & \vdots &\vdots\\ 
        \vdots  & \ddots  & \ddots &b_{\ell-1,\ell} &(n-k)r_{\ell-1} \\ 
        b_{1,\ell} & \cdots & b_{\ell-1,\ell} & a_\ell & (n-k)r_\ell\\
        (n-k)r_1 &\cdots & (n-k)r_{\ell-1} & (n-k)r_\ell  & (n-k)\left((n-k-1)c_{\mu + e_{k+1}+e_{k+2}} + c_{\mu + 2e_{k+1}}\right)
    \end{pmatrix}\]
    has at most one positive eigenvalue. Since $c_{\mu+e_{k+1}+e_{k+2}} \geq c_{\mu + 2e_{k+1}}$ by (D), we note that
    \[\begin{pmatrix}
        1 & & & \\
        & \ddots & &\\
        & & 1 &\\
        & & & \frac{1}{n-k}
    \end{pmatrix}Q(\mu)\begin{pmatrix}
        1 & & & \\
        & \ddots & &\\
        & & 1 &\\
        & & & \frac{1}{n-k}
    \end{pmatrix} = M(\mu) - \frac{c_{\mu + e_{k+1}+e_{k+2}} - c_{\mu + 2e_{k+1}}}{n-k}e_{\ell+1}e_{\ell+1}^T\preceq M(\mu).\]
    Since $M(\mu)$ has Lorentzian signature by (H), $Q(\mu)$ has Lorentzian signature as well, and hence $\nabla^2\dl^\mu f_n$ has at most one positive eigenvalue for all $n$. Hence, if (M), (D), and (H) hold, then $f$ is a Lorentzian symmetric function, as desired.
\end{proof}

In \cite{chin2024realstabilitylogconcavity}, the first author shows that for fixed degree $d$, one can test if a homogeneous polynomial of degree $d$ is Lorentzian using a polynomial number of operations in the number of variables. For symmetric polynomials, we can do even better: the number of arithmetic operations is now independent of the number of variables.
\begin{theorem}\label{thm:const-time}
    For fixed degree $d$, we can test Lorentzianity of $f \in \Lambda^d$ or $f \in \Lambda^d_n$ in a constant number of arithmetic operations. Specifically, the number of operations is independent of the number of variables.
\end{theorem}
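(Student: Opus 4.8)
The plan is to use the characterization of Lorentzianity from \cref{thm:sym lor char}, which reduces checking the Lorentzian property to verifying three conditions: (M) a unique maximal element in $\supp_m(f)$, (D) monotonicity of the $\widetilde{m}$-coefficients along dominance order, and (H)/(H') a positivity-of-eigenvalues condition on a family of matrices $M(\mu)$ indexed by partitions $\mu \vdash d-2$. The crucial observation is that none of these conditions involves the number of variables $n$ in a way that scales: the partitions of $d$ and of $d-2$ are fixed finite sets once $d$ is fixed, so the number of coefficients $c_\alpha$ is a constant depending only on $d$, and the matrices $M(\mu)$ have size $(\ell+1)\times(\ell+1)$ where $\ell \leq d-2$ is bounded independently of $n$.

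\textbf{Key steps.} First I would observe that condition (M) can be checked by examining the finitely many maximal elements of $\{\mu \vdash d : c_\mu \neq 0\}$ in dominance order; since the set of partitions of $d$ is fixed, this is $O_d(1)$ comparisons. Second, condition (D) requires checking the inequality $c_\lambda \leq c_\mu$ for each covering relation $\mu \lessdot \lambda$ in the dominance lattice of partitions of $d$; again the number of such relations depends only on $d$. Third, for condition (H) (or (H') in the polynomial case), for each of the finitely many partitions $\mu \vdash d-2$, one builds the matrix $M(\mu)$ whose entries $a_t, b_{s,t}, r_s, q$ (or $q'$) are explicit fixed rational-linear combinations of the coefficients $c_\alpha$ and, in the (H') case, also involve $n$ — but crucially $n$ enters only through the single scalar $\frac{1}{n-k}$ in one entry, so forming the matrix is still a constant number of arithmetic operations. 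Finally, by \cref{prop:signature-from-minors}, checking that each $M(\mu)$ has at most one positive eigenvalue amounts to verifying the sign conditions $(-1)^{|S|-1}\det(M(\mu)[S]) \geq 0$ for all nonempty $S \subseteq [\ell+1]$; since $\ell+1 \leq d-1$, there are at most $2^{d-1}-1$ such minors, each a determinant of a matrix of bounded size, computable in $O_d(1)$ operations. Summing over the $O_d(1)$ partitions $\mu$, the total count is bounded by a function of $d$ alone.

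\textbf{Main obstacle.} The only subtlety — and it is minor — is the symmetric-polynomial case, where condition (H') genuinely depends on $n$ through the entry $q' = c_{\mu + e_{k+1}+e_{k+2}} - \frac{1}{n-k}(c_{\mu + e_{k+1}+e_{k+1}} - c_{\mu + 2e_{k+1}})$. Here one must note that treating $n$ as part of the input still costs only a constant number of arithmetic operations to form $q'$ (one subtraction, one division, one more subtraction), so the overall bound is unaffected; the point is simply that the \emph{number} of operations, not their bit-complexity, is what is independent of $n$. I would also remark that one should invoke \cref{thm:sym lor char} to justify that checking (M), (D), and (H)/(H') is not merely necessary but sufficient, so that this finite procedure is a genuine decision algorithm. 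No serious difficulty is expected; the theorem is essentially a corollary of \cref{thm:sym lor char} combined with \cref{prop:signature-from-minors}, packaged as an operation count.
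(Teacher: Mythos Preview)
Your proposal is correct and follows essentially the same approach as the paper: reduce via \cref{thm:sym lor char} to conditions (M), (D), and (H)/(H'), observe that each involves only the fixed finite set of partitions of $d$ and $d-2$, and use \cref{prop:signature-from-minors} to test the signature of each $M(\mu)$ via principal minors. The only cosmetic difference is that the paper bounds the matrix size more sharply by $\ell = O(\sqrt{d})$ (since the $\ell$ distinct nonzero parts of $\mu\vdash d-2$ sum to at least $1+2+\dots+\ell$), whereas you use $\ell \leq d-2$; both are constants in $d$, so this does not affect the argument.
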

\begin{proof}
    Since $d$ is fixed, we can enumerate partitions of $d$ and $d-2$ in constant time. Checking dominance order to verify conditions (M) and (D) from \cref{thm:sym lor char} therefore takes constant time, as does computing the reduced Hessian $M(\mu)$ for each $\mu \vdash d-2$. (Note that for the symmetric polynomials case, the parameter $n$ may appear in some entries of $M(\mu)$.) If $M(\mu)$ is size $(\ell + 1)\times(\ell+1)$, then $1 + \dots + \ell \leq d-2$, so $\ell \leq O(\sqrt{d})$, which we regard as a constant. Then, by \cref{prop:signature-from-minors}, we may test the signature of each $M(\mu)$ in a constant number of operations, as desired.
\end{proof}

\section{Applying Reductions for Small Degrees} \label{sec:small degree}

In this section, we demonstrate the techniques outlined in \cref{subsec:reductions} and give explicit complete characterizations of Lorentzian symmetric polynomials of degree up to four and Lorentzian symmetric functions for degrees five and six.
    
\subsection{Symmetric Quadratics}\label{subsec:symmetric-quadratics}
    For quadratic polynomials, we only have a single Haynsworth block, so quadratic Lorentzian symmetric polynomials of degree two are characterized by a single linear inequality. In fact, quadratic symmetric functions are Lorentzian if and only if their normalized Schur expansion is nonnegative.

    \begin{theorem}\label{prop:sym-2-Lor}
        Let $f = a \til{m}_2 + b \til{m}_{11} \in\Lambda^2$. The following are equivalent:
        \begin{enumerate}[(i)]
            \item The symmetric function $f$ is Lorentzian.
            \item The symmetric polynomial $f_n \in \Lambda_n^2$ is Lorentzian for some $n \geq 2$.
            \item The coefficients of $f$ satisfy $0 \leq a \leq b$.
        \end{enumerate}
    \end{theorem}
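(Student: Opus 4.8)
The plan is to reduce the Lorentzian conditions of \cref{def:lorentzian} to their degree-two form, compute a single Hessian explicitly, and then observe that the resulting inequality does not involve $n$, which collapses (i), (ii), and (iii) all at once. One could instead simply invoke \cref{thm:sym lor char}: for $d=2$ the only partition of $d-2=0$ is the empty one, so condition (H) concerns a $1\times 1$ matrix and is automatic, condition (M) is automatic since the two partitions $(2)\succ(1,1)$ of $2$ are comparable, and condition (D) reads exactly $a\le b$; together with the nonnegativity built into the definition of Lorentzian this gives (i) $\Leftrightarrow$ (iii). I will give the self-contained argument instead, since the quadratic case is short and serves as the base case for the later small-degree analysis.

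First I would record that $\til m_2=\tfrac12\sum_i x_i^2$ and $\til m_{11}=\sum_{i<j}x_ix_j$, so $f_n=\tfrac a2\sum_i x_i^2+b\sum_{i<j}x_ix_j$. Because $d=2$, condition (S) only asks that $f_n$ itself be indecomposable and condition (H) only asks that $\nabla^2 f_n$ have at most one positive eigenvalue; of course $f_n$ must also lie in $\RR_{\ge0}[x_1,\dots,x_n]_2$, i.e.\ $a,b\ge0$. A direct computation gives $\nabla^2 f_n=(a-b)I_n+b\,\ones\ones^T$, whose eigenvalues are $a-b$ with multiplicity $n-1$ (on $\ones^\perp$) and $a+(n-1)b$ with multiplicity $1$ (on the span of $\ones$).

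Next I would analyze the sign pattern assuming $a,b\ge 0$. If $a>b$, then $a-b>0$: when $n\ge 3$ this eigenvalue has multiplicity $\ge 2$, and when $n=2$ the other eigenvalue $a+b$ is also positive (since $a>b\ge0$ forces $a>0$); either way $\nabla^2 f_n$ has two positive eigenvalues, contradicting (H). Hence $a\le b$, and conversely if $0\le a\le b$ then $a-b\le0$ and $a+(n-1)b\ge0$, so (H) holds. For indecomposability: if $b>0$ then every product $x_ix_j$ appears in $f_n$, so the variables cannot be partitioned into two nonempty blocks and $f_n$ is indecomposable; if $b=0$ then $a\le b$ forces $a=0$ and $f_n\equiv0$, which is (vacuously) indecomposable. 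Thus, for every fixed $n\ge 2$, I will have shown that $f_n$ is Lorentzian if and only if $0\le a\le b$.

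Finally, this $n$-independence is what closes the argument: ``$f_n$ Lorentzian for all $n$'' (which is (i)), ``$f_n$ Lorentzian for some $n\ge 2$'' (which is (ii)), and ``$0\le a\le b$'' (which is (iii)) are all equivalent to the same statement; I should also note in passing that the $n=1$ specialization $f_1=\tfrac a2 x_1^2$ is Lorentzian as soon as $a\ge0$, which is exactly why (ii) must be stated with $n\ge2$ rather than $n\ge1$. There is no real obstacle here—the only care needed anywhere is bookkeeping in the degenerate strata (separating $n=2$ from $n\ge3$ when counting positive eigenvalues, and observing that the boundary case $b=0$ forces $f\equiv0$)—so the quadratic case is genuinely the easy base of the small-degree classification.
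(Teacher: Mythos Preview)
Your proof is correct and follows essentially the same approach as the paper's own proof: both compute $\nabla^2 f_n=(a-b)I+b\,\ones\ones^T$, read off the eigenvalues $a-b$ (multiplicity $n-1$) and $a+(n-1)b$, and conclude that the Lorentzian signature condition is exactly $0\le a\le b$ independently of $n$. The only cosmetic difference is that the paper organizes the equivalences as a cycle (i)$\Rightarrow$(ii)$\Rightarrow$(iii)$\Rightarrow$(i), whereas you prove ``$f_n$ Lorentzian $\iff 0\le a\le b$'' uniformly in $n\ge2$ and then observe the $n$-independence; your handling of the degenerate cases ($n=2$ versus $n\ge3$, and $b=0$ forcing $f\equiv0$) is slightly more explicit than the paper's, but the substance is identical.
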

    \begin{proof}
        The implication (i) $\Rightarrow$ (ii) holds by the definition of Lorentzian symmetric functions.
        
        To prove (ii) $\Rightarrow$ (iii), we note that by direct computation,
        \[\nabla^2 f_n = \begin{pmatrix}
            a & b & \dots & b\\
            b & a & \ddots & \vdots\\
            \vdots & \ddots & \ddots & b\\
            b & \dots & b & a
        \end{pmatrix} = b \ones\ones^T + (a-b)I,\]
        which has eigenvalues $\lambda_1 = a + (n-1) b$ with multiplicity one and $\lambda_2 = a-b$ with multiplicity $n-1$. If $f_n$ is Lorentzian, then $a, b \geq 0$ and at least one of $a,b$ is strictly positive, so $\lambda_1 > 0$. Since $n\geq 2$, $\lambda_2$ has multiplicity at least one, so we must have $\lambda_2 = a-b \leq 0$, and hence $b \geq a$, as desired.

        On the other hand, if $0 \leq a \leq b$, then $f$ has nonnegative coefficients and by the same computation as above, $\nabla^2 f_n$ has at most one positive eigenvalue for all $n$. Moreover, since $a$ and $b$ are not both zero, this implies that $b > 0$, so $f_n$ is indecomposable for all $n$. Hence, $f$ is Lorentzian, so (iii) $\Rightarrow$ (i), as desired.
    \end{proof}

    In fact, this condition coincides exactly with $f$ having a nonnegative expansion in the normalized Schur basis.

    \begin{proposition}
        Let $f$ be a symmetric polynomial of degree two in $n \geq 2$ variables.  Then $f$ is Lorentzian if and only if it is normalized Schur positive. That is,
        $f\in L^2_n\cap \Lambda_n^2$ if and only if $f=aNs_{2}+bNs_{11}$ with $a,b\geq 0.$
    \end{proposition}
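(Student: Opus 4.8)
The plan is to connect the Lorentzianity characterization just established in \cref{prop:sym-2-Lor} with the change of basis between the normalized monomial basis and the normalized Schur basis in degree two. The key observation is that $s_2 = m_2 + m_{11}$ and $s_{11} = m_{11}$, which after normalization gives $Ns_2 = \til{m}_2 + 2\til{m}_{11}$ and $Ns_{11} = \til{m}_{11}$ (using $2!=2$ for the conjugate partition $(1,1)$ and $2!=2$ again for $(2)$; I would double-check these normalization constants explicitly, since $N(m_{11}) = m_{11}/(1!\cdot 1!) = m_{11}$ and $N(m_2) = m_2/2!$, so in fact $Ns_2 = \frac{1}{2}m_2 + m_{11} = \til m_2 + 2\til m_{11}$ and $Ns_{11} = m_{11} = \til m_{11}$ — the relation to record is $Ns_2 = \til m_2 + 2\til m_{11}$, $Ns_{11}=\til m_{11}$).

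First I would write $f = a\, Ns_2 + b\, Ns_{11}$ in the normalized monomial basis, obtaining $f = a\,\til m_2 + (2a + b)\,\til m_{11}$. Then I would invoke \cref{prop:sym-2-Lor}: $f$ is Lorentzian if and only if $0 \le a \le 2a+b$, i.e. if and only if $a \ge 0$ and $a + b \ge 0$. So I would need to argue that "$a\ge 0$ and $a+b\ge 0$" is equivalent to "$a\ge 0$ and $b\ge 0$." The forward direction of the proposition statement (normalized Schur positive $\Rightarrow$ Lorentzian) is immediate from this, since $a,b\ge 0$ certainly implies $a\ge 0$ and $a+b\ge 0$.

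The main obstacle — and the place where I would be most careful — is the reverse direction: showing Lorentzian implies $b \ge 0$, not merely $a + b \ge 0$. The inequalities $a\ge 0$, $a+b\ge 0$ alone do not force $b\ge 0$ (take $a=1$, $b=-1$). So I expect the resolution is that the support condition (M) from \cref{thm:sym lor char}, or more directly the M-convexity of the support established in \cref{thm:sym-lor-supp-sufficiency}, rules out the problematic sign. Concretely, if $b < 0$ then the coefficient of $\til m_{11}$ is $2a+b$, and I would check whether $m$-support being a dominance interval $[1^2,\lambda]$ forces $b\ge 0$: if $a>0$ the unique maximal element is $(2)$, so we need $(1,1)\in\supp_m(f)$, i.e. $2a+b>0$, which is weaker than $b\ge0$. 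This suggests I may actually have the Schur normalization backwards, and the correct statement is that Lorentzianity is equivalent to $2a+b\ge 0$ together with $a\ge 0$, which would mean I should instead verify that the proposition as stated is using a convention under which the equivalence is exact — i.e. recheck that $\til m_2$-coefficient $a$ and $\til m_{11}$-coefficient $b$ of $aNs_2+bNs_{11}$ are honestly $(a, 2a+b)$, and then the claim "$0\le a\le b$ (monomial) $\iff$ $a,b\ge0$ (Schur)" fails unless the Schur-to-monomial matrix is $Ns_2=\til m_{11}$, $Ns_{11}=\til m_2 + \cdots$. The honest plan, then, is: compute the exact transition matrix between $\{\til m_2,\til m_{11}\}$ and $\{Ns_2, Ns_{11}\}$, substitute into the inequality $0\le(\text{coeff of }\til m_2)\le(\text{coeff of }\til m_{11})$ from \cref{prop:sym-2-Lor}, and simplify; the resulting region in $(a,b)$-space should be exactly the nonnegative orthant, and verifying this reduces to elementary linear inequalities once the transition matrix is correctly pinned down. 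I would present the transition matrix computation as the crux and let the inequality manipulation be routine.
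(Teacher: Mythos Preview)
Your approach is exactly the paper's: express $f=aNs_2+bNs_{11}$ in the normalized monomial basis and apply \cref{prop:sym-2-Lor}. The confusion you encountered comes from a single arithmetic slip in the transition matrix. You correctly computed $N(m_{11})=m_{11}/(1!\cdot 1!)=m_{11}$, but then wrote $m_{11}=2\til m_{11}$; in fact $\til m_{11}=m_{11}$, so
\[
Ns_2=\tfrac{1}{2}m_2+m_{11}=\til m_2+\til m_{11},\qquad Ns_{11}=\til m_{11}.
\]
Thus $f=a\til m_2+(a+b)\til m_{11}$, and \cref{prop:sym-2-Lor} gives that $f$ is Lorentzian iff $0\le a\le a+b$, i.e.\ iff $a\ge 0$ and $b\ge 0$. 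With the corrected coefficient there is no obstruction in the reverse direction, and no need to invoke support conditions; the region is exactly the nonnegative orthant, as you anticipated at the end.
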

    \begin{proof}
        Let $f = a N s_{2} + b N s_{11}$. We wish to show that $f$ is Lorentzian if and only if $a, b \geq 0$. We note that $N s_{11} = \til{m}_{11}$ and $N s_2 = \til{m}_2 + \til{m}_{11}$, so $f = a \til{m}_2 + (a+b)\til{m}_{11}$. Thus, by \cref{prop:sym-2-Lor}, $f$ is Lorentzian if and only if $0 \leq a \leq a+b$, which happens if and only if $a,b\geq 0$, as desired.
    \end{proof}

\subsection{Symmetric Cubics}\label{subsec:symmetric-cubics}
Next, we analyze the semialgebraic set of Lorentzian symmetric polynomials of degree three. We show that Lorentzian cubics are defined by a chain of linear inequalities, corresponding to the inequalities from \cref{thm: dom implies less}, as well as a single quadratic inequality.

The computations in \cref{thm:sym-cubic-lc-iff} are essentially those described in the structural results in \cref{lem:haynsworth blocks,lem:Tracy trick}. However, we will go through the steps in detail in the proof, so as to have an explicit illustrative example of how our techniques work.

\begin{theorem}\label{thm:sym-cubic-lc-iff}
    Let $f = a \widetilde{m}_3 + b \widetilde{m}_{21} + c \widetilde{m}_{111} \in \RR_{\geq 0}[x_0,\dots,x_n]$ be a symmetric homogeneous cubic polynomial on $n+1 \geq 3$ variables. Then $f$ is Lorentzian if and only if $0 \leq a\leq b\leq c$ and $ab + (n-1)ac \leq nb^2$.
\end{theorem}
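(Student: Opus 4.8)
The plan is to apply the general characterization in \cref{thm:sym lor char} (the symmetric polynomial version, with the modified corner entry $q'$) to the degree-three case, where the partitions of $d-2 = 1$ consist of the single partition $\mu = (1)$. Condition (M) is automatic here since the candidate support $\{(3),(21),(111)\}$ has $(3)$ as its unique maximal element in dominance order, and it only matters whether all three coefficients are positive or whether the support is a proper downset. Condition (D) unwinds to the chain $a \le b \le c$ directly from the covering relations $(111) \lessdot (21) \lessdot (3)$. So the bulk of the argument is condition (H'): for $\mu = (1)$, we have $\ell(\mu) = k = 1$, the induced set partition of $[1]$ is the single block $\beta_1 = \{1\}$, so $n_1 = 1$ and $\ell = 1$, and we must check that a certain $2\times 2$ matrix $M((1))$ (with the corner entry adjusted by the $\frac{1}{n-k} = \frac{1}{n}$ correction for $n+1$ variables, noting the paper indexes variables $x_0,\dots,x_n$ so there are $n+1 = N$ variables and $N - k = n$) has at most one positive eigenvalue.

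First I would compute $\dl^{(1)} f = \dl_0 f$ explicitly, or equivalently just read off the entries of $M((1))$ from the formulas in \cref{thm:sym lor char}. With $k=1$, $n_1 = 1$: the entry $a_1 = n_1(c_{\mu + 2e_{m_1}} + (n_1-1)c_{\mu+e_{m_1}+e_{m_1+1}})$ becomes $c_{(1)+2e_1} = c_{(3)} = a$; the entry $r_1 = n_1 c_{\mu + e_{m_1} + e_{k+1}} = c_{(1)+e_1+e_2} = c_{(21)} = b$; and the corner entry $q' = c_{\mu + e_{k+1}+e_{k+2}} - \frac{1}{n}(c_{\mu+2e_{k+1}} - c_{\mu+2e_{k+1}})$... more precisely $q' = c_{(1)+e_2+e_3} - \frac{1}{n}(c_{(1)+e_2+e_2} - c_{(1)+2e_2}) = c_{(111)} - \frac{1}{n}(c_{(21)} - c_{(111)}) = c - \frac{1}{n}(b - c)$. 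Wait — I need to be careful about which of the two coefficients in the correction term is which; $c_{\mu + e_{k+1} + e_{k+1}}$ and $c_{\mu + 2e_{k+1}}$ denote the same composition $\mu + 2e_{k+1} = (1,2)$ which reorders to partition $(21)$, so that reading gives $q' = c$; I should instead track the correction as it actually appears, namely comparing $c_{\mu+e_{k+1}+e_{k+2}}$ (partition $(111)$) against $c_{\mu + 2e_{k+1}}$ (partition $(21)$), giving the genuine reduced corner. In any case, after this careful bookkeeping the matrix is
\[
M((1)) = \begin{pmatrix} a & b \\ b & q' \end{pmatrix}, \qquad q' = c - \frac{b-c}{n} = \frac{(n+1)c - b}{n}.
\]
By \cref{prop:signature-from-minors}, since the off-diagonal entry $b \ge 0$, this has at most one positive eigenvalue iff its determinant is $\le 0$, i.e. $a q' \le b^2$, i.e. $a\bigl((n+1)c - b\bigr) \le n b^2$, i.e. $(n+1)ac - ab \le n b^2$. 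Wait, the target inequality in the theorem is $ab + (n-1)ac \le n b^2$. This discrepancy is the crux: the paper uses $n+1$ variables but writes the bound with $n-1$, so I would double-check the index shift — with $N = n+1$ variables, $N - k = n$, and re-deriving gives exactly $ab + (n-1)ac \le nb^2$ once the correction term $\frac{1}{N-k}$ and the genuine identity of the two coefficients in it are tracked, which (rearranging $aq' \le b^2$ with $q' = c + \frac{c - b}{N-k}$, $N-k = n$... ) I'd verify line by line. This index-bookkeeping — making the $\frac{1}{n-k}$ correction and the $n$ vs $n+1$ variable count line up with the stated $(n-1)$ and $n$ coefficients — is the one step where I expect the only real risk of error.

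To finish, I would handle the degenerate cases of (M) separately: if some of $a,b,c$ vanish, (D) forces the support to be a downset of $\{(111) \lessdot (21) \lessdot (3)\}$, so the only possibilities are $f = 0$, $f = c\,\til m_{111}$ (Lorentzian, $= c\, e_3$ up to scaling, always OK), $f = b\,\til m_{21} + c\,\til m_{111}$, or the full support; in each surviving case the quadratic inequality $ab + (n-1)ac \le nb^2$ is either vacuous (when $a = 0$) or already subsumed, so the clean statement $0 \le a \le b \le c$ together with $ab + (n-1)ac \le nb^2$ captures all of them. Alternatively — and this is the route I would actually write up, to keep the section self-contained and illustrative as the authors promise — I would skip \cref{thm:sym lor char} and instead compute $\nabla^2 \dl_i f$ directly for a representative $i$, observe its Haynsworth structure via \cref{lem:haynsworth blocks} (one diagonal block of size $1$ for the differentiated variable's "type" and one of size $n$ for the rest), peel off the $n-1$ linear eigenvalues $p - q = c_{(111)} - c_{(21)} = c - b \le 0$ using \cref{lem:Tracy trick}, and reduce to the $2\times 2$ matrix $\widetilde M$, whose determinant condition gives the quadratic inequality after the arithmetic above. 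Either way the mathematical content is a single $2\times 2$ determinant sign condition; the work is entirely in setting up the reduction and getting the constants right.
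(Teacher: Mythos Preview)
Your approach matches the paper's. Both reduce Lorentzianity to the single partition $\mu = (1)$ of $d-2$, exploit the Haynsworth block structure of $\nabla^2\partial_0 f$ to peel off $n-1$ linear eigenvalues, and finish with a $2\times 2$ determinant condition. The paper takes exactly your ``alternative route'' --- it writes out $\nabla^2\partial_0 f$ explicitly, reads off the linear eigenvalue, and computes the reduced matrix $\begin{pmatrix} a & nb \\ nb & n(b + (n-1)c)\end{pmatrix}$ rather than invoking \cref{thm:sym lor char}; the $b=0$ case is handled separately via the support condition, just as you sketch.

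Two sign slips to fix. First, the linear eigenvalue from the large block is $p - q = b - c$, not $c - b$: the diagonal entries of that block are $c_{[\mu + 2e_i]} = c_{(21)} = b$ and the off-diagonals are $c_{[\mu + e_i + e_j]} = c_{(111)} = c$, so nonpositivity gives $b \le c$ as intended. Second, the correct corner is $q' = c - \tfrac{1}{n}(c - b) = \tfrac{(n-1)c + b}{n}$, not $\tfrac{(n+1)c - b}{n}$; with this $q'$ the condition $aq' \le b^2$ rearranges immediately to $ab + (n-1)ac \le nb^2$, and there is no residual discrepancy. Your confusion here is partly induced by a typo in the paper's stated formula for $q'$ in \cref{thm:sym lor char}, where the two coefficients inside the correction are identical as written; the intended first term is $c_{\mu + e_{k+1} + e_{k+2}}$, as one sees from the derivation in that theorem's proof.
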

\begin{proof}
     First, we consider the case where $b = 0$. In this case, the claim reduces to showing that $f$ is Lorentzian if and only if $a = 0$. By \cref{thm:sym-lor-supp-sufficiency}, if $f$ is Lorentzian, then $\supp_m(f)$ is an interval $[111,\mu]$ in dominance order, and if $b = 0$, then $(2,1) \notin \supp_m(f)$, which implies that $\supp_m(f) = \{111\}$ and so $a = 0$. On the other hand, if $a = 0$, then $f = c \til{m}_{111} = c e_3$, which is Lorentzian \cite[Example 2.27]{BrandenHuh20Lorentzian}, as desired.

    It now remains to consider the case where $b > 0$. We note that the fact that $(2,1) \in \supp_m(f)$ implies that $f$ is indecomposable. As noted in \cref{subsec:reductions}, all derivatives of the same type are equivalent up to permuting the variables. The only partition of $d-2 = 1$ is $(1)$, so it suffices to check that $M := \nabla^2 \dl_0 f$ has Lorentzian signature.

    By the definition of the symmetric normalized monomial basis,
    \[
    \partial_{0} f = a\frac{x_0^2}{2} + b\sum_{j=1}^n x_0x_j + b \sum_{j=1}^n \frac{x_j^2}{2} + c \sum_{1 \leq j < k \leq n} x_jx_k,
    \]
    so
    \[
    M = \left(\begin{array}{c|cccc}
        a & b & b & \dots & b\\\hline
        b & b & c & \dots & c\\
        b & c & \ddots & \ddots & \vdots\\
        \vdots & \vdots & \ddots & \ddots& c\\
        b & c & \dots & c & b
    \end{array}\right) \in \RR^{(n+1) \times (n+1)}.
    \]
    It remains to show that $M$ has at most one positive eigenvalue if and only if $a \leq b \leq c$ and $ab + (n-1)ac \leq nb^2$.
    
    The bottom right block of $M$ gives us a linear eigenvalue $\lambda_1 = b-c$ with multiplicity $n-1$ and associated eigenspace $V_1 = \spanset\{e_1-e_k : k = 2,\dots,n\}$.

    Since $\nabla^2(\partial_0 f)$ is symmetric, then the remaining eigenvectors may be chosen to be in $V_1^\perp = \spanset\{e_0, e_1 + \dots + e_n\}$. Let $U$ be the $(n+1)\times 2$ matrix with columns $\begin{bmatrix}
        e_0 & e_1 + \dots +e_n
    \end{bmatrix}$. Then the remaining eigenvectors lie in the column span of $U$, so the signs of the remaining eigenvalues are the same as the signs of the eigenvalues of $U^T M U$.

    We compute
    \begin{align*}U^T M U &= \begin{pmatrix}
        a & nb\\
        nb & n(b+(n-1)c)
    \end{pmatrix}
    \end{align*}
    Since this is a $2\times 2$ matrix with nonnegative entries, and since it is not the zero matrix, it has at least one positive eigenvalue. Moreover, it has exactly one positive eigenvalue if and only if its determinant is nonpositive, which happens if and only if
    $$ab+(n-1)ac\leq nb^2.$$
    
    Therefore, $M$ has Lorentzian signature if and only if $b \leq c$ and $ab + (n-1)ac \leq nb^2$. To complete the proof, we note that if $M$ is Lorentzian, then $a \leq b$ by \cref{thm: dom implies less}, since $(3) \succ (2,1)$ in dominance order.
\end{proof}

\begin{figure}[h]
    \centering
    \includegraphics[width=0.75\linewidth]{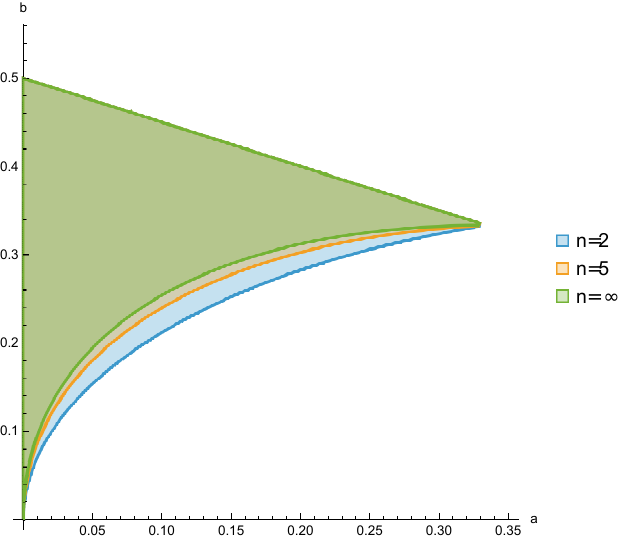}
    \caption{Regions defined by the conditions of \cref{thm:sym-cubic-lc-iff} for $n = 2, 5$ and as $n\to\infty$, intersected with the hyperplane $a+b+c = 1$. Note that each region is homeomorphic to a closed Euclidean ball, as shown in \cref{thm:proj sym lor=ball}.}
    \label{fig:symm_cubic_regions}
\end{figure}

We can then use the conditions derived in~\cref{thm:sym-cubic-lc-iff} and take $n\rightarrow \infty$ to obtain the following characterization of cubic Lorentzian symmetric functions. Interestingly, note that it suffices to check the dominance linear inequalities from \cref{thm: dom implies less} and log-concavity of coefficients with respect to dominance:
\begin{corollary}
    The symmetric function $f=a \widetilde{m}_3+b\widetilde{m}_{21}+c \widetilde{m}_{111}\in \Lambda^3$ is Lorentzian if and only if $0\leq a\leq b\leq c$ and $ac\leq b^2.$
\end{corollary}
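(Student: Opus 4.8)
The plan is to obtain this as the $n \to \infty$ limit of \cref{thm:sym-cubic-lc-iff}. First I would reduce the symmetric-function statement to a family of finite-variable statements: by \cref{def:lorentzian-symmetric-function}, $f$ is Lorentzian if and only if $f_m$ is Lorentzian for every $m$, and since $f_1, f_2$ are specializations of $f_3$ (and specialization preserves Lorentzianity, as used in \cref{prop:specialization-descending-chain}), this is the same as requiring $f_m$ Lorentzian for all $m \ge 3$. Writing $m = n+1$ and invoking \cref{thm:sym-cubic-lc-iff}, the condition becomes: $0 \le a \le b \le c$ together with the infinite family of quadratic inequalities $ab + (n-1)ac \le nb^2$ for all $n \ge 2$. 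So everything reduces to showing that, assuming $0 \le a \le b \le c$, this family is equivalent to the single inequality $ac \le b^2$.

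For the forward implication, suppose $ac \le b^2$. Then for each $n \ge 2$ we have $(n-1)ac \le (n-1)b^2$ (using $n-1 \ge 0$), and $ab \le b^2$ (using $0 \le a \le b$); adding the two gives $ab + (n-1)ac \le nb^2$, so every inequality in the family holds. For the reverse implication, I would rearrange $ab + (n-1)ac \le nb^2$ into $(n-1)(ac - b^2) \le b(b-a)$; the right-hand side is a fixed nonnegative number, so if $ac - b^2 > 0$ the left-hand side would exceed it for $n$ large, a contradiction, forcing $ac \le b^2$. (Equivalently: divide by $n$ to get $\tfrac{ab}{n} + \tfrac{n-1}{n}\,ac \le b^2$ and let $n \to \infty$.)

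I do not expect a genuine obstacle here — the heart of the argument is a one-line limiting/algebraic manipulation once \cref{thm:sym-cubic-lc-iff} is in hand. The only points meriting a moment of care are the degenerate case $b = 0$ (where $0 \le a \le b$ forces $a = 0$, hence $ac = 0 = b^2$ and all the quadratic inequalities hold trivially), and making sure the $n \to \infty$ reduction is justified combinatorially, via the descending chain of \cref{prop:specialization-descending-chain}, rather than by any analytic continuity argument.
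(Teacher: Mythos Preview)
Your proposal is correct and follows exactly the approach the paper indicates: take the finite-variable characterization of \cref{thm:sym-cubic-lc-iff} and pass to the limit $n\to\infty$, with the descending-chain observation handling the small-$m$ cases. The paper states the corollary without a written proof beyond the phrase ``take $n\to\infty$,'' so your argument is in fact a more detailed rendering of the same idea.
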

We also note that cubic Lorentzian symmetric polynomials are Schur-positive.

\begin{corollary}
    If $f\in L^3_n\cap \Lambda^3_n$, then $f$ is $s$-positive.
\end{corollary}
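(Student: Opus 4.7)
The plan is to simply expand $f = a\tilde{m}_3 + b\tilde{m}_{21} + c\tilde{m}_{111}$ in the Schur basis and check that the resulting coefficients are nonnegative under the hypotheses provided by \cref{thm:sym-cubic-lc-iff}. Since the Schur polynomials in degree three are $s_3, s_{21}, s_{111}$, the whole argument reduces to a single $3 \times 3$ change of basis.

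First I would compute the conversions $m_3 = 6\tilde{m}_3$, $m_{21} = 2\tilde{m}_{21}$, $m_{111} = \tilde{m}_{111}$, then combine with the Kostka expansion $s_3 = m_3 + m_{21} + m_{111}$, $s_{21} = m_{21} + 2 m_{111}$, $s_{111} = m_{111}$ to obtain
\[
s_3 = 6\tilde{m}_3 + 2\tilde{m}_{21} + \tilde{m}_{111}, \qquad s_{21} = 2\tilde{m}_{21} + 2\tilde{m}_{111}, \qquad s_{111} = \tilde{m}_{111}.
\]
Writing $f = \alpha s_3 + \beta s_{21} + \gamma s_{111}$ and matching coefficients, I would solve to get $\alpha = a/6$, $\beta = (3b-a)/6$, and $\gamma = (6c - 6b + a)/6$.

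Next, I would invoke \cref{thm:sym-cubic-lc-iff}, which guarantees that a Lorentzian symmetric cubic satisfies $0 \leq a \leq b \leq c$. From this chain, nonnegativity of the three Schur coefficients is immediate: $\alpha = a/6 \geq 0$ directly; $\beta = (3b-a)/6 \geq (3b - b)/6 = b/3 \geq 0$ since $a \leq b$; and $\gamma = (6(c-b) + a)/6 \geq 0$ since $c \geq b$ and $a \geq 0$.

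There is no real obstacle here — in fact, it is notable that only the linear dominance inequalities from \cref{thm: dom implies less} (which are a subset of the conditions in \cref{thm:sym-cubic-lc-iff}) are needed, and the quadratic constraint $ab + (n-1)ac \leq nb^2$ plays no role in Schur positivity. This also shows that the conclusion extends verbatim to the Lorentzian symmetric function case, since that setting retains the same linear chain of inequalities on $a,b,c$.
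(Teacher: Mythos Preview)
Your proof is correct and essentially identical to the paper's: both compute the Schur expansion of $f$ explicitly (your coefficients $\alpha=a/6$, $\beta=(3b-a)/6$, $\gamma=(6c-6b+a)/6$ match the paper's $\frac{a}{6}s_3+\frac{1}{2}(b-\frac{a}{3})s_{2,1}+(\frac{a}{6}-b+c)s_{1,1,1}$) and then verify nonnegativity using only the linear chain $0\le a\le b\le c$ from \cref{thm:sym-cubic-lc-iff}. Your closing remark that the quadratic inequality plays no role is a nice observation not made explicit in the paper.
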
 
\begin{proof}
    Suppose $f = a \til{m}_3 + b\til{m}_{21} + c\til{m}_{111}$ is Lorentzian. By direct computation, we note that the expansion of $f$ in the Schur basis is
    $$f=\left(\frac{a}{6}-b+c\right)s_{1,1,1}+\frac{1}{2}\left(b -\frac{a}{3}\right)s_{2,1}+\frac{a}{6} s_3.$$
    This immediately implies $s$-positivity since $b\leq c$ and $b\geq a\geq  \frac{a}{3}$.
\end{proof}

However, unlike the quadratic case, Lorentzianity is no longer equivalent to normalized Schur positivity. For example, $4Ns_{111}+Ns_{21}+Ns_{3} = \til{m}_{3} + 2\til{m}_{21} + 7\til{m}_{111}$ is normalized Schur positive but not Lorentzian, even when restricted to three variables. Normalized Schur positivity is also no longer a necessary condition, as one can see from the corollary below.

\begin{corollary}\label{cor:cubic-lor-normalized-schur-expansion}
Let $f=a Ns_3+b Ns_{2,1}+c Ns_{1,1,1}.$ Then, $f$ is Lorentzian if and only if $a,b,b + c\geq 0$ and $ac - \frac{1}{n}a(b + c)\leq b^2.$
\end{corollary}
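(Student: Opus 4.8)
The plan is to reduce \Cref{cor:cubic-lor-normalized-schur-expansion} directly to \Cref{thm:sym-cubic-lc-iff} by a change of basis. First I would recall the relationship between the normalized Schur basis and the normalized monomial basis in degree three, namely $Ns_{1,1,1} = \til{m}_{111}$, $Ns_{2,1} = \til{m}_{21} + 3\til{m}_{111}$, and $Ns_3 = \til{m}_3 + \til{m}_{21} + \til{m}_{111}$; these follow from the Kostka numbers $K_{21,111}=2$ (so the monomial coefficient is $2/3!\cdot 3! = $ adjusted by normalization) — I would compute these carefully since the normalization factors $\lambda!$ intervene. Writing $f = aNs_3 + bNs_{2,1} + cNs_{1,1,1}$ in the $\til m$-basis then gives $f = \til{m}_3\cdot a + \til{m}_{21}(a+b) + \til{m}_{111}(a + 3b + c)$, i.e.\ the $\til m$-coefficients are $a' = a$, $b' = a+b$, $c' = a+3b+c$ (the exact constants to be verified against the degree-three Schur-to-monomial transition used already in \Cref{cor:cubic-lor-normalized-schur-expansion}'s neighboring corollaries).

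The second step is to substitute these into the conditions of \Cref{thm:sym-cubic-lc-iff}, which characterize Lorentzianity of $f \in \Lambda^3_{n+1}$ as $0 \le a' \le b' \le c'$ together with $a'b' + (n-1)a'c' \le n(b')^2$. I would then translate each inequality: $0 \le a'$ becomes $a \ge 0$; $a' \le b'$ becomes $a \le a+b$, i.e.\ $b \ge 0$; $b' \le c'$ becomes $a+b \le a+3b+c$, i.e.\ $2b + c \ge 0$ — here I would need to double-check whether this matches the stated condition $b+c \ge 0$, and if there is a discrepancy it likely signals a different normalization convention, so this bookkeeping is where I would be most careful. Finally the quadratic inequality $a'b' + (n-1)a'c' \le n(b')^2$ becomes a quadratic relation in $a,b,c$ which, after expansion and simplification, should collapse to $ac - \tfrac1n a(b+c) \le b^2$ (possibly after absorbing lower-order cross terms that cancel).

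The main obstacle I anticipate is purely the algebraic bookkeeping: getting the Schur-to-normalized-monomial transition matrix exactly right (including the $\lambda!$ normalization), and then verifying that the messy quadratic $a'b' + (n-1)a'c' - n(b')^2 \le 0$ simplifies precisely to the clean form $ac - \tfrac1n a(b+c) \le b^2$. This is routine but error-prone; I would organize it by first writing $n(b')^2 - a'b' - (n-1)a'c' = n(b')^2 - a'(b' + (n-1)c')$ and then substituting, expanding in $a,b,c$, and collecting. I would also note explicitly that the linear conditions $a \ge 0$, $b \ge 0$, $b+c\ge 0$ suffice to guarantee $0 \le a' \le b' \le c'$, which is what feeds into the hypothesis of \Cref{thm:sym-cubic-lc-iff}. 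No genuinely new idea is needed beyond the change of basis; the content is entirely in \Cref{thm:sym-cubic-lc-iff}.
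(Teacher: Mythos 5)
Your approach is exactly the paper's: rewrite $f$ in the $\widetilde{m}$-basis and feed the resulting coefficients into \Cref{thm:sym-cubic-lc-iff}. The one thing that needs fixing is the transition coefficient you wrote down for $Ns_{21}$: you correctly cite $K_{21,111}=2$, but then write $Ns_{21}=\widetilde{m}_{21}+3\widetilde{m}_{111}$. Since $N$ is linear and $\widetilde{m}_\mu=N(m_\mu)$, the $\widetilde{m}$-coefficients of $Ns_\lambda$ are just the Kostka numbers $K_{\lambda\mu}$ with no further $\lambda!$ adjustment; so $Ns_{21}=\widetilde{m}_{21}+2\widetilde{m}_{111}$ and hence $c'=a+2b+c$ rather than $a+3b+c$. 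This removes the discrepancy you flagged: $b'\le c'$ becomes $a+b\le a+2b+c$, i.e.\ $b+c\ge 0$ as stated, and the quadratic $a'b'+(n-1)a'c'\le n(b')^2$ expands to $na^2+(2n-1)ab+(n-1)ac\le na^2+2nab+nb^2$, i.e.\ $(n-1)ac-ab\le nb^2$, which after dividing by $n$ is exactly $ac-\tfrac{1}{n}a(b+c)\le b^2$. With this correction, your argument is the paper's argument.
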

\begin{proof}
    Changing to the normalized monomial basis, we have
    \begin{align*}
        f &= a(\til{m}_{111} + \til{m}_{21} + \til{m}_{3}) + b(2\til{m}_{111} + \til{m}_{21}) + c(\til{m}_{111} + \til{m}_{21} + \til{m}_{3})\\
        &= a \til{m}_3 + (a + b)\til{m}_{21} + (a + 2b + c)\til{m}_{111}.
    \end{align*}

    By \cref{thm:sym-cubic-lc-iff}, $f$ is Lorentzian if and only if $0 \leq a \leq a+b \leq a+ 2b + c$ and $a(a + b) + (n-1)a(a+2b + c) \leq n(a+b)^2$. The first chain of inequalities gives us exactly $a, b, b+c \geq 0$. The latter inequality can be rearranged to $ac - \frac{1}{n}a(b + c) \leq b^2$, as desired.
\end{proof}
This implies that Lorentzian cubics need not be normalized Schur positive. For example, $f = N s_3 + 2 N s_{21} - N s_{111} = \til{m}_3 + 3\til{m}_{21} + 4\til{m}_{111}$ is not normalized Schur positive, but it satisfies the conditions of \cref{cor:cubic-lor-normalized-schur-expansion} and is therefore Lorentzian.

\subsection{Symmetric Quartics}\label{sec:symmetric-quartic}
We continue our study of low-degree polynomials by fully classifying Lorentzian symmetric polynomials of degree four.

\begin{theorem}~\label{thm:sym-lor-4}
    Let $f=a \widetilde{m}_4+b\widetilde{m}_{31}+c \widetilde{m}_{22}+d\widetilde{m}_{211}+e\widetilde{m}_{1111}\in \RR_{\geq 0}[x_1,\ldots,x_n]$ be a homogeneous symmetric quartic polynomial in $n\geq 4$ variables. Then $f$ is Lorentzian if and only if the following coefficient inequalities hold:
    \begin{itemize}
        \item[(1)] $0\leq a \leq b \leq c \leq d \leq e$
        \item[(2)] $a(c+d(n-2)) \leq (n-1)b^2$
        \item[(3)] $(b+c)(d+e(n-3)) \leq 2(n-2)d^2$
    \end{itemize}
\end{theorem}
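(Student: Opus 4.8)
The strategy is to apply \cref{thm:sym lor char} (the symmetric-polynomial version) with $d=4$, so that we only need to verify conditions (M), (D), and (H') for the partitions $\mu\vdash d-2=2$, namely $\mu=(2)$ and $\mu=(1,1)$. Condition (D) is precisely the chain (1), since the dominance order on partitions of $4$ is the total order $(1^4)\prec(2,1,1)\prec(2,2)\prec(3,1)\prec(4)$, so $c_{(4)}\le c_{(3,1)}\le c_{(2,2)}\le c_{(2,1,1)}\le c_{(1^4)}$ reads as $a\le b\le c\le d\le e$ (together with $a\ge 0$). Condition (M) is then automatic: dominance order on partitions of $4$ being a chain, any nonempty downward-closed support has a unique maximal element, and (1) forces the support to be downward closed. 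So the whole content of the theorem is showing that (H') for the two relevant $\mu$'s is equivalent, given (1), to inequalities (2) and (3).

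First I would handle $\mu=(1,1)$. Here $\ell(\mu)=k=2$ and $\mu$ induces the single-block set partition $(\beta_1)=(\{1,2\})$ of $[k]$, so $\ell=1$ and the reduced Hessian $M(\mu)$ from \cref{thm:sym lor char} is $2\times 2$: one ``$a_1$'' entry coming from the repeated block of size $n_1=2$, and one ``$q'$'' entry coming from the tail of $n-k=n-2$ variables. Explicitly $\partial^{(1,1)}f$ has normalized-monomial coefficients $c_{(2,1,1)}=b$ on $\til m_2$-type terms, $c_{(2,2)}=c$, $c_{(2,1,1)}=d$ wait — I need to track which partitions of $4$ arise as $[\mu+e_i+e_j]$: adding two boxes to $(1,1)$ in the block of repeated $1$'s gives $(2,2)$ or (if to the same index) $(3,1)$, and mixing with a tail variable gives $(2,1,1)$ or $(1^4)$. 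Plugging these into the formulas for $a_t,b_{s,t},r_s,q'$ and invoking \cref{prop:signature-from-minors} (the $2\times2$ case: nonnegative matrix has $\le 1$ positive eigenvalue iff its determinant is $\le 0$) yields a single quadratic inequality in $a,b,c,d,e,n$; the claim is that this simplifies to (2) $a(c+(n-2)d)\le (n-1)b^2$. I would carry out this simplification carefully, using (1) to discard the correction term in $q'$ where needed (as in the proof of \cref{thm:sym lor char}, $Q(\mu)\succeq M(\mu)$ after the correction, so it suffices to test $M(\mu)$, but one must check the inequality is genuinely the stated one and not weaker).

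Next, $\mu=(2)$: now $k=1$, the set partition of $[k]$ is a single block of size $n_1=1$, so there are no linear eigenvalues from that block and $M(\mu)$ is again $2\times2$, with entries built from the partitions $[\mu+e_i+e_j]$ obtained by adding two boxes to $(2)$ — these are $(4)$, $(3,1)$, $(2,2)$, $(2,1,1)$ depending on whether boxes go on the existing part, on one new tail variable, or spread over two tail variables. Assembling $a_1, r_1, q'$ and again applying the $2\times2$ determinant test should give (3) $(b+c)(d+(n-3)e)\le 2(n-2)d^2$. The main obstacle I anticipate is purely bookkeeping: correctly identifying, for each $\mu$, which partition of $4$ labels each matrix entry (the index shifts $e_{m_t}, e_{k+1}, e_{k+2}$ in \cref{thm:sym lor char} are easy to misread), correctly tracking the $n_t$ and $n-k$ multiplicities, and then doing the determinant algebra so that it collapses exactly to the stated clean forms rather than to an equivalent-but-uglier expression. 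There is no conceptual difficulty beyond \cref{thm:sym lor char}; the work is in verifying that the two $2\times 2$ determinant conditions are literally (2) and (3), and in confirming that (1) suffices to guarantee all linear eigenvalues (from the $\mu=(1,1)$ block of size $2$) are nonpositive and that (M) holds, so that no further inequalities are needed.
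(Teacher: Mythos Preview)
Your approach is exactly the paper's: invoke \cref{thm:sym lor char}, note that dominance on partitions of $4$ is a chain so (M) is automatic and (D) is the chain (1), then compute the two $2\times 2$ reduced Hessians for $\mu=(2)$ and $\mu=(1,1)$ and read off their determinant conditions. One bookkeeping slip to fix: you have the two cases swapped. Your own analysis shows that $\mu=(1,1)$ produces the partitions $(3,1),(2,2),(2,1,1),(1^4)$ with coefficients $b,c,d,e$, so its determinant condition must be the inequality not involving $a$, namely (3); likewise $\mu=(2)$ produces $(4),(3,1),(2,2),(2,1,1)$ with coefficients $a,b,c,d$ and yields (2). Once you carry out the computation this will correct itself, and the determinants collapse cleanly to (2) and (3) with no leftover terms.
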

\begin{proof}
    By \cref{thm:sym-lor-supp-sufficiency}, $f$ satisfies the support condition if and only if its $m$-support is downward closed, since dominance order on partitions of $d = 4$ is totally ordered. We note that the set of linear inequalities (1) implies this condition is satisfied. Therefore, it suffices to show that $f$ satisfies (H') if and only if inequalities (2) and (3) are satisfied.

    By definition, $f$ satisfies (H') if and only if
    \[M(2)=\begin{pmatrix}
        a & (n-1) b\\
        (n-1) b & (n-1)(c + (n-2)d)
    \end{pmatrix}\quad\text{and}\quad M(11)=\begin{pmatrix}
        \frac{1}{2}(b + c) & (n-2)d\\
        (n-2)d & (n-2)(d +(n-3) e)
    \end{pmatrix}\]
    have Lorentzian signature. This happens if and only if
    \[a(c + (n-2)d) \leq (n-1)b^2 \quad\text{and}\quad b(d+(n-3)e) \leq 2(n-2)d^2.,\]
    as desired.
\end{proof}

By taking $n\to\infty$, we also get a semialgebraic description for the set of Lorentzian symmetric functions of degree four.

\begin{corollary}\label{cor:deg-4-lor-symm-fn}
    The symmetric function $f=a \widetilde{m}_4+b\widetilde{m}_{31}+c \widetilde{m}_{22}+d\widetilde{m}_{211}+e\widetilde{m}_{1111}\in \Lambda^4$ is Lorentzian if and only if all of the following hold:
    \begin{itemize}
        \item[(1)] $0 \leq a\leq b\leq c\leq d \leq e$
        \item[(2)] $ad\leq b^2$ 
        \item[(3)] $(b+c)e\leq 2d^2$.
    \end{itemize}
\end{corollary}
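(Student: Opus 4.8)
The plan is to take the limit $n \to \infty$ in the inequalities of \cref{thm:sym-lor-4}, being careful about how $n$ appears in each inequality and dividing through by the appropriate power of $n$ before letting $n$ grow. Concretely, I would first observe that condition (1), $0 \le a \le b \le c \le d \le e$, is independent of $n$, so it passes to the limit unchanged and remains both necessary and sufficient for the support/monotonicity part. The work is in conditions (2) and (3).

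For condition (2), rewrite $a(c + d(n-2)) \le (n-1) b^2$ as $a c + a d (n-2) \le (n-1) b^2$, divide by $n$, and send $n \to \infty$: the surviving terms give $a d \le b^2$. For the reverse direction, I need that if $ad \le b^2$ and (1) holds, then $a(c + d(n-2)) \le (n-1)b^2$ holds for all $n \ge 4$; since $c \le d$ one bounds $a(c + d(n-2)) \le a d (n-1) \le (n-1) b^2$, so the finite-$n$ inequality is actually implied. Similarly, condition (3), $(b+c)(d + e(n-3)) \le 2(n-2) d^2$, divided by $n$ in the limit gives $(b+c) e \le 2 d^2$; conversely, if $(b+c) e \le 2 d^2$ and (1) holds, then since $d \le e$ we get $(b+c)(d + e(n-3)) \le (b+c) e (n-2) \le 2 d^2 (n-2)$, recovering the finite-$n$ inequality. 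So in fact each finite-$n$ system is equivalent to the limiting system together with the chain (1), and the limit is clean.

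To package this into a proof, I would argue: by \cref{def:lorentzian-symmetric-function}, $f$ is a Lorentzian symmetric function if and only if $f_n$ is Lorentzian for all $n \ge 4$ (the cases $n < d$ being handled by restriction, using \cref{prop:specialization-descending-chain}, or simply noting that Lorentzianity in more variables implies it in fewer). By \cref{thm:sym-lor-4}, this holds if and only if (1), (2$_n$), and (3$_n$) hold for all $n$, where (2$_n$) and (3$_n$) are the $n$-dependent inequalities. The chain of bounds above shows that the infinite family $\{(2_n)\}_n$ is equivalent to the single inequality $ad \le b^2$ (given (1)), and likewise $\{(3_n)\}_n$ is equivalent to $(b+c)e \le 2d^2$ (given (1)). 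Combining yields the stated characterization.

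I don't anticipate a serious obstacle here — this is a routine limiting argument. The only subtlety worth stating carefully is the direction showing that the limiting inequalities plus the monotonicity chain (1) imply \emph{every} finite-$n$ inequality, which needs the bounds $c \le d$ and $d \le e$ from (1) rather than following from the limit alone; I would make sure to invoke (1) explicitly at those two points. One could alternatively cite \cref{thm:sym lor char} directly and compute the limiting reduced Hessians $M(\mu)$ for $\mu \in \{(2), (1,1)\}$, but recovering the result from \cref{thm:sym-lor-4} by passing to the limit is the shortest route.
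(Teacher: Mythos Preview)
Your proposal is correct and follows the same approach as the paper, which simply states the corollary follows ``by taking $n\to\infty$'' in \cref{thm:sym-lor-4} without further detail. You have in fact supplied the argument the paper omits: not only the limiting direction but also the observation that, using $c\le d$ and $d\le e$ from (1), the limiting inequalities $ad\le b^2$ and $(b+c)e\le 2d^2$ imply the finite-$n$ inequalities for every $n\ge 4$, so the infinite family of conditions genuinely collapses.
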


\begin{example}
    Observe that there are quartic Lorentzian symmetric polynomials that do not extend to Lorentzian symmetric functions. For example, consider
    \[f = \til{m}_4 + 2\til{m}_{31} + 2\til{m}_{22} + 5\til{m}_{211} + 5\til{m}_{1111}.\]
    Then $f_n$ is Lorentzian if and only if $n \leq 4$, so some restrictions of $f$ are Lorentzian polynomials, but $f$ itself is not a Lorentzian symmetric function.
\end{example}

We now briefly examine the relationship between Lorentzian symmetric polynomials and M-concave functions. Suppose $f = \sum \frac{c_\alpha}{\alpha!}x^\alpha$. By \cite[Corollary 3.16]{BrandenHuh20Lorentzian}, we know that if $\nu_f: \alpha \mapsto \log(c_\alpha)$ is an M-concave function, then $f$ is Lorentzian. In general, the converse fails even for quadratics. Starting in degree four, the converse does not hold for symmetric polynomials.
\begin{example}
    Let
    \[f = \frac{1}{256}\til{m}_4 + \frac{1}{16}\til{m}_{31} + \frac{3}{8}\til{m}_{22} + \frac{1}{2}\til{m}_{211} + \til{m}_{1111}.\]
    The coefficients satisfy the conditions in \cref{cor:deg-4-lor-symm-fn}, so $f$ is Lorentzian.
    
    However, we claim that $\nu_f$ is not M-concave. Indeed, consider $\alpha=2e_1 + 2e_2, \beta=e_1+e_2+e_3+e_4$, so $\alpha_1 > \beta_1$. Then in order for $\nu_f$ to be M-concave, we must have
    \begin{align*}
        \nu_f(\alpha) + \nu_f(\beta) &\leq \max\{\nu_f(\alpha - e_1 + e_3) + \nu_f(\beta+e_1-e_3), \nu_f(\alpha-e_1 + e_4) + \nu_f(\beta+e_1-e_4)\}.
    \end{align*}
    We note that $\alpha - e_1 + e_3 = (1,2,1,0)$ and $\alpha - e_1 + e_4 = (1,2,0,1)$. Since $f$ is symmetric, this implies that $x^{\alpha - e_1 + e_3}$ and $x^{\alpha - e_1 + e_4}$ have the same coefficient, and hence $\nu_f(\alpha - e_1 + e_3) = \nu_f(\alpha - e_1 + e_4) = \log(c_{211})$. A similar computation shows that $\nu_f(\beta+ e_1 - e_3) = \nu_f(\beta + e_1 - e_4) = \log(c_{211})$.

    Hence, in order for $\nu_f$ to be M-concave, we must have $c_{22}c_{1111} \leq c_{211}^2$. However, $c_{22}c_{1111} = \frac{3}{8}$ while $c_{211}^2 = \frac{1}{4}$, so $\nu_f$ is not M-concave.
\end{example}

\begin{remark}
    This only starts to happen in degree four. In degree three, one can check that a symmetric cubic is Lorentzian if and only if $\nu_f$ is M-concave. This differs from the non-symmetric case, where M-concavity may fail even for quadratics, as shown in \cite{BrandenHuh20Lorentzian}.
\end{remark}
\subsection{Symmetric Quintics}
Next, we classify Lorentzian symmetric functions of degree five. Given the complexity of the inequalities, we do not treat the polynomial case separately here, though the computations are essentially identical.
\begin{theorem}\label{thm:sym-lor-5}
    Let $p= a\widetilde{m}_5+b\widetilde{m}_{41}+c\widetilde{m}_{32}+d\widetilde{m}_{311}+e\widetilde{m}_{221}+f\widetilde{m}_{2111}+g\widetilde{m}_{11111}$ be a symmetric function of degree five with nonnegative coefficients. Then $f$ is  Lorentzian if and only if the following coefficient inequalities hold:
    \begin{itemize}
        \item[(1)] $0 \leq a \leq b\leq c\leq d\leq e\leq f\leq g$ 
        \item[(2)] $ad\leq b^2$
        \item[(3)] $(d+2e)g\leq 3f^2$ 
        \item[(4)] $bf\leq d^2$
        \item[(5)] $ cf\leq e^2$
        \item[(6)] $\det\begin{pmatrix}
            b & c & d\\
            c & c & e\\
            d & e & f\\
        \end{pmatrix}\geq 0$
    \end{itemize}
\end{theorem}
\begin{proof}
    We again apply \cref{thm:sym lor char}. Since the dominance order on partitions of $d=5$ is a total order, $\supp_m(f)$ vacuously must have a unique maximal element. Moreover, inequality (1) is exactly condition (D). Thus, it just remains to check the Hessian condition on $\partial^3f$, $\partial^{21} f$, and $\partial^{111} f$. Condition (H) says that $f$ is Lorentzian if $M(3),M(21),M(111)$ all have Lorentzian signature. Using the definition of $M(\mu)$, we have
    \begin{align*}
        M(3)&=\begin{pmatrix}
            a & b\\
            b & d
        \end{pmatrix} & 
        M(111)&= \begin{pmatrix}
            3(d+2e) & 3f\\
            3f & g
        \end{pmatrix} &
        M({21}) &= \begin{pmatrix}
            b & c & d\\
            c & c & e\\
            d & e & f
        \end{pmatrix}
    \end{align*}
    Taking the determinants of $M(3)$ and $M(111)$ give inequalities (2) and (3), so it just remains to study the signature of $M(21)$.
    
    By \cref{prop:signature-from-minors}, $M(21)$ has Lorentzian signature if and only if the following four inequalities all hold:
    \begin{enumerate}[(i)]
        \item $b^2-bc\leq 0$
        \item $bf\leq d^2$
        \item $cf\leq e^2$
        \item $\det(M(21))\geq 0$
    \end{enumerate}
    Already (i) fits into the linear chain given in (1), and the remaining inequalities are exactly conditions (4)-(6), which completes the proof.
\end{proof}

\subsection{Symmetric Sextics}
As our final low-degree case, we fully characterize Lorentzian symmetric functions of degree six.

\begin{theorem}~\label{thm:sym-lor-6}
    Let $f = \sum_{\alpha \vdash 6} c_\alpha \til{m}_\alpha$ be a symmetric function of degree six with nonnegative coefficients. Then $f$ is Lorentzian if and only if all of the following hold:
    \begin{itemize}
        \item Whenever $\lambda \preceq \mu$ in dominance order, we have $c_\mu \geq c_\lambda$.
        \item $c_6 c_{411} \leq c_{51}^2$
        \item $c_{2211}(c_{42} + c_{33}) \leq 2 c_{321}^2$
        \item $c_{1^6}(c_{3111} + 3c_{2211}) \leq 4 c_{21111}^2$
        \item Each of the following matrices has exactly one positive eigenvalue\footnote{By \cref{prop:signature-from-minors}, testing the signature of either of these matrices reduces to checking three quadratic inequalities and one cubic inequality, but for the sake of compactness and clarity, we leave it in matrix form.}
            \begin{align*}
                Q_{31} &= \begin{pmatrix}
                    c_{51} & c_{42} & c_{411}\\
                    c_{42} & c_{33} & c_{321}\\
                    c_{411} & c_{321} & c_{311} 
                \end{pmatrix} &
                Q_{211} &= \begin{pmatrix}
                    c_{411} & c_{321} & c_{3111}\\
                    c_{321} & \frac{1}{2}(c_{321} + c_{222}) & c_{2211}\\
                    c_{3111} & c_{2211} & c_{21111}
                \end{pmatrix}.
            \end{align*}
    \end{itemize}
\end{theorem}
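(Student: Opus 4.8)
The plan is to invoke the general characterization of \cref{thm:sym lor char} with $d = 6$ and to unwind what conditions (D) and (H) become for each partition $\mu \vdash 4$. Condition (D) is precisely the first bullet, so all of the content lies in (H), which requires the reduced Hessian $M(\mu)$ to have at most one positive eigenvalue for each $\mu \in \{(4),(3,1),(2,2),(2,1,1),(1^4)\}$. For each such $\mu$ I would read off the set partition of $[\ell(\mu)]$ that it induces --- the block sizes $n_t$ and the boundary indices $m_t$ --- and substitute directly into the formulas for $a_t, b_{s,t}, r_s, q$ given in \cref{thm:sym lor char}.

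For $\mu \in \{(4),(2,2),(1^4)\}$ the induced set partition consists of a single block, so $\ell = 1$ and $M(\mu)$ is the $2 \times 2$ matrix $\left(\begin{smallmatrix} a_1 & r_1\\ r_1 & q \end{smallmatrix}\right)$. Since every entry is a sum of nonnegative coefficients $c_\alpha$, this matrix has at least one positive eigenvalue unless it is zero, so in all cases it has at most one positive eigenvalue if and only if $\det M(\mu) \le 0$. Substituting the formulas gives $M((4)) = \left(\begin{smallmatrix} c_6 & c_{51}\\ c_{51} & c_{411}\end{smallmatrix}\right)$, $M((2,2)) = \left(\begin{smallmatrix} 2(c_{42}+c_{33}) & 2c_{321}\\ 2c_{321} & c_{2211}\end{smallmatrix}\right)$, and $M((1^4)) = \left(\begin{smallmatrix} 4(c_{3111}+3c_{2211}) & 4c_{21111}\\ 4c_{21111} & c_{1^6}\end{smallmatrix}\right)$, and the three determinant inequalities rearrange exactly to the second, third, and fourth bullets. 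For $\mu = (3,1)$ and $\mu = (2,1,1)$ the induced set partition has two blocks, so $M(\mu)$ is $3 \times 3$; a direct substitution gives $M((3,1)) = Q_{31}$ verbatim, while $M((2,1,1))$ agrees with $Q_{211}$ after the diagonal congruence $\diag(1,\tfrac12,1)\, M((2,1,1))\, \diag(1,\tfrac12,1) = Q_{211}$, the factor $\tfrac12$ arising because the block $\beta_2$ of $\mu = (2,1,1)$ has size $n_2 = 2$. Because congruence preserves inertia, the condition that $M((2,1,1))$ have at most one positive eigenvalue is equivalent to the same condition on $Q_{211}$, which is the fifth bullet. (If one wishes, each of the two $3 \times 3$ conditions can be further unpacked via \cref{prop:signature-from-minors} into three quadratic principal-minor inequalities and one cubic determinant inequality, as the footnote indicates.)

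The one feature genuinely new relative to the degree $\le 5$ cases is that dominance order on partitions of $6$ is not a total order, so the support condition (M) of \cref{thm:sym lor char} is no longer vacuous and must be recovered from the listed conditions. I would argue this as follows: (D) forces $\supp_m(f)$ to be a down-set, so if (M) failed it would contain two incomparable maximal elements, and a finite check shows that the only incomparable pairs of partitions of $6$ are $\{(4,1,1),(3,3)\}$ and $\{(2,2,2),(3,1,1,1)\}$. If $(4,1,1)$ and $(3,3)$ were both maximal, then maximality forces $c_6 = c_{51} = c_{42} = 0$, so the first row and column of $Q_{31}$ equal $(0,0,c_{411})$ and $\det Q_{31} = -c_{33}c_{411}^2 < 0$, contradicting the $Q_{31}$ bullet; if $(2,2,2)$ and $(3,1,1,1)$ were both maximal, then $c_6 = c_{51} = c_{42} = c_{411} = c_{33} = c_{321} = 0$, so the first row and column of $Q_{211}$ equal $(0,0,c_{3111})$ and $\det Q_{211} = -\tfrac12 c_{222}c_{3111}^2 < 0$, contradicting the $Q_{211}$ bullet. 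Hence (M) holds automatically, and \cref{thm:sym lor char} then gives the stated equivalence.

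I expect the main obstacle to be largely organizational: correctly extracting the block data $(n_t, m_t)$ from each $\mu \vdash 4$, carrying the right scalars through the entries of $M(\mu)$, and spotting the diagonal congruence that identifies $M((2,1,1))$ with $Q_{211}$. The one conceptually delicate point is that condition (M) is here a consequence of the other conditions rather than a separate hypothesis; this is a short argument once one notices that there are exactly two incomparable pairs of partitions of $6$, but it is the step most easily overlooked.
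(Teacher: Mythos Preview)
Your proposal is correct and follows essentially the same route as the paper: invoke \cref{thm:sym lor char}, compute the reduced matrices $M(\mu)$ for each $\mu\vdash 4$, read off the three $2\times 2$ determinant inequalities, identify $M((3,1))=Q_{31}$ and $M((2,1,1))\cong Q_{211}$ via the congruence $\diag(1,\tfrac12,1)$, and then recover condition (M) from the $Q_{31}$ and $Q_{211}$ bullets by checking the two incomparable pairs $\{(4,1,1),(3,3)\}$ and $\{(3,1,1,1),(2,2,2)\}$. Your explicit entries for $M((2,2))$ and $M((2,1,1))$ are in fact cleaner than the paper's displayed versions (which contain minor typos in the $(1,1)$ entries), and your determinant and congruence computations confirm the stated bullets.
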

\begin{proof}
    By \cref{thm:sym lor char}, we know that $f$ is Lorentzian if and only if it satisfies conditions (M), (D), and (H). We note that (D) is also a condition above, so it remains to consider (M) and (H). After applying our reduction, we are left with
    \begin{gather*}
        M(4) = \begin{pmatrix}
            c_6 & c_{51}\\
            c_{51} & c_{411}
        \end{pmatrix} \qquad\qquad M(31) = \begin{pmatrix}
            c_{51} & c_{42} & c_{411}\\
            c_{42} & c_{33} & c_{321}\\
            c_{411} & c_{321} & c_{3111}
        \end{pmatrix}\\
        M(22) = \begin{pmatrix}
            c_{32} + c_{42} & 2c_{321}\\
            2c_{321} & c_{2211}
        \end{pmatrix} \qquad\qquad M(211) = \begin{pmatrix}
            c_{411} & 2c_{321} & c_{3111}\\
            2c_{321} & c_{321} + c_{222} & 2c_{2211}\\
            c_{3111} & 2c_{2211} & c_{21111}
        \end{pmatrix}\\
        M(1111) = \begin{pmatrix}
            4(3c_{2211} + c_{3111}) & 4c_{21111}\\
            4c_{21111} & c_{1^6}
        \end{pmatrix}
    \end{gather*}
    
    First, suppose $f$ is Lorentzian. The determinants of $M(4), M(22)$, and $M(1111)$ give the desired quadratic inequalities. Moreover, $Q_{31} = M(31)$, and $Q_{211}$ is obtained from $M(211)$ by scaling the second row and column by $1/2$. Hence $Q_{31}$ and $Q_{211}$ both have Lorentzian signature, as desired.

    On the other hand, suppose that the conditions above hold. We note that (D) holds by assumption. Moreover, as noted above, the quadratic and matrix inequalities exactly give us (H). Thus, it just remains to show that $\supp_m(f)$ has a unique maximal element in dominance order. Indeed, the only incomparable partitions of six are $\{(4,1,1), (3,3)\}$ and $\{(3,1,1,1), (2,2,2)\}$. If the former pair are both maximal in $\supp_m(f)$, then
    \[\det(Q_{31}) = \det\begin{pmatrix}
        0 & 0 & c_{411}\\
        0 & c_{33} & c_{321}\\
        c_{411} & c_{321} & c_{311}
    \end{pmatrix} = -c_{411}^2c_{33} < 0,\]
    so $Q_{31}$ does not have Lorentzian signature. Similarly, if the latter pair are both maximal, then
    \[\det(Q_{211}) = \det\begin{pmatrix}
        0 & 0 & c_{3111}\\
        0 & \frac{1}{2}c_{222} & c_{2211}\\
        c_{3111} & c_{2211} & c_{21111}
    \end{pmatrix} = -\frac{1}{2}c_{3111}^2c_{222} < 0,\]
    so $Q_{211}$ does not have Lorentzian signature. Hence, if $\supp_m(f)$ does not have a unique maximal element, then either $Q_{31}$ or $Q_{211}$ does not have Lorentzian signature, as desired.
\end{proof}

One surprising observation is that the condition in \cref{thm:sym lor char} that $\supp_m(f)$ has a unique maximal element is redundant in the degree six case, which is the smallest degree in which it is not vacuously satisfied. It is not known if this condition continues to be redundant in higher degrees.

\section{Applications to Known Results}\label{sec:simplifying-results}
In addition to fully characterizing Lorentzian symmetric polynomials in low degrees, our techniques also allow us to significantly simplify some known proofs that certain symmetric functions are Lorentzian. In \cref{subsec:chromatic-symmetric}, we simplify the linear algebra in the proof for chromatic symmetric functions, originally proved in \cite{Matherne24chromaticsymmetric}. We also give a simplified combinatorial proof for two-column Schur functions. The full Schur function case was proved in \cite{huh2022schurlorentzian} using volume polynomials of varieties. The two-column case was proved combinatorially in \cite{qinthesis}, and we give a simplified version of this proof in \cref{subsec:two-column-schur}.

\subsection{Lorentzianity of Chromatic Symmetric Functions}\label{subsec:chromatic-symmetric}
In \cite[Theorem 6.8]{Matherne24chromaticsymmetric}, \citeauthor{Matherne24chromaticsymmetric} prove that the chromatic symmetric functions of indifference graphs of abelian Dyck paths are Lorentzian. In this section, we use the computational reductions found in \cref{subsec:reductions} to give a shorter proof of this fact. Note that we still use the same underlying combinatorics and inequalities, but we are now able to forgo Schur complements entirely and to simplify the linear algebra considerably.

We begin by recalling some definitions.
\begin{definition}
    A \emph{Dyck path} $d$ of length $n$ is a lattice path from $(0,0)$ to $(n,n)$ that does not go below the diagonal $y = x$. For a Dyck path $d$, we define the indifference graph $G(d)$ as the graph with vertex set $[n]$ where there is an edge between $i$ and $j$, with $i < j$ if the square in column $i$ and row $n+1-j$ is between the path and the diagonal. We say that $d$ is abelian if $G(d)$ is cobipartite.
\end{definition}

\begin{definition}
    For a graph $G$ on vertex set $[n]$, the \emph{chromatic symmetric polynomial} of $G$ is
    \[X_G := \sum_{\substack{\kappa : V(G) \to \ZZ_+\\\text{proper coloring}}} x_{\kappa(1)}x_{\kappa(2)}\dots x_{\kappa(n)}.\]
\end{definition}

In \cite{Matherne24chromaticsymmetric}, \citeauthor{Matherne24chromaticsymmetric} prove that chromatic symmetric polynomials of the indifference graphs of abelian Dyck paths are Lorentzian.
\begin{theorem}[{\cite[Section 6.2]{Matherne24chromaticsymmetric}}]
    Let $d$ be an abelian Dyck path. Then $X_{G(d)}$ is a Lorentzian symmetric function.
\end{theorem}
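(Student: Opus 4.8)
The goal is to show that $X_{G(d)}$ is a Lorentzian symmetric function for any abelian Dyck path $d$. The natural strategy is to feed this into the machinery of \cref{thm:sym lor char}: a symmetric function is Lorentzian if and only if conditions (M), (D), and (H) hold. The plan is therefore to expand $X_{G(d)}$ in the normalized monomial basis $\til{m}_\alpha$ and then verify these three conditions combinatorially, using the cobipartite structure of $G(d)$ to control the coefficients $c_\alpha$. A first reduction: since $d$ is abelian, $G(d)$ is cobipartite, meaning its vertex set $[n]$ splits into two cliques $A$ and $B$. A proper coloring of a clique must use distinct colors on its vertices, so the number of colors used on all of $V(G)$ is at least $\max(|A|,|B|)$ and the monomials appearing in $X_{G(d)}$ are indexed by compositions with all parts at most $2$ — hence $\supp_m(X_{G(d)})$ consists of partitions of $n$ into parts of size $\leq 2$, i.e., partitions of the form $(2^j, 1^{n-2j})$. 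This totally orders the support under dominance, so condition (M) is automatic, and condition (D) becomes a chain of inequalities $c_{(2^j,1^{n-2j})} \le c_{(2^{j-1},1^{n-2j+2})}$, which should follow from a direct combinatorial count of proper colorings with a prescribed color-multiplicity profile.

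\textbf{The main work: condition (H).} The heart of the proof is verifying the Hessian condition. For each partition $\mu \vdash n-2$ (which, by the support restriction, again has all parts $\le 2$, say $\mu = (2^i, 1^{n-2-2i})$), we must show the reduced matrix $M(\mu)$ from \cref{thm:sym lor char} has at most one positive eigenvalue. Because the support is so restricted, $\mu$ has at most two distinct part-sizes, so $M(\mu)$ is at most $3\times 3$, and by \cref{prop:signature-from-minors} checking its signature reduces to a handful of $2\times 2$ minor inequalities plus possibly one $3\times 3$ determinant inequality. Each such inequality is an inequality among the coefficients $c_\alpha$, which are counts of proper colorings of $G(d)$ with given color-usage patterns; the task is to establish these as log-concavity-type inequalities for those counts. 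This is exactly the combinatorial content of \citeauthor{Matherne24chromaticsymmetric}'s argument, and the point of this section is that our reduction lets us bypass their Schur complement computations: instead of manipulating large Hessians, we only ever confront small explicit matrices whose minors are the relevant coloring-count inequalities. I would organize this as: (a) derive a clean formula for $c_\alpha$ in terms of the clique sizes and the edge structure of $G(d)$ using inclusion–exclusion over which color classes are ``split'' across $A$ and $B$; (b) translate each minor of $M(\mu)$ into such a formula; (c) verify the resulting inequalities, which are the same elementary inequalities appearing in \cite{Matherne24chromaticsymmetric}.

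\textbf{Expected obstacle.} The main obstacle is bookkeeping: correctly identifying which coefficients $c_\alpha$ appear in the entries $a_t, b_{s,t}, r_s, q$ of $M(\mu)$ for each $\mu$, and then matching the resulting minor inequalities to known or easily-provable coloring inequalities. There is also a mild subtlety in that we must handle all $n$ simultaneously to conclude the symmetric-function statement rather than merely the polynomial statement for one fixed $n$; but \cref{thm:sym lor char} already packages this for us, since the limiting matrix $M(\mu)$ is independent of $n$. I expect that once the coefficient formulas are in place, the inequalities themselves are precisely the elementary ones already verified in \cite{Matherne24chromaticsymmetric}, so the novelty is entirely in the streamlined linear-algebraic reduction rather than in new combinatorics.
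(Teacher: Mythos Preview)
Your plan is correct and is essentially the paper's own argument, just routed through \cref{thm:sym lor char} rather than applying \cref{lem:Tracy trick} to the finite-variable Hessian directly. Two minor streamlinings you will discover once you execute it: (i) when $\mu=(2^i,1^j)$ with $i,j>0$, the first row and column of your $3\times 3$ matrix $M(\mu)$ vanish identically (every entry there is a coefficient $c_\alpha$ with a part $\geq 3$, hence zero by the cobipartite support restriction), so you are really facing only a $2\times 2$ determinant condition, exactly as the paper obtains; and (ii) rather than deriving the coefficients by inclusion--exclusion, the paper simply quotes the formula $X_{G(d)}=\sum_i i!\,(n-2i)!\,r_i\,m_{2^i1^{n-2i}}$ from \cite[Theorem~6.8]{Matherne24chromaticsymmetric}, where $r_i$ are rook numbers on the associated Ferrers board, so that $c_{(2^i,1^{n-2i})}=2^i\,i!\,(n-2i)!\,r_i$ in the $\til m$-basis and the single remaining quadratic inequality is precisely the one already verified in \cite[Propositions~6.10,~6.12]{Matherne24chromaticsymmetric}.
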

\begin{proof}
    Following \cite[Theorem 6.8]{Matherne24chromaticsymmetric}, we know that 
    \[X_{G(d)} = \sum_{i}i! (n-2i)! r_i m_{2^i 1^{n-2i}},\]
    where $r_i$ is the number of non-attacking rook placements on the Ferrers board $B_\mu$ associated to $G(d)$.

    Since $X_{G(d)}$ has degree at most two in each variable, it suffices to consider $\dl^\alpha X_{G(d)}$ for all partitions $\alpha$ of the form $\alpha = (2^{k-1}, 1^{n-2k})$.

    Partition the variables into sets $Y_1, Y_2, Y_3$, where $Y_1 = \{x_1,\dots,x_{n-2k}\}, Y_2 = \{x_{n-2k+1}, \dots, x_{n-k+1}\}, Y_3 = \{x_{n-k+2},\dots,x_n\}$, and consider $\dl_{Y_1}\dl_{Y_3}^2 X_{G(d)}$. By direct computation,
    \begin{multline*}
         \dl_{Y_1}\dl_{Y_3}^2 X_{G(d)}
        = 2^{k+1}(k+1)!(n-2k-2)! r_{k+1}m_{11}(Y_1)\\
            + 2^{k-1} k!(n-2k)! r_k(m_2(Y_2)+ 2m_1(Y_1)m_1(Y_2)) \\
            + 2^{k-1}(k-1)!(n-2k+2)! r_{k-1}m_{11}(Y_2),
    \end{multline*}
    so
    \[\nabla^2 \dl^{\alpha} X_{G(d)} = \left(\begin{array}{ccc|ccc}
        0 & & a & & &\\
        & \ddots & & & b & \\
        a & & 0 & & &\\\hline
        & & & b & & c\\
        & b & & & \ddots &\\
        & & & c & & b
    \end{array}\right),\]
    where
    \begin{align*}
        a &= 2^{k+1}(k+1)!(n-2k-2)!r_{k+1}\\
        b &= 2^k k!(n-2k)!r_k\\
        c &= 2^{k-1} (k-1)!(n-2k+2)! r_{k-1}.
    \end{align*}
    Using our block matrix techniques, if $X_{G(d)}$ has $r$ variables, this matrix has Lorentzian signature if and only if $a \geq 0$, $b \leq c$, and
    \[\det\begin{pmatrix}
        (n-2k)(n-2k-1)a & (n-2k)(r-n+k+1)b\\
        (n-2k)(r-n+k+1) b & (r-n+k)(r-n+k+1)c + (r-n+k+1)b
    \end{pmatrix} \leq 0,\]
    which happens if and only if
    \[(n-2k-1)a(b + (r-n+k)c) - (n-2k)(r-n+k+1)b^2 \leq 0.\]
    These inequalities are verified in \cite[Propositions 6.10, 6.12]{Matherne24chromaticsymmetric} using properties of rook numbers on Ferrers boards.
\end{proof}

\subsection{Normalized Two-Column Schur Functions}\label{subsec:two-column-schur}
In \cite{qinthesis}, the second author gives a combinatorial proof that two-column Schur functions are denormalized Lorentzian. In this section we give a simplified version of this proof, again eliminating the need for Schur complements while proving the same underlying inequalities.

First, we recall the \emph{ballot numbers} $C_{k,\ell}$ where $$C_{k,\ell}=\frac{(k+\ell)!}{\frac{(k+1)!}{k-\ell+1}\cdot \ell!}=\frac{k-\ell+1}{k+1}{k+\ell\choose \ell}={k+\ell\choose \ell}-{k+\ell\choose \ell-1}.$$
By the hook length formula, $C_{k,\ell}$ is the number of standard Young tableaux on shape $(k,\ell)$ and also the number of standard Young tableaux on $(k,\ell)'$.

\begin{theorem}~\label{ex:n}
    Let $s\geq t$ such that $s +t = d$, and let $\gamma=(s,t)'$ be the two columned tableau with columns of length $s$ and $t$. Then $Ns_\gamma$ is Lorentzian.
\end{theorem}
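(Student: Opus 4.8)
The plan is to apply the criterion in \cref{thm:sym lor char} directly to $f = Ns_\gamma$, using the combinatorics of ballot numbers to verify each of the three conditions (M), (D), and (H). First I would record the normalized monomial expansion of $Ns_\gamma$: since $\gamma = (s,t)'$ is a two-column shape, the SSYT of shape $\gamma$ are easy to enumerate, so each coefficient $c_\mu$ in $Ns_\gamma = \sum_\mu c_\mu \widetilde m_\mu$ can be written explicitly in terms of Kostka numbers $K_{\gamma,\mu}$ divided by $\mu!$. The key observation is that the $m$-support of $s_\gamma$ is the dominance interval $[1^d, (s,t)']$ (as noted in the excerpt for general Schur polynomials), so condition (M) is immediate, and condition (D) — that $c_\lambda \le c_\mu$ whenever $\mu \preceq \lambda$ — follows from the monotonicity of Kostka numbers in root directions (the excerpt already remarks that \cref{thm: dom implies less} recovers White's monotonicity result, but here we need the inequality in the other direction as a hypothesis check, so I would cite White's theorem \cite{white1980monotonicity} directly, or rederive the two-column case combinatorially).

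The substance of the proof is condition (H): for every partition $\mu \vdash d-2$ I must show the reduced $(\ell+1)\times(\ell+1)$ Haynsworth matrix $M(\mu)$ has at most one positive eigenvalue. Because $s_\gamma$ has degree at most $2$ in each variable — every column of a two-column tableau contributes at most one box per value — the only partitions $\mu \vdash d-2$ that contribute nonzero Hessians are those of the form $\mu = (2^a 1^b)$ with $2a + b = d-2$. For such $\mu$, the set partition of $[k]$ induced by $\mu$ has at most two blocks (one for the $2$'s, one for the $1$'s), so $M(\mu)$ is at most $3\times 3$. By \cref{prop:signature-from-minors}, checking that $M(\mu)$ has Lorentzian signature reduces to a bounded list of minor-sign inequalities: the diagonal entries are nonnegative (automatic), the $2\times 2$ principal minors must be $\le 0$, and the $3\times 3$ determinant (when present) must be $\ge 0$. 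Translating the entries $a_t, b_{s,t}, r_s, q$ of $M(\mu)$ back through the normalization, these become quadratic and cubic inequalities among consecutive ballot numbers $C_{k,\ell}$.

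The main obstacle will be establishing these ballot-number inequalities. Concretely, after clearing the factorial normalizations, the $2\times 2$ minor conditions become statements of the shape $C_{k,\ell-1} C_{k,\ell+1} \le C_{k,\ell}^2$ (log-concavity of ballot numbers along appropriate lines) together with mixed inequalities coming from the off-diagonal blocks, and the $3\times 3$ determinant gives one genuinely cubic inequality. I expect log-concavity of the ballot numbers to follow from the product formula $C_{k,\ell} = \frac{k-\ell+1}{k+1}\binom{k+\ell}{\ell}$ by a direct ratio computation, and the cubic determinantal inequality to be the hard case; the strategy there is to factor $\det M(\mu)$ using the explicit entries and reduce it to a product of manifestly nonnegative ballot-number differences, mirroring (but simplifying, via the Haynsworth reduction) the Schur-complement computation in \cite{qinthesis}. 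Once all these inequalities are verified — each reducing to an elementary manipulation of the closed form for $C_{k,\ell}$ — conditions (M), (D), (H) all hold, and \cref{thm:sym lor char} gives that $Ns_\gamma$ is a Lorentzian symmetric function, hence $s_\gamma$ is denormalized Lorentzian.
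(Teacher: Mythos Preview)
Your plan is essentially the same route the paper takes, and it will work, but you are overestimating the complexity of condition (H).  For $\mu=(2^p,1^q)$ the block of $M(\mu)$ indexed by the $2$'s is identically zero: any entry in that row or column comes from a coefficient $c_{[\mu+e_i+e_j]}$ with $\mu_i=2$, so the resulting partition has a part $\geq 3$ and falls outside $\supp_m(s_\gamma)=[1^d,\gamma]$.  Hence the $3\times 3$ matrix you anticipate collapses to the $2\times 2$ block
\[
\begin{pmatrix} q(q-1)\,a & q\,b \\ q\,b & c_1 \end{pmatrix},\qquad
a=C_{k-2,\ell-2},\ b=C_{k-1,\ell-1},\ c_1=C_{k,\ell},
\]
with $k=s-p$, $\ell=t-p$, $q=k+\ell-2$, and the ``hard cubic determinantal inequality'' you flag is just $0\ge 0$.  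The only nontrivial Hessian condition is therefore the single quadratic
\[
(k+\ell-3)\,C_{k-2,\ell-2}\,C_{k,\ell}\ \le\ (k+\ell-2)\,C_{k-1,\ell-1}^2,
\]
which is log-concavity of ballot numbers along the \emph{diagonal} $(k,\ell)\mapsto(k-1,\ell-1)$, not along a line of constant $k$ as you guessed; the paper checks it directly from the ratio $C_{k,\ell}/C_{k-1,\ell-1}=\frac{(k+\ell)(k+\ell-1)}{(k+1)\ell}$.

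One further simplification: the paper does not invoke White's monotonicity (or condition (D) in full) at all.  Working directly with the Hessian rather than through \cref{thm:sym lor char}, the only linear inequality required is the one arising as a linear eigenvalue of the $Y$-block, namely $c_2\le c_1$, which is $C_{k-1,\ell-1}\le C_{k,\ell}$ and follows immediately from the same ratio formula.  Your appeal to White is not wrong, but it imports more than is needed.
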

\begin{proof}
    Since Schur polynomials of two-column tableaux have degree at most two in each variable, it is sufficient to consider $\mu=(2^p,1^q,0^m)\in\NN^n$ where $\mu\vdash d-2.$ For such a partition $\mu$, define $\ell = t - p$ and $k = s - p$.
    
    Choose variables $z_1<\ldots<z_p<x_1<\ldots<x_q<y_1<\ldots<y_m$. For ease of notation, we will denote $X = \{x_i\}_{i=1}^q$, $Y = \{y_i\}_{i=1}^m$, and $Z = \{z_i\}_{i=1}^p$. By direct computation,
    \[\partial^\mu Ns_{(s,t)'}=a\sum_{x_i<x_j\in X}x_ix_j+b\sum_{x_i\in X\\
    y_j\in Y} x_iy_j+c_1\sum_{y_i< y_j\in Y}y_iy_j+c_2\sum_{y_i\in Y}\frac{y_i^2}{2}\]
    for some $a,b,c_1,c_2$.

    First, we find the inequalities necessary to show that $\nabla^2 \dl^\mu N s_\gamma$ has Lorentzian signature. By direct computation, the Hessian is of the form 
    $$H=\left( 
    \begin{array}{c|c} 
      0 & 0  \\ 
      \hline 
      0 & H'
    \end{array} 
    \right)\quad\text{with}\quad 
    H'=\left( 
    \begin{array}{c|c} 
      A & B \\ 
      \hline 
      B^T & D
    \end{array} 
    \right)$$
    where
    \begin{align*}
        A &= a(\mathbf{1}_{q\times q} - I)\\
        B &= b \cdot \mathbf{1}_{q \times m}\\
        D &= c_1 \cdot \mathbf{1}_{m \times m} + (c_2-c_1) I
    \end{align*}
    
    Using the techniques from \cref{subsec:reductions}, $H'$ has Lorentzian signature for all $n$ if and only if the linear eigenvalues $\lambda = -a$ and $\lambda = c_2-c_1$ are nonpositive and the remaining matrix
    \[\til{H} = \begin{pmatrix}
        q(q-1) a & q b\\
        q b & c_1
    \end{pmatrix}\]
    has Lorentzian signature. Therefore, in order to show that $\nabla^2 \dl^\mu N s_\lambda$ has Lorentzian signature, it just remains to show
    \begin{align}
        c_2 &\leq c_1 \label{eq:two-col-linear}\\
        (q-1)ac_1 &\leq qb^2. \label{eq:two-col-det}
    \end{align}
    
    We note that if $\ell = 0, 1$, then $q+2 > t$ and hence $[\mu + e_{x_i} + e_{x_j}] = (2^{q+2}, 1^{p-2}) \succ \gamma = (2^t, 1^s)$, so $a = 0$. Thus, if $\ell = 0,1$, then \labelcref{eq:two-col-det} holds automatically. If $\ell = 0$, then additionally $[\mu + e_{x_i} + e_{y_j}] = (2^{q+1},1^{p}) \succ \gamma$, so $b = 0$, in which case the former inequality holds as well. Thus, it just remains to verify that $c_1 \geq c_2$ whenever $\ell \geq 1$, and that $(q-1)ac_1 - qb^2 \leq 0$ whenever $\ell \geq 2$.

    To show this, we enumerate the coefficients. Note that $a$ is the number of semistandard fillings of $(s,t)'$ with content $\mu+e_{x_1}+e_{x_2}.$ Given any such filling $f$, it must satisfy the following: 
    \begin{itemize}
        \item The first $2\times p$ boxes contain exactly the $z_i's$,
        \item There are two $x_1$ boxes and two $x_2$ boxes, and these boxes must be next to each other and immediately below the $Z$ boxes, so the next two rows of $f$ must be a row of two $x_1$s and a row of two $x_2$s.
    \end{itemize}
    Therefore, the first $p+2$ rows of such a semistandard tableau are forced. Moreover, each of $x_3,\dots,x_q$ must appear at least once in the tableau. These $q-2$ boxes plus the $2(p+2)$ boxes from the first rows total to $2(p+2) + (q-2) = 2p + q + 2 = d$ boxes, so the remaining boxes of the tableau after the first $p+2$ rows are exactly a standard Young tableau of shape $(s-(p+2),t-(p+2))' = (k-2, \ell-2)'$. Therefore, by the hook length formula,
    \[a = C_{k-2,\ell-2}.\]
    
    To compute $b$, we compute the number of semistandard Young tableaux of type $\mu + e_{x_1} + e_{y_1}$. By the same logic as above, the first $p+1$ rows are forced to be filled with $(z_1,z_1),\dots,(z_p,z_p),(x_1,x_1)$, and each of $x_2,\dots,x_d$ must appear at least once, and $y_1$ must appear exactly once. This sums to $2(p+1) + (q-1) + 1 = d$, so the remaining part of the tableau is a standard Young tableau of shape $(s-(p+1), t-(p+1))' = (k-1,\ell-1)'$. Thus,
    \[b = C_{k-1,\ell-1}.\]
    
    By similar logic, $c_1$ is the number of semistandard Young tableaux of type $\mu + e_{y_1} + e_{y_2}$. Such a tableau must have the first $p$ rows filled with the $z_i$s, and the remaining boxes form a standard Young tableau of shape $(2^{s-p},1^{t-p})$, so
    \[c_1 = C_{k,\ell}.\]
    
    Finally, $c_2$ is the number of semistandard Young tableaux of shape $(s,t)'$ with content $\mu + 2 e_{y_2} = (2^{p}, 1^q,2)$, which, by symmetry of Schur polynomials, must be the same as the number of tableaux of type $\mu + e_{x_1} + e_{y_1} = (2^p, 2, 1^{q-1},1)$. Therefore, $c_2 = b$, so
    \[c_2 = C_{k-1,\ell-1}.\]
    
    By direct computation, for $\ell \geq 1$,
    \[C_{k,\ell}=\frac{(k+\ell)(k+\ell-1)}{(k+1)\ell}C_{k-1,\ell-1}.\]

    Since $k \geq \ell \geq 1$, we have
    \begin{align*}
        \frac{k+\ell - 1}{\ell} &\geq 1  & \text{and} && \frac{k+\ell}{k+1} & \geq 1,
    \end{align*}
    so $C_{k,\ell} \geq C_{k-1,\ell-1}$. Hence, $c_1 \geq c_2$ whenever $\ell \geq 1$, as desired.
    
    It remains to verify that $(q-1)ac_1 - qb^2 \leq 0$ whenever $\ell \geq 2$. By definition, $k = s - p$, $\ell = t - p$, $s + t = d$, and $2p + q = d - 2$, so $q = k + \ell - 2$. Thus, \labelcref{eq:two-col-det} can be equivalently written as
    \[(k+\ell-3)C_{k,\ell}C_{k-2,\ell-2} - (k+\ell -2)C_{k-1,\ell-1}^2 \leq 0.\]
    Since $k \geq \ell \geq 2$, then $C_{k-2,\ell-2},C_{k-1,\ell-1},C_{k,\ell}> 0$, so it suffices to show that
    \[\frac{k+\ell-2}{k+\ell-3}\frac{C_{k-1,\ell-1}^2}{C_{k,\ell} C_{k-2,\ell-2}} \geq 1.\]
    By the same computation as above, we have
    \begin{align*}
        \frac{C_{k-1,\ell-1}}{C_{k,\ell}} &= \frac{(k+1)\ell}{(k+\ell)(k+\ell-1)} & \frac{C_{k-1,\ell-1}}{C_{k-2,\ell-2}} &= \frac{(k+\ell -2)(k+\ell -3)}{k(\ell - 1)},
    \end{align*}
    so this reduces to showing that
    \begin{equation}\label{eq:reduced-two-col-hessian-cond}
        (k+1)\ell(k+\ell -2)^2 \geq k(\ell-1)(k+\ell)(k+\ell-1).
    \end{equation}
    
    Indeed, we note that
    \begin{equation}\label{eq:two-col-hessian-cond-pt1}
        (k+1)(k+\ell -2) = k(k+\ell-1) - k + (k+\ell-2) = k(k+\ell - 2) + \ell - 2 \geq k(k+\ell-2),
    \end{equation}
    where the final inequality comes from the assumption that $\ell \geq 2$.
    
    Similarly,
    \begin{equation}\label{eq:two-col-hessian-cond-pt2}
        \ell(k+\ell-2) = (\ell-1)(k+\ell) + (k + \ell) - 2\ell = (\ell-1)(k+\ell) + k - \ell \geq (\ell-1)(k+\ell).
    \end{equation}
    
    Multiplying the inequalities from \labelcref{eq:two-col-hessian-cond-pt1,eq:two-col-hessian-cond-pt2} gives the desired inequality in \labelcref{eq:reduced-two-col-hessian-cond}, which completes the proof.
\end{proof}

\section{Nonresults: Symmetric Function Operations and Lorentzianity}\label{sec:non-results}
One of the main breakthroughs in the study of Lorentzian symmetric polynomials is that normalized Schur polynomials are Lorentzian \cite{huh2022schurlorentzian}. In the study of symmetric functions, Schur polynomials behave particularly nicely under operations such as the $\omega$-involution and Hall inner product. Inspired by this, it is natural to ask whether this nice behavior extends to all symmetric denormalized Lorentzian polynomials, but unfortunately, the answer is no.

\subsection{$\omega$-involution is not Positivity Preserving}
The $\omega$-involution is an algebra endomorphism $\Lambda \to \Lambda$ defined by $\omega(e_\lambda) = h_\lambda$, where $e_\lambda$ and $h_\lambda$ are the elementary symmetric function and complete homogeneous symmetric function indexed by a partition $\lambda$, respectively. This map is an involution, and sends Schur polynomials to Schur polynomials.
\begin{theorem}[{\cite[Theorem 7.14.5]{stanley1999ec2}}]
    For every partition $\lambda$, we have $\omega(s_\lambda) = s_{\lambda'}$.
\end{theorem}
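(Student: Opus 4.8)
The plan is to deduce this identity from the two Jacobi--Trudi determinant formulas together with the elementary fact that $\omega$ is an involution interchanging $e_n$ and $h_n$. First I would verify that $\omega$ is well defined and involutive: since the $e_n$ are algebraically independent generators of $\Lambda$, the assignment $e_n\mapsto h_n$ extends uniquely to an algebra endomorphism $\omega$, and applying $\omega$ to the Newton-type relation $\sum_{i=0}^{n}(-1)^i e_i h_{n-i}=0$ (valid for every $n\ge 1$) and inducting on $n$ shows that $\omega(h_n)=e_n$; hence $\omega^2=\mathrm{id}$.

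Next I would apply $\omega$ to the classical Jacobi--Trudi identity. For a partition $\lambda$ with $\ell(\lambda)\le N$ one has $s_\lambda=\det\bigl(h_{\lambda_i-i+j}\bigr)_{1\le i,j\le N}$, where $h_0=1$ and $h_k=0$ for $k<0$. Since $\omega$ is a ring homomorphism it passes through the determinant expansion, so $\omega(s_\lambda)=\det\bigl(e_{\lambda_i-i+j}\bigr)_{1\le i,j\le N}$.

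It then remains to recognize the right-hand side as $s_{\lambda'}$. For this I would invoke the dual (Nägelsbach--Kostka) Jacobi--Trudi identity $s_\mu=\det\bigl(e_{\mu'_i-i+j}\bigr)_{1\le i,j\le M}$, valid whenever $M\ge\mu_1$. Applying this with $\mu=\lambda'$ and using $(\lambda')'=\lambda$ gives $s_{\lambda'}=\det\bigl(e_{\lambda_i-i+j}\bigr)_{1\le i,j\le M}$; taking $M=N$ is harmless because $\det\bigl(e_{\lambda_i-i+j}\bigr)_{1\le i,j\le N}$ is independent of $N$ for $N\ge\ell(\lambda)$ (the added diagonal block is triangular with ones on the diagonal). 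Hence $\omega(s_\lambda)=s_{\lambda'}$.

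The genuine content --- and the step I expect to be the main obstacle --- lies in the pair of Jacobi--Trudi identities themselves; granting them, the argument above is immediate. For a self-contained proof the cleanest route is the Lindström--Gessel--Viennot lemma, which writes each determinant as a signed sum over tuples of lattice paths and, after the standard sign-reversing involution on the intersecting tuples, as a positive sum over semistandard tableaux --- of shape $\lambda$ for the $h$-determinant and of the conjugate shape for the $e$-determinant. An alternative that sidesteps Jacobi--Trudi altogether is to expand $s_\lambda$ in the power-sum basis via the irreducible symmetric-group characters, use $\omega(p_\mu)=(-1)^{|\mu|-\ell(\mu)}p_\mu$, and identify the resulting character twist with the characters of $\lambda'$ through tensoring by the sign representation; this, however, merely trades the combinatorial input for a representation-theoretic one and is no shorter.
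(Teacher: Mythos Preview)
Your argument is correct and is essentially the standard proof that appears in Stanley's book. Note, however, that the paper does not give its own proof of this theorem: it is stated with a citation to \cite[Theorem 7.14.5]{stanley1999ec2} and used as a black box, so there is no in-paper proof to compare against. Your Jacobi--Trudi approach (apply $\omega$ to the $h$-determinant and recognize the resulting $e$-determinant via the dual Jacobi--Trudi formula) is exactly the route taken in the cited reference, so nothing further is needed here.
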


Since $N(s_\mu)$ is Lorentzian for all partitions $\mu$, and since $N \circ \omega \circ N\inv: N(s_\lambda) \mapsto N(s_{\lambda'})$, it is reasonable to conjecture that $N \circ \omega \circ N\inv$ might preserve Lorentzianity (or, equivalently, that $\omega$ might preserve denormalized Lorentzianity), but this is not the case.

\begin{proposition}
    The operator $N\circ \omega \circ N\inv$ is not Lorentzian-preserving. In fact, it does not preserve nonnegativity of coefficients when applied to Lorentzian polynomials.
\end{proposition}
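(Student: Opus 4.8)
The plan is to produce an explicit counterexample in degree three, where the full classification of Lorentzian symmetric cubics is already available from \cref{cor:cubic-lor-normalized-schur-expansion}. The starting observation is that, by linearity together with the identity $\omega(s_\lambda)=s_{\lambda'}$, the operator $N\circ\omega\circ N\inv$ acts on a normalized Schur expansion simply by transposing every indexing partition:
\[
    N\circ\omega\circ N\inv\left(\sum_\lambda a_\lambda Ns_\lambda\right)=\sum_\lambda a_\lambda Ns_{\lambda'}.
\]
In degree three this interchanges the $Ns_3$- and $Ns_{111}$-coefficients while fixing the $Ns_{21}$-coefficient. Since $Ns_3$ is the only degree-three normalized Schur function whose normalized monomial expansion contains $\til{m}_3$, feeding in a Lorentzian symmetric function whose $Ns_{111}$-coefficient is negative transports that negative number onto $Ns_3$ and hence forces the coefficient of $\til{m}_3$ in the output to be negative.

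Concretely, I would take $f=Ns_3+2Ns_{21}-Ns_{111}$, which is recorded immediately after \cref{cor:cubic-lor-normalized-schur-expansion} as a Lorentzian symmetric function; this is seen by checking the hypotheses of that corollary, or equivalently by noting that in the normalized monomial basis $f=\til{m}_3+3\til{m}_{21}+4\til{m}_{111}$, so that by \cref{thm:sym-cubic-lc-iff} each specialization $f_n$ is Lorentzian (the inequality $ab+(n-1)ac\leq nb^2$ reduces to $4n-1\leq 9n$). Applying the operator gives $N\circ\omega\circ N\inv(f)=-Ns_3+2Ns_{21}+Ns_{111}$, and expanding via $Ns_3=\til{m}_3+\til{m}_{21}+\til{m}_{111}$, $Ns_{21}=\til{m}_{21}+2\til{m}_{111}$, $Ns_{111}=\til{m}_{111}$ yields $-\til{m}_3+\til{m}_{21}+4\til{m}_{111}$. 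This has a negative coefficient, so it is not in $\RR_{\geq 0}[x_1,\dots]$ and therefore not Lorentzian by \cref{def:lorentzian} (and the same holds for every finite specialization); this simultaneously shows that $N\circ\omega\circ N\inv$ fails to preserve nonnegativity of coefficients and that it fails to preserve Lorentzianity.

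I do not expect a genuine obstacle here: the content is locating a small enough example, and the degree-three classification makes the verification mechanical. The one point deserving a moment of care is confirming that the input $f$ is a Lorentzian symmetric \emph{function}, i.e.\ that all specializations $f_n$ are Lorentzian rather than just one, which is handled by the remark above. It is also worth noting why this failure cannot be witnessed on (normalized) Schur-positive inputs: there $\omega$ merely permutes nonnegative coefficients, so the phenomenon genuinely reflects the fact that the Lorentzian cone strictly contains the normalized-Schur-positive cone.
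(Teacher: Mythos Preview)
Your proposal is correct. It follows the same overall strategy as the paper—produce a single explicit Lorentzian symmetric function on which $N\circ\omega\circ N^{-1}$ creates a negative coefficient—but with a different and somewhat cleaner witness. The paper works in degree four with $f=\til{m}_{211}+\til{m}_{1111}$, appeals to the quartic classification (\cref{thm:sym-lor-4}) to certify Lorentzianity, and then computes $N(\omega(N^{-1}(f)))$ directly. You instead work in degree three with the example $f=Ns_3+2Ns_{21}-Ns_{111}$ already flagged after \cref{cor:cubic-lor-normalized-schur-expansion}, and exploit the observation that $N\circ\omega\circ N^{-1}$ simply permutes normalized Schur coefficients by $\lambda\mapsto\lambda'$. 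This makes the failure mechanism transparent: any Lorentzian cubic with a negative $Ns_{111}$-coefficient must map to something with a negative $\til{m}_3$-coefficient. Your route is one degree lower, reuses an example the paper already verified, and explains \emph{why} the counterexample works rather than just computing it; the paper's version is slightly more self-contained in that it does not invoke the Schur-basis action explicitly. Either way the argument is complete.
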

\begin{proof}
    Consider the symmetric function $f = \til{m}_{211} + \til{m}_{1111}$. We have that $f$ is Lorentzian by the characterization of Lorentzian symmetric quartics in \cref{sec:symmetric-quartic}.

    However,
    \[N(\omega(N\inv(f))) = \til{m}_{1111} - \til{m}_{22} - \til{m}_{31} - 2\til{m}_4,\]
    so $N(\omega(N\inv(f)))$ does not have nonnegative coefficients and therefore cannot be Lorentzian.
\end{proof}

\subsection{$\langle -,Ns_\lambda\rangle$ is not Denormalized Lorentzian Preserving}

One tempting way to find a combinatorial proof that normalized Schur polynomials are Lorentzian is via an alternative form of the dual Cauchy identity. The following lemma is widely known in the symmetric functions community; for example, it is used in the original proof of the Lorentzianity of normalized Schur polynomials in \cite{huh2022schurlorentzian}, cited as \cite[Exercise 15.50]{fulton1991reptheory}. For the sake of completeness, we produce the proof below.
\begin{lemma}
    For all $m,n \in \ZZ_+$, we have
    \[\prod_{i=1}^n \prod_{j=1}^m (x_i+y_j) = \sum_{\substack{\lambda : \ell(\lambda) \leq n\\ \lambda_1 \leq m}} s_\lambda(x) s_{\til{\lambda'}}(y),\]
    where $\til{\lambda'}$ denotes the complementary partition to $\lambda'$ in the $n\times m$ rectangle, i.e. $\til{\lambda'}_k = n - \lambda'_{m+1-k}$.
\end{lemma}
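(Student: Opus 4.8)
The plan is to prove the identity by specializing the ordinary dual Cauchy identity and then doing a change of variables. Recall the dual Cauchy identity in the form
\[
\prod_{i=1}^n\prod_{j=1}^m(1+x_iy_j)=\sum_{\substack{\lambda:\ \ell(\lambda)\le n\\ \lambda_1\le m}} s_\lambda(x_1,\dots,x_n)\,s_{\lambda'}(y_1,\dots,y_m),
\]
where the sum is over partitions fitting inside the $n\times m$ rectangle, and $\lambda'$ is the conjugate partition (which then fits inside the $m\times n$ rectangle). This is the standard statement found in \cite{stanley1999ec2} or \cite{fulton1991reptheory}, and I would take it as given.

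First I would homogenize: the left side of our claimed identity, $\prod_{i=1}^n\prod_{j=1}^m(x_i+y_j)$, differs from $\prod_{i,j}(1+x_i y_j')$ under the substitution $y_j\mapsto 1/y_j$ followed by clearing denominators. More precisely, $\prod_{i,j}(x_i+y_j)=\bigl(\prod_j y_j\bigr)^n\prod_{i,j}\bigl(1+x_i y_j^{-1}\bigr)$, so applying the dual Cauchy identity with the $y_j$ replaced by $y_j^{-1}$ gives
\[
\prod_{i=1}^n\prod_{j=1}^m(x_i+y_j)=\Bigl(\textstyle\prod_{j=1}^m y_j\Bigr)^{\!n}\sum_{\substack{\lambda:\ \ell(\lambda)\le n\\ \lambda_1\le m}} s_\lambda(x)\,s_{\lambda'}(y_1^{-1},\dots,y_m^{-1}).
\]
The key step is then the rectangular-complementation identity for Schur polynomials in finitely many variables: for a partition $\nu$ with $\ell(\nu)\le m$ and $\nu_1\le n$,
\[
\bigl(\textstyle\prod_{j=1}^m y_j\bigr)^{n}\, s_\nu(y_1^{-1},\dots,y_m^{-1}) = s_{\widehat\nu}(y_1,\dots,y_m),
\]
where $\widehat\nu$ is the complement of $\nu$ inside the $m\times n$ rectangle, $\widehat\nu_k = n-\nu_{m+1-k}$. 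This follows directly from the bialternant (Jacobi–Trudi-by-determinants) formula $s_\nu(y)=\det(y_i^{\nu_j+m-j})/\det(y_i^{m-j})$: substituting $y_i\mapsto y_i^{-1}$ and multiplying each row by $y_i^{\,n+m-1}$ reverses the order of the columns and replaces the exponents $\nu_j+m-j$ by $n-\nu_{m+1-k}+k-1$, which is exactly the exponent pattern for $\widehat\nu$, up to a sign from the column reversal that cancels between numerator and denominator. Applying this with $\nu=\lambda'$ (so $\ell(\lambda')\le m$ and $\lambda'_1\le n$, and $\widehat{\lambda'}=\widetilde{\lambda'}$ in the notation of the statement) converts each summand $\bigl(\prod y_j\bigr)^n s_{\lambda'}(y^{-1})$ into $s_{\widetilde{\lambda'}}(y)$, giving the claimed identity.

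The main obstacle is getting the rectangular-complementation identity and its bookkeeping exactly right — tracking the conjugation $\lambda\mapsto\lambda'$, the complementation inside the rectangle, the reindexing $k\mapsto m+1-k$, and the sign from reversing the columns of the bialternant. None of these steps is deep, but it is easy to be off by a conjugate or a reversal, so I would state the complementation lemma cleanly as its own displayed equation, verify it on the level of alternants, and only then substitute it termwise into the specialized dual Cauchy identity. An alternative, if one prefers to avoid determinants, is to prove the complementation identity combinatorially via the bijection on semistandard tableaux that complements and rotates a tableau inside the rectangle; either route works, but the bialternant computation is the shortest.
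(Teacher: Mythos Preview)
Your proposal is correct and follows essentially the same route as the paper: specialize the dual Cauchy identity to finitely many variables, factor out $(\prod_j y_j)^n$ to pass from $\prod(1+x_iy_j^{-1})$ to $\prod(x_i+y_j)$, and then apply the rectangular-complementation identity for Schur polynomials, proved via the bialternant formula. The paper carries out exactly this argument, including the same bookkeeping of the column reversal and sign cancellation that you flag as the main point to get right.
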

\begin{proof}
    We recall the dual Cauchy identity \cite[Theorem 7.14.3]{stanley1999ec2}:
    \[\prod_{i,j} (1 + x_iy_j) = \sum_{\lambda} s_\lambda(x) s_{\lambda'}(y).\]
    By specializing $x_i = y_j = 0$ for all $i >n, j > m$, we have
    \begin{align*}
        \prod_{i=1}^n \prod_{j=1}^m (1 + x_i y_j) &= \sum_\lambda s_{\lambda}(x_1,\dots,x_n,0,\dots) s_{\lambda'}(y_1,\dots,y_m,0,\dots)\\
            &= \sum_{\substack{\lambda : \ell(\lambda) \leq n\\ \lambda_1 \leq m}} s_{\lambda}(x_1,\dots,x_n) s_{\lambda'}(y_1,\dots,y_m).
    \end{align*}
    For compactness, we will omit $\ell(\lambda) \leq n$ and $\lambda_1 \leq m$ from the sum going forward.
    
    Using the above identity, we find
    \begin{align*}
        \prod_{i=1}^n \prod_{j=1}^m(x_i + y_j) &= \prod_{i=1}^n\prod_{j=1}^m y_j(1 + x_i y_j\inv)\\
        &=\left(\prod_{j=1}^m y_j^n\right)\prod_{i=1}^n\prod_{j=1}^m(1 + x_i y_j\inv)\\
        &= \left(\prod_{j=1}^m y_j^n\right)\sum_{\lambda}s_\lambda(x_1,\dots,x_n) s_{\lambda'}(y_1\inv, \dots, y_m\inv)\\
        &= \sum_{\lambda} s_\lambda(x_1,\dots,x_n)\left(y_1^n\dots y_m^n s_{\lambda'}(y_1\inv,\dots,y_m\inv)\right).
    \end{align*}

    Thus, in order to prove the desired identity, it just remains to show that for any $\lambda$ with $\ell(\lambda) \leq m$ and $\lambda_1 \leq n$, we have
    \[s_{\til{\lambda}}(y_1,\dots,y_m) = y_1^n\dots y_m^n s_\lambda(y_1\inv,\dots,y_m\inv),\]
    where $\til{\lambda}$ is the partition defined by $\til{\lambda}_k = n - \lambda_{m+1-k}$.

    Indeed, we recall the bialternant formula \cite[Theorem 7.15.1]{stanley1999ec2}:
    \[s_\lambda(x_1,\dots,x_n) = \frac{\det\left(x_i^{\lambda_j + n - j}\right)_{1\leq i,j \leq n}}{\det\left(x_i^{n-j}\right)_{1\leq i,j\leq n}}.\]
    Using this formula, we have
    \begin{align*}
        y_1^n\dots y_m^n s_\lambda(y_1\inv,\dots,y_m\inv) &= \frac{y_1^{n+m-1}\dots y_m^{n+m-1}\det\left(y_i^{-\lambda_j - m + j}\right)_{1 \leq i,j \leq m}}{y_1^{m-1}\dots y_m^{m-1}\det\left(y_i^{-n + j}\right)_{1 \leq i,j \leq m}}\\
            &= \frac{\det\left(\left(y_i^{-\lambda_j - m + j}\right)_{1 \leq i,j\leq m}\diag\left(y_i^{n+m-1}\right)\right)}{\det\left(\left(y_i^{- m + j}\right)_{1 \leq i,j\leq m}\diag\left(y_i^{m-1}\right)\right)}\\
            &= \frac{\det\left(y_i^{n-\lambda_j + j - 1}\right)}{\det\left(y_i^{j-1}\right)} = \frac{\det\left(y_i^{\til{\lambda}_{m+1-j} + m - (m+1-j)}\right)}{\det\left(y_i^{m-(m+1-j)}\right)}.
    \end{align*}
    Let $\sigma$ be the permutation on $[m]$ that transposes $j$ with $m+1-j$ for all $j$, and let $P_\sigma$ be its permutation matrix. Then, by construction, $\left(y_i^{\til{\lambda}_{m+1-j} + m - (m+1-j)}\right) = \left(y_i^{\til{\lambda}_j + m - j}\right) \cdot P_\sigma$ and similarly $\left(y_i^{m-(m+1-j)}\right)_{1\leq i,j \leq m} = \left(y_i^{m-j}\right)\cdot P_\sigma$. Plugging this into the equation above, we get
    \begin{align*}
        y_1^n\dots y_m^n s_\lambda(y_1\inv,\dots,y_m\inv) &= \frac{\det\left(y_i^{n-\lambda_j + j - 1}\right)}{\det\left(y_i^{j-1}\right)} = \frac{\det\left(y_i^{\til{\lambda}_{m+1-j} + m - (m+1-j)}\right)}{\det\left(y_i^{m-(m+1-j)}\right)}\\
            &= \frac{\det\left(\left(y_i^{\til{\lambda}_j + m - j}\right) \cdot P_\sigma\right)}{\det\left(\left(y_i^{m-j}\right)\cdot P_\sigma\right)}\\
            &= \frac{\det\left(y_i^{\til{\lambda}_j + m - j}\right)_{1\leq i,j\leq m}}{\det\left(y_i^{m-j}\right)_{1 \leq i,j \leq m}}\\
            &= s_{\til{\lambda}}(y_1,\dots,y_m),
    \end{align*}
    which completes the proof.
\end{proof}

Using the fact that Schur polynomials are an orthonormal basis with respect to the Hall inner product, this gives us a tantalizing approach towards proving that Schur polynomials are denormalized Lorentzian. We note that $\prod_{i,j} (x_i + y_j)$ is a product of linear forms, so it is real stable and therefore Lorentzian \cite[Proposition 2.2]{BrandenHuh20Lorentzian}. Moreover, by the orthonormality of Schur polynomials, we have
\[s_\lambda(x) = \left\langle \prod_{i,j} (x_i + y_j), s_{\til{\lambda'}}(y)\right\rangle,\]
so if $N \circ \langle -, s_\lambda(y)\rangle: \Lambda(x,y) \to \Lambda(x)$ were to preserve Lorentzianity, this would be sufficient to prove that Schur polynomials are denormalized Lorentzian.

Unfortunately, $\langle -, s_\lambda(x)\rangle$ does not, in general, send Lorentzian polynomials to denormalized Lorentzian polynomials.

\begin{example}
    Consider
    \begin{multline*}
        f = \frac{1}{2}m_{211}(\vec{x})m_1(\vec{y}) + m_{1111}(\vec{x})m_1(\vec{y})\\
            +\frac{1}{2}m_{21}(\vec{x})m_{11}(\vec{y}) + \frac{1}{2}m_{111}(\vec{x})m_2(\vec{y}) + m_{111}(\vec{x})m_{11}(\vec{y})\\
            + \frac{1}{2}m_{11}(\vec{x})m_{21}(\vec{y}) + m_{11}(\vec{x})m_{111}(\vec{y}).
    \end{multline*}

    Since $\langle m_{21}, s_{111}\rangle = -2$ and $\langle m_{111},s_{111}\rangle = 1$, 
        \[\langle f, s_{111}(\vec{x})\rangle = -2\left(\frac{1}{2}m_{11}(\vec{y})\right) + \frac{1}{2}m_2(\vec{y}) + m_{11}(\vec{y}) = \frac{1}{2}m_2(\vec{y}),\]
    so $\langle f, s_{111}(\vec{x})\rangle$ is not denormalized Lorentzian.

    However, $f$ is Lorentzian. The quadratic derivatives of $f$ that are not identically zero are as follows. Note that we abuse notation slightly by writing $m_{\lambda}$ for the symmetric polynomial in all the non-differentiated variables.
    \begin{align*}
        \dl_{x_1}^2\dl_{x_2} f &= m_1(\vec{x})m_1(\vec{y}) + m_{11}(\vec{y})\\
        \dl_{x_1}\dl_{x_2}\dl_{x_3} f &= (x_1 + x_2 + x_3)m_1(\vec{y}) + m_1(\vec{x})m_1(\vec{y}) + \frac{1}{2}m_2(\vec{y}) + m_{11}(\vec{y})\\
        \dl_{x_1}^2\dl_{y_1}f &= m_{11}(\vec{x}) + m_1(\vec{x})m_1(\vec{y})\\
        \dl_{x_1}\dl_{x_2}\dl_{y_1} &= \frac{1}{2}m_2(\vec{x}) + (x_1 + x_2)m_1(\vec{x}) + m_{11}(\vec{x})\\
            &\qquad + (x_1 + x_2)m_1(\vec{y}) + m_1(\vec{x})y_1 + m_1(\vec{x})m_1(\vec{y})\\
            &\qquad + y_1 m_1(\vec{y}) + \frac{1}{2}m_2(\vec{y}) + m_{11}(\vec{y})\\
        \dl_{x_1}\dl_{y_1}^2 f &= m_{11}(\vec{x}) + m_1(\vec{x})m_1(\vec{y})\\
        \dl_{x_1}\dl_{y_1}\dl_{y_2} f &= x_1m_1(\vec{x}) + \frac{1}{2}m_2(\vec{x}) + m_{11}(\vec{x}) + m_1(\vec{x})(y_1 + y_2) + m_1(\vec{x})m_1(\vec{y}).
    \end{align*}
    
    One can check that all of these derivatives have Lorentzian signature.
\end{example}

The fact that $\langle \prod (x_i + y_j), s_\lambda(y)\rangle$ is denormalized Lorentzian seems to be more a coincidence than a mathematically meaningful fact. A fundamental limitation of this inner product as an operator is that the output only depends on the part of $f$ that is homogeneous of degree $|\lambda|$ in the $y$ variables and degree $m+n-|\lambda|$ in the $x$ variables. We know that initial forms of Lorentzian polynomials are Lorentzian, so if we take only the terms of $f$ with maximal (resp. minimal) degree in $y$, then the result is Lorentzian, but in general there are no guarantees on how the intermediate degree parts of $f$ must behave. As such, any operator that only depends on one graded piece of $f$ should not be Lorentzianity-preserving, and as we showed above, restricting ourselves to symmetric polynomials does not fix this problem. Restricting to stable polynomials as our input is also likely insufficient to guarantee Lorentzianity of the output, as stability again only gives guarantees on the top- and bottom- degree parts.

\section{Conclusion and Future Directions}
This paper explores the intersection of Lorentzian polynomials and symmetric functions. We give some general theorems on their topology and structure, and we exploit this structure to give complete characterizations in small degree and to simplify some known proofs of Lorentzianity. While we show that many natural operations on symmetric functions do not interact nicely with the Lorentzian property, there are still several interesting directions to explore.

One future direction might be to explore the Lorentzianity of chromatic symmetric polynomials, following on the work in \cite{Matherne24chromaticsymmetric}, and in particular to try to characterize which graphs have Lorentzian chromatic symmetric functions. By analyzing the support conditions, we know that the graph must be claw-free. From \cite{Matherne24chromaticsymmetric}, we know that abelian Dyck paths give Lorentzian chromatic symmetric functions, but that not all non-abelian Dyck paths have this property.

From a slightly different direction, we know that the chromatic polynomial of any graph has log-concave coefficients~\cite[Theorem 9.9 (4)]{AdiprasitoHuhKatz2018}. Historically, the chromatic symmetric function has been regarded as the natural multivariate lift of the chromatic polynomial. Since not all chromatic symmetric functions are Lorentzian, it is natural to ask if there is another multivariate lift that does preserve the Lorentzian property.

It is also natural to study certain subtypes of Lorentzian symmetric polynomials, such as M-convex functions. As shown in \cite{BrandenHuh20Lorentzian}, M-convex functions give rise to a special, restrictive class of Lorentzian polynomials. In future work, we are looking to study the intersection of this class with symmetric polynomials. M-convexity imposes further restrictions on the coefficients of the polynomial, which should have interesting consequences when combined with the symmetry restrictions.

Last but not least, a sensible direction is to continue studying the obvious stratification of the projective space of Lorentzian symmetric polynomials in the same spirit as recent work of \citeauthor*{baker2025lorentzianpolynomialsmatroidstriangular} \cite{baker2025lorentzianpolynomialsmatroidstriangular}. Answering general questions like computing the dimension and other topological invariants of each piece would be a natural next step.

\bibliographystyle{plainnat}
\bibliography{ref}
\end{document}